\numberwithin{equation}{section}
\def\Ext{\mbox{\rm Ext}\,} \def\Hom{\mbox{\rm Hom}} \def\dim{\mbox{\rm dim}\,} 
\def\lr#1{\langle #1\rangle}    \def\mod{\mbox{\rm \textbf{mod}}\,}\def\top{\mbox{\rm top}\,}
\def\Ker{\mbox{\rm Ker}\,}   \def\im{\mbox{\rm Im}\,} \def\Coker{\mbox{\rm Coker}\,}
\def\End{\mbox{\rm End}\,}\def\tw{\mbox{\rm tw}\,}
\def\Deg{\mbox{\rm \textbf{deg}}\,}
\def\ad{\mbox{\rm ad}\,}
\def\Aut{\mbox{\rm Aut}\,}\def\Dim{\mbox{\rm \textbf{dim}}\,}\def\A{\mathcal{A}\,} \def\H{\mathcal{H}\,}
\theoremstyle{plain} 
\newtheorem{theorem}{\bf Theorem}[section]
\newtheorem{lemma}[theorem]{\bf Lemma}
\newtheorem{corollary}[theorem]{\bf Corollary}
\newtheorem{proposition}[theorem]{\bf Proposition}
\theoremstyle{definition} 
\newtheorem{definition}[theorem]{\bf Definition}
\newtheorem{remark}[theorem]{\bf Remark}
\newtheorem{example}[theorem]{\bf Example}
\newcommand{\bt}{\begin{theorem}}
\newcommand{\et}{\end{theorem}}
\newcommand{\bl}{\begin{lemma}}
\newcommand{\el}{\end{lemma}}
\newcommand{\bd}{\begin{definition}}
\newcommand{\ed}{\end{definition}}
\newcommand{\bc}{\begin{corollary}}
\newcommand{\ec}{\end{corollary}}
\newcommand{\bp}{\begin{proof}}
\newcommand{\ep}{\end{proof}}
\newcommand{\bx}{\begin{example}}
\newcommand{\ex}{\end{example}}
\newcommand{\br}{\begin{remark}}
\newcommand{\er}{\end{remark}}
\newcommand{\be}{\begin{equation}}
\newcommand{\ee}{\end{equation}}
\newcommand{\ba}{\begin{align}}
\newcommand{\ea}{\end{align}}
\newcommand{\bn}{\begin{enumerate}}
\newcommand{\en}{\end{enumerate}}
\newcommand{\bcs}{\begin{cases}}
\newcommand{\ecs}{\end{cases}}
\renewcommand{\section}{\@startsection{section}{1}{0mm}
  {-\baselineskip}{0.5\baselineskip}{\bf\leftline}}
\begin{document}

\title[Minimal generators of Hall algebras]{Minimal generators of Hall algebras of\\ 1-cyclic perfect complexes} 

\author[Haicheng Zhang]{{Haicheng Zhang}} 


\subjclass[2010]{ 
16G20, 17B20, 17B37.
}
%
\keywords{ 
Minimal generators; Hall algebras; 1-cyclic complexes.
}
\address{
Institute of Mathematics, School of Mathematical Sciences, Nanjing Normal University,
Nanjing 210023, P. R. China.\endgraf
}
\email{zhanghc@njnu.edu.cn}


\begin{abstract}
Let $A$ be the path algebra of a Dynkin quiver over a finite field, and let $C_1(\mathscr{P})$ be the category of 1-cyclic complexes of projective $A$-modules. In the present paper, we give a PBW-basis and a minimal set of generators for the Hall algebra $\H(C_1(\mathscr{P}))$ of $C_1(\mathscr{P})$. Using this PBW-basis, we firstly prove the degenerate Hall algebra of $C_1(\mathscr{P})$ is the universal enveloping algebra of the Lie algebra spanned by all indecomposable objects. Secondly, we calculate the relations in the generators in $\H(C_1(\mathscr{P}))$, and obtain quantum Serre relations in a quotient of certain twisted version of $\H(C_1(\mathscr{P}))$. Moreover, we establish relations between the degenerate Hall algebra, twisted Hall algebra of $A$ and those of $C_1(\mathscr{P})$, respectively.
\end{abstract}

\maketitle

\section{Introduction}
Let $A$ be always a finite dimensional hereditary algebra over a finite field. In what follows, all modules are assumed to be finite dimensional. In 2011, Bridgeland \cite{Br} considered the Hall algebra of 2-cyclic complexes of projective $A$-modules. By taking some localization and reduction, he achieved an algebra called the Bridgeland Hall algebra of $A$. He proved that the quantum enveloping algebra associated to $A$ is embedded into the Bridgeland Hall algebra of $A$. This provides a beautiful realization of the full quantum enveloping algebra by Hall algebras. Bridgeland \cite{Br} made a statement without proofs that the Bridgeland Hall algebra of $A$ is isomorphic to the Drinfeld double of its extended Ringel--Hall algebra, which is later proved by Yanagida in \cite{Yan} (see also \cite{ZHC}). Inspired by Bridgeland's work, for each positive integer $m\geq2$, Chen and Deng \cite{ChenD} considered the Hall algebra of $m$-cyclic complexes of projective $A$-modules. For a representation-finite hereditary algebra $A$ they proved the existence of Hall polynomials in the category of $m$-cyclic complexes of projective $A$-modules; using the Hall polynomials, they defined the generic Bridgeland Hall algebra of $2$-cyclic complexes, and showed that it contains a subalgebra isomorphic to the integral form of the quantum enveloping algebra associated to $A$; in particular, for the degenerate case, this provides a realization of the simple Lie algebra associated to $A$. For $m>2$, the algebra structure of the Bridgeland Hall algebra $DH_m(A)$ of $m$-cyclic complexes of projective $A$-modules has a characterization in \cite{ZHC2}, in particular, it is proved that there exist Heisenberg double structures in $DH_m(A)$.

On the other hand, the most difficult case is the $1$-cyclic complex case. Let $C_1(\mathscr{P})$ be the category of $1$-cyclic complexes of projective $A$-modules, and for each $A$-module $M$ the corresponding $1$-cyclic complex of projective $A$-modules is denoted by $C_M$. Then for any two $A$-modules $M, N$, $$\Ext_{C_1(\mathscr{P})}^1(C_M,C_N)\cong \Hom_A(M,N)\oplus\Ext_A^1(M,N)~(\text{see~Lemma~\ref{kuozhang}}).$$ That is, the exact structure of $C_1(\mathscr{P})$ is more complicated than that of the $A$-module category. Actually, let $A=kQ$ be the path algebra of a finite acyclic quiver $Q$, and let $\Lambda=k[x]/[x^2]$ be the algebra of dual numbers, then the category of $1$-cyclic complexes of $A$-modules is equivalent to the category of modules over the path algebra $\Lambda Q$, and $C_1(\mathscr{P})$ is exactly the category of Gorenstein projective $\Lambda Q$-modules (cf. \cite{RZ}). Ringel and Zhang pointed out in \cite{RZ} that ``The class of 1-Gorenstein algebras is a class of algebras which includes both the hereditary and the self-injective algebras---two classes of algebras whose representations have been investigated very thoroughly and have been shown to be strongly related to Lie theory". Thus, one might hope that all the 1-Gorenstein algebras should have a close relation with Lie theory. As a representative of the class of 1-Gorenstein algebras, the path algebra $\Lambda Q$ is $1$-Gorenstein. Ringel and Zhang \cite{RZ} have showed that the Kac theorem yields a correspondence between the isoclasses (isomorphism classes) of indecomposable objects in the stable category of $C_1(\mathscr{P})$ and the positive roots of the Kac--Moody algebra associated to $A$.

Inspired by the above correspondence given by Ringel and Zhang, for a hereditary algebra $A$ of Dynkin type, some research on the Lie theory of $C_1(\mathscr{P})$ has been undertaken in \cite{RSZ}. They gave a minimal set of generators for the Lie algebra $\tilde{\mathfrak{n}}^+$ spanned by the isoclasses of indecomposable non-projective objects in $C_1(\mathscr{P})$, and calculated some relations in these generators.
In particular, for a bipartite quiver, the Lie algebra $\tilde{\mathfrak{n}}^+$ provides a realization of the positive part of the corresponding simple Lie algebra; for a linearly oriented quiver of type $\mathbb{A}$, it provides a realization of free $2$-step nilpotent Lie algebra. Moreover, for all the quivers of type $\mathbb{A}$, the generators and generating relations for $\tilde{\mathfrak{n}}^+$ have been determined. This achieves the desire that $C_1(\mathscr{P})$ should be related to Lie theory.

As mentioned above, compared with the $A$-module category, the exact structure of $C_1(\mathscr{P})$ is ``twisted" severely. In other words, the algebra structure of the Hall algebra of $C_1(\mathscr{P})$ becomes more complicated. In the present paper, we will give some characterizations on the Hall algebra $\H(C_1(\mathscr{P}))$ of $C_1(\mathscr{P})$. Explicitly, we give a PBW-basis for the Hall algebra of $C_1(\mathscr{P})$. Then for a hereditary algebra of Dynkin type, using this PBW-basis, we firstly prove the degenerate Hall algebra of $C_1(\mathscr{P})$ is the universal enveloping algebra of the Lie algebra spanned by all isoclasses of indecomposable objects; secondly, we give a minimal set of generators for $\H(C_1(\mathscr{P}))$, calculate the relations in these generators in $\H(C_1(\mathscr{P}))$, and obtain some fundamental relations in a quotient of $\H(C_1(\mathscr{P}))$. In order to get quantum Serre relations, we define a twisted version of $\H(C_1(\mathscr{P}))$. Moreover, we establish relations between the degenerate Hall algebra, twisted Hall algebra of $A$ and those of $C_1(\mathscr{P})$, respectively.

Let us fix some notations used throughout the paper. $k=F_q$ is always a finite field with
$q$ elements, $v=\sqrt{q}$, and $Q(v)$ is the rational function field of $v$. Let $Q$ be a finite acyclic quiver with $n$ vertices, and $A$ be the path algebra of $Q$ over $k$. Denote by $\mod A$ the category of finite dimensional (left) $A$-modules, and by $\mathscr{P}\subset\mod A$ the subcategory of projective $A$-modules. The bounded derived category and Grothendieck group of $\mod A$ are denoted by $D^b(A)$ and $K(A)$, respectively; and for any $M\in\mod A$ we denote by $\hat{M}$ the image of $M$ in $K(A)$. For each vertex $i$ of $Q$, we denote by $S_i$ the corresponding simple $A$-module, and by $P_i$ the projective cover of $S_i$. For an $A$-module $M$ and a positive integer $m$, $mM$ stands for the direct sum of $m$ copies of $M$. We denote by $[X]$ the isoclass of an object $X$ in an additive category. For a finite set $S$, we denote by $|S|$ its cardinality. For a Lie algebra $\mathfrak{g}$, we denote by $U(\mathfrak{g})$ its universal enveloping algebra (refer to \cite{Hum} for Lie theory).

\section{Preliminaries}
In this section, we collect some definitions and properties of $1$-cyclic complexes and Hall algebras.

\subsection{$1$-cyclic complexes}
A \emph{1-cyclic complex} of $A$-modules is by definition a pair $M^\cdot=(M,d)$ where $M$ is an $A$-module and $d$ is an endomorphism of $M$ satisfying $d^2=0$. Let $(M,d)$ and $(M',d')$ be two $1$-cyclic complexes of $A$-modules, a \emph{morphism} $f:(M,d)\rightarrow (M',d')$ is given by a homomorphism $f:M\rightarrow M'$ of $A$-modules such that $d'f=fd$. Two morphisms $f, g:(M,d)\rightarrow (M',d')$ are said to be \emph{homotopic} provided there exists a homomorphism $s:M\rightarrow M'$ of $A$-modules such that $f-g=sd+d's$. For each 1-cyclic complex $M^\cdot=(M,d)$ of $A$-modules, its homology $H_0(M^\cdot):=\Ker d/ \im d$. We denote by $C_1(\mod A)$ the category of $1$-cyclic complexes of $A$-modules. Let $C_1(\mathscr{P})$ be the subcategory of $C_1(\mod A)$ consisting of 1-cyclic complexes of projective $A$-modules, and denote by ${K}_1(\mathscr{P})$ the homotopy category obtained from $C_1(\mathscr{P})$ by identifying homotopic morphisms.
It is similar to the ordinary bounded complexes that we have a shift functor
$[1]:C_1(\mod A)\longrightarrow C_1(\mod A)$ defined by $M^\cdot[1]:=(M,-d)$, where $M^\cdot=(M,d)$. It is well to be reminded that $C_1(\mathscr{P})$ is a Frobenius exact category, whose stable category coincides with the homotopy category ${K}_1(\mathscr{P})$.

The following lemma is significant to the calculation of extension groups in $C_1(\mathscr{P})$ (cf. \cite{Gor2,ChenD,Zhao}).
\bl\label{Ext} If $X^\cdot, Y^\cdot\in C_1(\mathscr{P})$, then $$\Ext_{C_1(\mathscr{P})}^1(X^\cdot,Y^\cdot)\cong
\Hom_{K_1(\mathscr{P})}(X^\cdot, Y^\cdot[1]).$$
\el
Given a morphism
$f: \Omega\rightarrow P$ of projective $A$-modules, one defines a $1$-cyclic complex
$$C_f={\left(P\oplus \Omega,\begin{pmatrix}0&f\\0&0\end{pmatrix}\right)}\in C_1(\mathscr{P}).$$
Hence, for each projective $A$-module $P$, we have a $1$-cyclic complex $K_P:=C_{{\rm Id}_P}\in C_1(\mathscr{P})$. For each $A$-module $M$, we fix a minimal projective resolution of $M$:
\begin{equation}\label{mini proj res}
0 \longrightarrow \Omega_M \stackrel{\delta_M}{\longrightarrow} P_M \stackrel{\epsilon_M}{\longrightarrow} M \longrightarrow 0.
\end{equation} Then we set $C_M:=C_{\delta_M}$. Since the minimal projective resolution is unique up to isomorphism, $C_M$ is well-defined up to isomorphism.

The following lemma gives characterizations of the classification of indecomposable objects in $C_1(\mathscr{P})$ and the structure of its homotopy category.
\bl\rm{(\cite[Theorem 1]{RZ})}\label{indec. obj.s}

$(1)$ The objects $C_M$ and $K_P$, where $M$ is an indecomposable $A$-module, and $P$ is an indecomposable projective $A$-module, provide a complete set of indecomposable objects in $C_1(\mathscr{P})$. Moreover, all $K_P$ are exactly the whole indecomposable projective-injective objects in $C_1(\mathscr{P})$.

$(2)$ The homotopy category $K_1(\mathscr{P})$ is equivalent to the orbit category $D^b(A)/[1]$ as triangulated categories.
\el

Combining Lemma \ref{Ext} with Lemma \ref{indec. obj.s}, we obtain the following
\begin{lemma}\label{kuozhang}
For any $X^\cdot, Y^\cdot\in C_1(\mathscr{P})$,
$$\Ext^1_{C_1(\mathscr{P})}(X^\cdot,Y^\cdot)\cong\Hom_A(H_0(X^\cdot),H_0(Y^\cdot))\oplus\Ext^1_{A}(H_0(X^\cdot),H_0(Y^\cdot)).$$
\end{lemma}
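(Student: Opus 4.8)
The strategy is dictated by the way the lemma is positioned: combine Lemma \ref{Ext}, which turns $\Ext^1$ in $C_1(\mathscr{P})$ into a morphism space in the homotopy category $K_1(\mathscr{P})$, with Lemma \ref{indec. obj.s}, which on the one hand lists the indecomposables of $C_1(\mathscr{P})$ and on the other hand identifies $K_1(\mathscr{P})$ with the orbit category $D^b(A)/[1]$. Since $C_1(\mathscr{P})$ has the Krull--Schmidt property and both sides of the asserted isomorphism are additive in each of $X^\cdot$ and $Y^\cdot$ (the functor $H_0(-)$ commutes with finite direct sums, and $\Ext^1$, $\Hom_A$, $\Ext^1_A$ are biadditive), it is enough to prove the isomorphism when $X^\cdot$ and $Y^\cdot$ each run through the indecomposables of Lemma \ref{indec. obj.s}(1): the complexes $C_M$ for $M$ an indecomposable $A$-module, and the complexes $K_P$ for $P$ an indecomposable projective $A$-module.

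The cases in which $X^\cdot$ or $Y^\cdot$ is some $K_P$ are immediate: by Lemma \ref{indec. obj.s}(1) each $K_P$ is projective-injective in the Frobenius category $C_1(\mathscr{P})$, so $\Ext^1_{C_1(\mathscr{P})}(K_P,Y^\cdot)=\Ext^1_{C_1(\mathscr{P})}(X^\cdot,K_P)=0$, while $H_0(K_P)=H_0(C_{{\rm Id}_P})=P/\im {\rm Id}_P=0$, so the right-hand side vanishes too. It remains to treat $X^\cdot=C_M$, $Y^\cdot=C_N$ with $M,N$ indecomposable. Directly from the definition $C_M=C_{\delta_M}$ one computes $H_0(C_M)\cong M$ (the kernel of its differential is the summand $P_M$ and its image is $\im\delta_M$), so the target of the desired isomorphism is $\Hom_A(M,N)\oplus\Ext^1_A(M,N)$. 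For the source, Lemma \ref{Ext} gives $\Ext^1_{C_1(\mathscr{P})}(C_M,C_N)\cong\Hom_{K_1(\mathscr{P})}(C_M,C_N[1])$. Using the equivalence $K_1(\mathscr{P})\simeq D^b(A)/[1]$ of Lemma \ref{indec. obj.s}(2) — which can be realized by folding a bounded complex of projectives into a $1$-cyclic one, sending the stalk complex $M$ to $C_M$ and contractible complexes (in particular $K_P=C_{{\rm Id}_P}$, the fold of $P\stackrel{{\rm Id}}{\longrightarrow}P$) to $0$ — together with the defining formula for morphisms in an orbit category, this becomes
$$\Hom_{K_1(\mathscr{P})}(C_M,C_N[1])\cong\Hom_{D^b(A)/[1]}(M,N[1])\cong\bigoplus_{i\in\mathbb{Z}}\Hom_{D^b(A)}(M,N[i]).$$
Because $A$ is hereditary, $\Hom_{D^b(A)}(M,N[i])\cong\Ext^i_A(M,N)$ is zero unless $i\in\{0,1\}$ and equals $\Hom_A(M,N)$ for $i=0$, $\Ext^1_A(M,N)$ for $i=1$; hence the direct sum is exactly $\Hom_A(M,N)\oplus\Ext^1_A(M,N)$. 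Reassembling over the indecomposable summands of general $X^\cdot,Y^\cdot$, and using $H_0\big(\bigoplus_iC_{M_i}\oplus\bigoplus_kK_{P_k}\big)=\bigoplus_iM_i$, yields the statement in general.

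I expect the only delicate point to be the precise identification of objects across the equivalence $K_1(\mathscr{P})\simeq D^b(A)/[1]$ — namely that $C_M$ corresponds to the stalk complex $M$ and that $K_P$ becomes $0$ — after which everything is formal orbit-category bookkeeping plus the hereditariness of $A$. One should also note that the displayed direct sum over $\mathbb{Z}$ is finite: a bounded complex over a hereditary algebra is a finite direct sum of stalk complexes, and morphisms between stalk complexes are concentrated in two consecutive degrees, so only $i=0,1$ can contribute. All the remaining verifications — biadditivity, the vanishing of $\Ext^1$ against the $K_P$, and the computation $H_0(C_M)\cong M$ — are routine.
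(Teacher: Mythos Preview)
Your proof is correct and follows essentially the same route as the paper: reduce via Lemma~\ref{indec. obj.s}(1) to the case $X^\cdot=C_M$, $Y^\cdot=C_N$ (the $K_P$ summands contributing nothing on either side), apply Lemma~\ref{Ext} to pass to $K_1(\mathscr{P})$, use the equivalence $K_1(\mathscr{P})\simeq D^b(A)/[1]$ from Lemma~\ref{indec. obj.s}(2), expand the orbit-category Hom as $\bigoplus_{i\in\mathbb{Z}}\Hom_{D^b(A)}(M,N[i])$, and invoke hereditariness. The paper writes the decomposition $X^\cdot=C_M\oplus K_P$ with $M$ an arbitrary module rather than reducing all the way to indecomposables, but this is a cosmetic difference only.
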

\begin{proof}
By Lemma \ref{indec. obj.s}, we write $X^\cdot=C_M\oplus K_P$ and $Y^\cdot=C_N\oplus K_\Omega$ for some $M,N\in\mod A$, and $P,\Omega\in\mathscr{P}$. Then
\begin{equation*}\begin{split}
\Ext^1_{C_1(\mathscr{P})}(X^\cdot,Y^\cdot)&=\Ext^1_{C_1(\mathscr{P})}(C_M\oplus K_P,C_N\oplus K_\Omega)\\
&\cong\Ext^1_{C_1(\mathscr{P})}(C_M,C_N)\\
&\cong\Hom_{K_1(\mathscr{P})}(C_M,C_N[1])\\
&\cong\Hom_{D^b(A)/[1]}(M,N[1])\\
&\cong\bigoplus_{i\in\mathbb{Z}}\Hom_{D^b(A)}(M,N[i+1])\\
&\cong\Hom_A(M,N)\oplus\Ext^1_{A}(M,N)\\
&\cong\Hom_A(H_0(X^\cdot),H_0(Y^\cdot))\oplus\Ext^1_{A}(H_0(X^\cdot),H_0(Y^\cdot)).
\end{split}\end{equation*}
\end{proof}

The following well-known result is also needed in the sequel.
\begin{lemma}\label{end}
For each short exact sequence of $A$-modules
$$\xi: 0\longrightarrow M\longrightarrow L\longrightarrow N\longrightarrow0,$$
we have that $\dim_k\End_A L\leq\dim_k\End_A (M\oplus N)$, and ``=" holds if and only if $\xi$ is splitting.
\end{lemma}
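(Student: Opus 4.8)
The statement is a dimension-counting consequence of the long exact sequences in $\Hom$, so the plan is to bound $\dim_k\End_A L$ from above by $\dim_k\End_A(M\oplus N)$ through a short chain of such sequences, and then to extract a retraction from the equality case. First I would record the elementary identity
$$\dim_k\End_A(M\oplus N)=\dim_k\Hom_A(M,M)+\dim_k\Hom_A(M,N)+\dim_k\Hom_A(N,M)+\dim_k\Hom_A(N,N),$$
so the claim is that $\dim_k\Hom_A(L,L)$ is at most this sum.

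Next I would apply $\Hom_A(L,-)$ to $\xi$ to obtain the exact sequence $0\to\Hom_A(L,M)\to\Hom_A(L,L)\to\Hom_A(L,N)$, which gives $\dim_k\Hom_A(L,L)\leq\dim_k\Hom_A(L,M)+\dim_k\Hom_A(L,N)$. Then I would apply $\Hom_A(-,M)$ and $\Hom_A(-,N)$ to $\xi$, yielding exact sequences $0\to\Hom_A(N,M)\to\Hom_A(L,M)\to\Hom_A(M,M)$ and $0\to\Hom_A(N,N)\to\Hom_A(L,N)\to\Hom_A(M,N)$, hence $\dim_k\Hom_A(L,M)\leq\dim_k\Hom_A(N,M)+\dim_k\Hom_A(M,M)$ and $\dim_k\Hom_A(L,N)\leq\dim_k\Hom_A(N,N)+\dim_k\Hom_A(M,N)$. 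Substituting these two bounds into the first one produces exactly the desired inequality.

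For the equality statement, the ``if'' direction is immediate since a splitting gives $L\cong M\oplus N$. For the ``only if'' direction, I would observe that equality in the chain above forces equality at each step; in particular $\dim_k\Hom_A(L,M)=\dim_k\Hom_A(N,M)+\dim_k\Hom_A(M,M)$, which means the restriction map $\Hom_A(L,M)\to\Hom_A(M,M)$ (induced by the inclusion $M\hookrightarrow L$) is surjective. Taking a preimage of $\mathrm{id}_M$ yields a homomorphism $r\colon L\to M$ with $r|_M=\mathrm{id}_M$, i.e.\ a retraction of the inclusion, so $\xi$ splits.

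There is no real obstacle here: the argument is entirely formal. The only point requiring a little care is making sure the equality case is turned into a \emph{retraction} (via $\Hom_A(-,M)$) rather than merely a section, and that one identifies the correct connecting terms so the relevant maps are genuinely surjective; a symmetric argument using $\Hom_A(N,-)$ would produce a section of $L\twoheadrightarrow N$ instead, and either suffices.
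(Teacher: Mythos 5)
Your argument is correct: the three exact sequences obtained by applying $\Hom_A(L,-)$, $\Hom_A(-,M)$ and $\Hom_A(-,N)$ to $\xi$ give the inequality, and in the equality case the forced surjectivity of $\Hom_A(L,M)\to\Hom_A(M,M)$ does produce a retraction of $M\hookrightarrow L$, so $\xi$ splits. The paper itself offers no proof of this lemma (it is quoted as a well-known fact), and your proof is exactly the standard formal argument one would cite for it, so there is nothing substantive to compare; the only point worth double-checking, which you handle correctly, is that equality of the total forces equality in each of the three intermediate inequalities.
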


In order to have an intuitive cognition of the structure of $C_1(\mathscr{P})$, we give an example of the Auslander--Reiten quiver of $C_1(\mathscr{P})$. We recommend \cite{RZ} for the Auslander--Reiten theory of $C_1(\mathscr{P})$.

\begin{example}\label{lizi}
Let $Q$ be the quiver of type $\mathbb{A}_3$
$$1\longrightarrow 2\longrightarrow 3.$$
The Auslander--Reiten quiver of $C_1(\mathscr{P})$ is as follows:
\be\label{AR-quiver}\xymatrix@!=0.8pc{\ar@{.}[d]&K_{P_1}\ar[rd]&&&\\
C_{S_1}\ar@{.}[dd]\ar[ur]\ar[rd]\ar@{--}[rr]&&C_{P_1}\ar[rd]\ar@{--}[rr]&&C_{P_3}\ar@{.>}[u]\ar@{.}[dd]\\
&C_{P_2}\ar[ur]\ar[rd]\ar@{--}[rr]&&C_{I_2}\ar[ur]\ar[rd]&\\
C_{P_3}\ar@{.>}[d]\ar[ru]\ar[rd]\ar@{--}[rr]&&C_{S_2}\ar[ur]\ar[rd]\ar@{--}[rr]&&C_{S_1}\ar@{.}[d]\\
&K_{P_3}\ar[ur]&&K_{P_2}\ar[ur]&}\ee where the horizontal dashed lines denote different $\tau$-orbits, and we should join the vertical dotted lines such that their directions match, that is, the Auslander--Reiten quiver of $C_1(\mathscr{P})$ is like a Mobius strip. All $K_{P_i}$ are projective and injective in $C_1(\mathscr{P})$, and thus their orbits are themselves.
\end{example}

\subsection{Hall algebras}
Let $\mathcal{A}$ be a finitary and skeletally small exact $k$-category and let $W_{XY}^Z$ denote the set $\{(\varphi,\psi)~|~0\rightarrow Y\xrightarrow{\varphi} Z\xrightarrow{\psi} X\rightarrow 0~\text{is~exact~in}~\mathcal{A}\}$. The group $G:=\Aut{X}\times\Aut{Y}$ acts on $W_{XY}^Z$ via
$$\xymatrix{&0\ar[r]&Y\ar[r]^{\varphi}\ar[d]_{g}&Z\ar[r]^{\psi}\ar@{=}[d]&X\ar[r]\ar[d]^{f}&0\\
&0\ar[r]&Y\ar[r]^{\overline{\varphi}}&Z\ar[r]^{\overline{\psi}}&X\ar[r]&0.}$$
That is, for any $(\varphi,\psi)\in W_{XY}^Z$ and $(f,g)\in G$, $(f,g)\cdot (\varphi,\psi)=(\varphi g^{-1},f\psi)$.
We denote the set of $G$-orbits by $V_{XY}^Z$. Since $\varphi$ is monic and $\psi$ epic this action is free, and we define $$F_{X Y}^{Z}:=|V_{X Y}^{Z}|=\frac{|W_{X Y}^{Z}|}{|\Aut X|\cdot|\Aut Y|}.$$

By the Riedtmann--Peng formula \cite{Riedt,Peng}, we know that for any objects $X,Y,Z$ in $\A$
$$F_{X Y}^{Z}=\frac{|\Ext_{\A}^1(X,Y)_{Z}|}{|\Hom_{\A}(X,Y)|}\cdot
\frac{|\Aut{Z}|}{|\Aut{X}|\cdot|\Aut{Y}|},$$ where $\Ext_{\A}^1(X,Y)_{Z}$ denotes the subset of $\Ext_{\A}^1(X,Y)$ consisting of equivalence classes of exact sequences of the form $0\rightarrow Y\rightarrow Z\rightarrow X\rightarrow 0.$

\begin{definition}\label{def of Hall}
Let $\A$ be a finitary and skeletally small exact $k$-category.
The \emph{Hall algebra} $\H(\A)$ of $\A$ is the vector space over $\mathbb{C}$ with basis the isoclasses $[X]$ of objects in $\A$, and with multiplication defined by
\[[X]\cdot[Y] = \sum\limits_{[Z]} F_{X,Y}^{Z} [Z].\]
\end{definition}
In particular, if $\A=\mod A$, we obtain the Hall algebra $\H(\mod A)$ of $A$, which is also denoted by $\H(A)$; if $\A=C_1(\mathscr{P})$, we obtain the Hall algebra $\H(C_1(\mathscr{P}))$ of $C_1(\mathscr{P})$.

\subsection{Degenerate Hall algebras and Lie algebras associated to $C_1(\mathscr{P})$}
In this subsection, let $Q$ be a Dynkin quiver, that is, $Q$ is of type $\mathbb{A}\mathbb{D}\mathbb{E}$, and let $\Gamma$ be the underlying graph of $Q$. For each prime power $q$ ($\neq 1$ by convention), we  denote by  $A=A(q)$ the path algebra of $Q$ over $k=\mathbb{F}_q$.

By the well-known theorem of Gabriel \cite{Gabriel1,Gabriel2}, the correspondence $M\mapsto\Dim M$ induces a bijection between the set of isoclasses of indecomposable $A$-modules and the set of positive roots $\Phi^{+}$ of the simple Lie algebra $\mathfrak{g}$ associated with $\Gamma$. For each $\alpha\in\Phi^{+}$, let $M_q(\alpha)$ denote a representative of the corresponding indecomposable $A$-modules. For each $1\leq i\leq n$, let $\beta_i$ be the root in $\Phi^{+}$ such that $M_q(\beta_i)\cong P_i$.

By Lemma \ref{indec. obj.s}, the set $$\{C_{M_q(\alpha)}, K_{M_q(\beta_i)}~|~\alpha\in\Phi^{+}, 1\leq i\leq n\}$$ is a complete set of indecomposable objects in $C_1(\mathscr{P})$. Set $I=\{1,\cdots,n\}$, $\mathcal {I}_1(\Gamma)=\Phi^{+}\cup I$, and define $\mathfrak{P}_1(\Gamma)=\{\lambda:\mathcal {I}_1(\Gamma)\rightarrow\mathbb{N}\}$. By the Krull--Schmidt theorem, the correspondence sending $\lambda\in\mathfrak{P}_1(\Gamma)$ to $$C(\lambda)=C_q(\lambda)=(\bigoplus\limits_{\alpha\in\Phi^{+}}\lambda(\alpha){C_{M_q(\alpha)}})\bigoplus(\bigoplus\limits_{1\leq i\leq n}\lambda(i){K_{M_q(\beta_i)}})$$ induces a bijection from $\mathfrak{P}_1(\Gamma)$ to the set of isoclasses of objects in $C_1(\mathscr{P})$. An element $\lambda\in\mathfrak{P}_1(\Gamma)$ is said to be \emph{indecomposable} if $C_q(\lambda)$ is indecomposable and \emph{decomposable} otherwise.
\begin{remark}
There is a bijection from the set of functions $\lambda: \Phi^+\rightarrow\mathbb{N}$ to the set of isoclasses of $A$-modules by sending $\lambda\mapsto[M_q(\lambda)]$, where
$$M_q(\lambda)=\bigoplus\limits_{\alpha\in\Phi^+}\lambda(\alpha)M_q(\alpha)\in\mod A.$$
\end{remark}

\begin{proposition}{\rm(\cite[Theorem 3.6]{RSZ})}\label{poly}
For any $\lambda, \mu, \nu\in\mathfrak{P}_1(\Gamma)$, there exists a polynomial $\psi_{\mu \nu}^{\lambda}(x)\in\mathbb{Z}[x]$ such that for each prime power $q$,
$$\psi_{\mu \nu}^{\lambda}(q)=F_{C_q(\mu) C_q(\nu)}^{C_q(\lambda)}.$$
\end{proposition}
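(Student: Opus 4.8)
The plan is to prove the existence of Hall polynomials $\psi_{\mu\nu}^{\lambda}(x)$ governing the structure constants of $\H(C_1(\mathscr{P}))$ by reducing the question to the existence of Hall polynomials in $\mod A$, which is classical for Dynkin quivers (Ringel). The key structural input is Lemma \ref{kuozhang}: for $X^\cdot,Y^\cdot\in C_1(\mathscr{P})$ the extension group $\Ext^1_{C_1(\mathscr{P})}(X^\cdot,Y^\cdot)$ depends only on the homologies $H_0(X^\cdot),H_0(Y^\cdot)$, and in fact splits as $\Hom_A(H_0(X^\cdot),H_0(Y^\cdot))\oplus\Ext^1_A(H_0(X^\cdot),H_0(Y^\cdot))$. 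Combined with the Riedtmann--Peng formula, this means that to write $F_{C_q(\mu)C_q(\nu)}^{C_q(\lambda)}$ as a polynomial in $q$ it suffices to show that each of the three factors — $|\Ext^1_{C_1(\mathscr{P})}(C_q(\mu),C_q(\nu))_{C_q(\lambda)}|$, $|\Hom_{C_1(\mathscr{P})}(C_q(\mu),C_q(\nu))|$, and the automorphism-group orders $|\Aut C_q(\lambda)|$, $|\Aut C_q(\mu)|$, $|\Aut C_q(\nu)|$ — is (eventually, up to the Riedtmann--Peng cancellation) given by an integer polynomial in $q$.

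First I would handle the automorphism groups. Each object $C_q(\lambda)$ is a direct sum of the fixed indecomposables $C_{M_q(\alpha)}$ and $K_{M_q(\beta_i)}$ with multiplicities prescribed by $\lambda$; its endomorphism algebra is a finite-dimensional algebra over $\mathbb{F}_q$ whose dimension, and the dimension of whose radical, are determined combinatorially by $\lambda$ and the quiver $Q$ independently of $q$ (this is where one uses that in Dynkin type the relevant $\Hom$- and $\Ext$-dimensions between indecomposables are constant in $q$, a consequence of Gabriel's theorem and the fact that indecomposables are ``rigid'', exceptional). Then $|\Aut C_q(\lambda)| = q^{\dim\rad}(q^{f_1}-1)(q^{f_2}-1)\cdots$ where the $f_j$ are the dimensions of the simple factors of $\End C_q(\lambda)/\rad$, all of which are polynomial data in $q$. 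The same reasoning gives $|\Hom_{C_1(\mathscr{P})}(C_q(\mu),C_q(\nu))| = q^{d}$ where $d=\dim_k\Hom$ is again constant in $q$; indeed by Lemma \ref{indec. obj.s} and Lemma \ref{kuozhang} all the relevant Hom-spaces reduce to Hom- and Ext-groups in $\mod A$ between rigid modules, whose dimensions are $q$-independent.

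The main obstacle will be the middle factor $|\Ext^1_{C_1(\mathscr{P})}(C_q(\mu),C_q(\nu))_{C_q(\lambda)}|$, i.e. counting the extensions with prescribed middle term. Here the approach is to use the decomposition of Lemma \ref{kuozhang} to transport the count into $\mod A$: an extension $0\to C_q(\nu)\to E\to C_q(\mu)\to 0$ in $C_1(\mathscr{P})$ is classified by a pair $(h,e)\in\Hom_A(H_0(C_q(\mu)),H_0(C_q(\nu)))\oplus\Ext^1_A(H_0(C_q(\mu)),H_0(C_q(\nu)))$, and one must understand which pairs yield a middle term isomorphic to $C_q(\lambda)$. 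The isomorphism type of $E$ is determined by its homology $H_0(E)$ together with the ranks of the projective modules involved (the latter being additive and hence fixed by $\mu,\nu$, $\lambda$), so the condition on $E$ becomes a condition on the $A$-module $H_0(E)$ built from the extension $e$ twisted by $h$. One then stratifies $\Hom_A\oplus\Ext^1_A$ by the isomorphism type of the resulting homology; each stratum is the set of $\mathbb{F}_q$-points of a constructible subset cut out in a vector space (of $q$-independent dimension) by conditions on ranks of linear maps, and such counts are polynomial in $q$ whenever the ambient module theory is of finite representation type — this is precisely Ringel's Hall polynomial theorem for $\mod A$, applied to rewrite $F^{H_0(\lambda)}_{H_0(\mu),H_0(\nu)}$-type numbers and their refinements by the homomorphism part. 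Assembling the three polynomial pieces and invoking Riedtmann--Peng, the quotient is a priori a rational function of $q$ that takes integer values at all prime powers; a standard argument (it agrees with a polynomial at infinitely many points, or one tracks the cancellation of the $\Hom$-denominator directly against the $\Ext$-numerator as in Ringel's original proof) upgrades it to an element of $\mathbb{Z}[x]$, completing the proof.
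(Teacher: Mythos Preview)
The paper does not prove this proposition; it is quoted from \cite[Theorem~3.6]{RSZ} without argument, so there is no in-paper proof to compare your proposal against. (The analogous statement for $m$-cyclic complexes with $m\geq 2$ is attributed to \cite{ChenD} in the introduction.)

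As for the proposal itself: the overall architecture --- Riedtmann--Peng plus Lemma~\ref{kuozhang} to push everything into $\mod A$ --- is reasonable, and your treatment of $|\Aut|$ and $|\Hom|$ is fine. The substantive gap is in the middle factor. You assert that counting pairs $(h,e)\in\Hom_A(M,N)\oplus\Ext^1_A(M,N)$ whose associated extension has prescribed homology ``is precisely Ringel's Hall polynomial theorem for $\mod A$''. It is not. Ringel's theorem governs only the slice $h=0$, where $H_0(E)$ is the ordinary $\mod A$-extension determined by $e$. For $h\neq 0$ the long exact homology sequence (cf.\ diagram~(\ref{czh}) in the proof of Theorem~\ref{PBW}) shows that $H_0(E)$ depends on $\Ker h$, $\Coker h$, and how the $e$-component interacts with them; the isomorphism type of $H_0(E)$ is a genuinely coupled invariant of the pair $(h,e)$. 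Turning this into a polynomial count requires an honest further argument --- for instance, first stratifying by the isomorphism types of $\Ker h$ and $\Coker h$ (which \emph{is} controlled by subobject counts in $\mod A$, hence by Ringel), and then analysing the residual extension problem on each stratum --- but you have not carried this out, and the blanket appeal to ``rank conditions'' and ``finite representation type'' is not itself a proof.

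A smaller point: the closing step (``a rational function taking integer values at all prime powers lies in $\mathbb{Z}[x]$'') is false as stated --- think of $x(x-1)/2$. One must track integrality more carefully, typically by working with the subobject count $F=|V|$ directly rather than through the Riedtmann--Peng quotient.
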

The polynomials $\psi_{\mu \nu}^{\lambda}$ in Proposition \ref{poly} are called \emph{Hall polynomials}.

The \emph{degenerate Hall algebra} $\H_1(C_1(\mathscr{P}))$ of $C_1(\mathscr{P})$ is the same as $\H(C_1(\mathscr{P}))$ as vector spaces, but with multiplication defined by $$[C_q(\mu)][C_q(\nu)]= \sum\limits_{\lambda\in\mathfrak{P}_1(\Gamma)} \psi_{\mu \nu}^{\lambda}(1) [C_q(\lambda)].$$
\begin{remark}
By the well-known result of Ringel \cite{R91a}, the Hall polynomials exist in $\mod A$. Hence, we can similarly define the degenerate Hall algebra $\H_1(A)$ of $A$.
\end{remark}

Let $\tilde{\mathfrak{n}}$ be the vector space over $\mathbb{C}$ spanned by the isoclasses of indecomposable objects in $C_1(\mathscr{P})$, and let $\tilde{\mathfrak{n}}^{+}$ (resp. $\mathfrak{h}$) be the subspace of $\tilde{\mathfrak{n}}$ spanned by the isoclasses of indecomposable and non-projective (resp. projective) objects in $C_1(\mathscr{P})$.

\begin{proposition}{\rm(\cite[Proposition 4.1]{RSZ})}
$\tilde{\mathfrak{n}}$ is a Lie algebra with Lie bracket defined by
$$[[C_q(\mu)], [C_q(\nu)]]=\sum\limits_{\lambda\in\mathfrak{P}_1(\Gamma)}(\psi_{\mu \nu}^{\lambda}(1)-\psi_{\nu \mu}^{\lambda}(1))[C_q(\lambda)]$$ for any indecomposable $\mu, \nu\in\mathfrak{P}_1(\Gamma)$, and $\tilde{\mathfrak{n}}^{+},\mathfrak{h}$ are two Lie subalgebras (ideals) of~$\tilde{\mathfrak{n}}$.
\end{proposition}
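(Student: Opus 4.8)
The plan is to recognise the bracket on $\tilde{\mathfrak{n}}$ as the commutator bracket of the \emph{associative} algebra $\H_1(C_1(\mathscr{P}))$, and then to verify that the span of the indecomposables is closed under it. First I would note that $\H_1(C_1(\mathscr{P}))$ is a unital associative algebra: the Hall algebra $\H(C_1(\mathscr{P}))$ is associative for every prime power $q$, and by Proposition~\ref{poly} its structure constants are the Hall polynomials, so the associativity relation $\sum_{\sigma}\psi_{\mu\nu}^{\sigma}\psi_{\sigma\tau}^{\lambda}=\sum_{\rho}\psi_{\nu\tau}^{\rho}\psi_{\mu\rho}^{\lambda}$ is an identity of polynomials (all sums being finite, since Hall multiplication is homogeneous for the grading by total $k$-dimension), hence persists at $x=1$; the unit is $[0]$. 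Consequently $\big(\H_1(C_1(\mathscr{P})),\,[a,b]:=ab-ba\big)$ is a Lie algebra, so antisymmetry and the Jacobi identity hold automatically, and for indecomposable $\mu,\nu$ this commutator is exactly $\sum_{\lambda}\big(\psi_{\mu\nu}^{\lambda}(1)-\psi_{\nu\mu}^{\lambda}(1)\big)[C_q(\lambda)]$.

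It then remains to prove the closure property: \emph{for indecomposable $\mu,\nu\in\mathfrak{P}_1(\Gamma)$ and decomposable $\lambda\in\mathfrak{P}_1(\Gamma)$ one has $\psi_{\mu\nu}^{\lambda}(1)=\psi_{\nu\mu}^{\lambda}(1)$}; granting this, the commutator of two basis vectors of $\tilde{\mathfrak{n}}$ again lies in $\tilde{\mathfrak{n}}$, so $\tilde{\mathfrak{n}}$ is a Lie subalgebra of $\big(\H_1(C_1(\mathscr{P})),[\,,\,]\big)$ and the bracket in the statement is well defined. I would argue in two cases. If $C_q(\lambda)\cong C_q(\mu)\oplus C_q(\nu)$ with $C_q(\mu)\not\cong C_q(\nu)$ (the isomorphic case being trivial), the Riedtmann--Peng formula together with the analogue of Lemma~\ref{end} for the exact category $C_1(\mathscr{P})$ --- a short exact sequence whose middle term is $\cong C_q(\mu)\oplus C_q(\nu)$ must split, so $|\Ext_{C_1(\mathscr{P})}^1(C_q(\mu),C_q(\nu))_{C_q(\mu)\oplus C_q(\nu)}|=1$ --- yields $F_{C_q(\mu)\,C_q(\nu)}^{C_q(\lambda)}=|\Hom_{C_1(\mathscr{P})}(C_q(\nu),C_q(\mu))|$ and symmetrically $F_{C_q(\nu)\,C_q(\mu)}^{C_q(\lambda)}=|\Hom_{C_1(\mathscr{P})}(C_q(\mu),C_q(\nu))|$; both are powers of $q$, hence coincide at $q=1$. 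If $C_q(\lambda)$ is decomposable but not isomorphic to $C_q(\mu)\oplus C_q(\nu)$, I would write $C_q(\lambda)=Z_1\oplus Z_2$ with $Z_1,Z_2\neq0$ and let $k^{\times}$ act on $C_q(\lambda)$ by scaling $Z_1$. This action permutes the finite set $\mathcal{S}$ of subobjects $U\subseteq C_q(\lambda)$ with $U\cong C_q(\nu)$ and $C_q(\lambda)/U\cong C_q(\mu)$; a nontrivial stabiliser at some $U$ would make $U$ a graded subobject $(U\cap Z_1)\oplus(U\cap Z_2)$, which --- as $C_q(\mu)$ and $C_q(\nu)$ are indecomposable --- forces $C_q(\lambda)\cong C_q(\mu)\oplus C_q(\nu)$, excluded. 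Hence the $k^{\times}$-action on $\mathcal{S}$ is free, so $(q-1)$ divides $F_{C_q(\mu)\,C_q(\nu)}^{C_q(\lambda)}(q)$ for every prime power $q$, forcing $\psi_{\mu\nu}^{\lambda}(1)=0$; likewise $\psi_{\nu\mu}^{\lambda}(1)=0$.

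For the ideal assertions I would use that each $K_{M_q(\beta_i)}$ is projective--injective in $C_1(\mathscr{P})$ (Lemma~\ref{indec. obj.s}): every short exact sequence with one end $K_{M_q(\beta_i)}$ splits, and $\Ext_{C_1(\mathscr{P})}^1$ between $K_{M_q(\beta_i)}$ and any object vanishes. Running the Riedtmann--Peng computation of the split case above then gives $[[K_{M_q(\beta_i)}],[X]]=0$ for every indecomposable $X$ --- the two relevant $\Hom$-factors being powers of $q$ agreeing at $q=1$ --- so $\mathfrak{h}$ is in fact a central ideal. For $\tilde{\mathfrak{n}}^{+}$ it remains to check that no $K_{M_q(\beta_i)}$ occurs in $[[C_M],[C_N]]$, and indeed $F_{C_M\,C_N}^{K_P}=0$ identically, since a short exact sequence $0\to C_N\to K_P\to C_M\to0$ would split and force the indecomposable $K_P$ to be $\cong C_M\oplus C_N$. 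As $\tilde{\mathfrak{n}}=\tilde{\mathfrak{n}}^{+}\oplus\mathfrak{h}$, both are ideals.

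The main obstacle I anticipate is the closure property: making rigorous that a graded subobject of $C_q(\lambda)$ with the prescribed sub- and quotient-types forces $C_q(\lambda)\cong C_q(\mu)\oplus C_q(\nu)$ (hence that the $k^{\times}$-stabilisers are trivial), and, in the split case, transporting the $\dim_k\End$ estimate of Lemma~\ref{end} into the exact category $C_1(\mathscr{P})$ so as to pin down $|\Ext_{C_1(\mathscr{P})}^1(C_q(\mu),C_q(\nu))_{C_q(\mu)\oplus C_q(\nu)}|=1$.
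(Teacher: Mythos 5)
The paper does not actually prove this proposition --- it is quoted from \cite[Proposition 4.1]{RSZ} --- so there is no in-paper proof to compare against; what you have reconstructed is the expected Riedtmann-type degeneration argument, and its core is sound: associativity of $\H_1(C_1(\mathscr{P}))$ via polynomial identities, the split case via the Riedtmann--Peng formula (the needed splitting statement does transfer, since conflations in $C_1(\mathscr{P})$ are genuine short exact sequences of modules over $\Lambda Q$, so the $\dim\End$ argument of Lemma \ref{end} applies verbatim), and the $(q-1)$-divisibility via the free $k^{\times}$-action when $C_q(\lambda)$ is decomposable but not isomorphic to $C_q(\mu)\oplus C_q(\nu)$.

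There is, however, one genuinely false step: the claim that $F_{C_M\,C_N}^{K_P}=0$ identically ``since a short exact sequence $0\to C_N\to K_P\to C_M\to 0$ would split''. Projectivity and injectivity of $K_P$ only split conflations having $K_P$ at an \emph{end}, not in the middle, and such sequences do exist and are non-split: for instance $0\to C_{P_j}\to K_{P_j}\to C_{P_j}\to 0$ (include $P_j$ as the kernel of the differential of $K_{P_j}$). Indeed the paper's own computations contradict your claim: Lemma \ref{cp} with $m=1$, equivalently the first displayed computation in the proof of Proposition \ref{jbgx}(2), gives $[C_{P_j}]^2=(q+1)[C_{2P_j}]+[K_{P_j}]$, i.e.\ $F_{C_{P_j}C_{P_j}}^{K_{P_j}}=1\neq 0$. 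The conclusion you need nevertheless survives, with a different justification: if $F_{C_M\,C_N}^{K_P}\neq 0$, then applying the six-term homology sequence (\ref{czh}) to $0\to C_N\to K_P\to C_M\to 0$ and using $H_0(K_P)=0$ forces the connecting map to be an isomorphism $M\cong N$; hence for $C_M\not\cong C_N$ no $[K_P]$ occurs in either product $[C_M][C_N]$ or $[C_N][C_M]$, while for $C_M\cong C_N$ the bracket vanishes trivially. With this repair, together with your (correct) computation that $[\,[K_{P_i}],[X]\,]=0$ for all indecomposable $X$, the assertions that $\tilde{\mathfrak{n}}^{+}$ and $\mathfrak{h}$ are ideals follow as you indicate; so the overall proof is salvageable, but the step as written would fail.
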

We remark that as Lie algebras $\tilde{\mathfrak{n}}\cong\tilde{\mathfrak{n}}^{+}\oplus\mathfrak{h}$, and $[\tilde{\mathfrak{n}},\mathfrak{h}]=0$. Thus by the PBW-basis theorem of universal enveloping algebras, we have the decomposition of the universal enveloping algebra $U(\tilde{\mathfrak{n}})\cong U(\tilde{\mathfrak{n}}^{+})\otimes U(\mathfrak{h})$, and $U(\mathfrak{h})\cong k[x_1,x_2,\cdots,x_n]$. Actually, since $[\tilde{\mathfrak{n}}^{+},\mathfrak{h}]=0$, it is easy to see that \begin{equation}\label{fi}
g: U(\tilde{\mathfrak{n}}^{+})\otimes k[x_1,x_2,\cdots,x_n]\longrightarrow U(\tilde{\mathfrak{n}}),~~[C_M]\mapsto [C_M],x_i\mapsto K_{P_i},\end{equation}
where $M\in\mod A$ is indecomposable and $1\leq i\leq n$, is an isomorphism of algebras.

In order to state the following theorem, which is the main result of \cite{RSZ}, we introduce the \emph{path matrix} $E=(a_{ij})_{n\times n}$ of the Dynkin quiver $Q$: if there is a path between $i$ and $j$ in $Q$, say from $i$ to $j$, then $a_{ij}=1$ and $a_{ji}=-1$; otherwise, $a_{ij}=a_{ji}=0$.
\begin{theorem}{\rm(\cite[Theorem 4.3]{RSZ})}\label{cpg}
The Lie algebra $\tilde{\mathfrak{n}}^{+}$ is generated by $\{[C_{P_i}]~|~1\leq i\leq n\}$, and these generators satisfy the following relations:
\begin{itemize}
\item[(a)] If $|a_{ij}|=1$, $(\ad \epsilon_i)^2(\epsilon_j)=(\ad \epsilon_j)^2(\epsilon_i)=0$;
\item[(b)] If $a_{ij}a_{jk}=1$, $[\epsilon_i,[\epsilon_j,\epsilon_k]]=[\epsilon_k,[\epsilon_i,\epsilon_j]]=0$;
\item[(c)] If $a_{ij}=0$, $[\epsilon_i,\epsilon_j]=0$,
\end{itemize}
where $\epsilon_i:=[C_{P_i}]$ for each $1\leq i\leq n$.
\end{theorem}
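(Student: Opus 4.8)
The plan is to realize $\tilde{\mathfrak{n}}^{+}$ as a homomorphic image of the positive part of the Kac--Moody algebra attached to a (possibly larger) graph, and then to verify surjectivity and the listed relations directly from Hall-number computations in $C_1(\mathscr{P})$. First I would recall from Lemma \ref{indec. obj.s} that the indecomposable non-projective objects of $C_1(\mathscr{P})$ are exactly the $C_M$ with $M$ indecomposable, and that via the equivalence $K_1(\mathscr{P})\simeq D^b(A)/[1]$ the extension groups are governed by Lemma \ref{kuozhang}. In particular, for indecomposable modules $M,N$ one has $\Ext^1_{C_1(\mathscr{P})}(C_M,C_N)\cong\Hom_A(M,N)\oplus\Ext^1_A(M,N)$, so the structure constants $\psi^{\lambda}_{\mu\nu}(1)$ are controlled by both homomorphisms and genuine extensions between $M$ and $N$. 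The Lie bracket on $\tilde{\mathfrak{n}}^{+}$ therefore sees a ``symmetrized'' version of this twisted exact structure.

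Next I would establish that the $\epsilon_i=[C_{P_i}]$ generate. The key point is that every $C_M$ can be produced from the $C_{P_i}$ by iterated brackets: one filters an indecomposable module $M$ by its radical layers (or uses the existence of short exact sequences $0\to P_i\to P\to M'\to 0$ coming from projective presentations), translates such a sequence into an exact sequence in $C_1(\mathscr{P})$ among the $C$'s and $K$'s, and reads off that $[C_M]$ appears with nonzero coefficient in a bracket of previously-constructed elements, modulo lower terms and possible $K_{P_j}$-contributions. Since the $K_{P_j}$ are projective-injective they do not lie in $\tilde{\mathfrak{n}}^{+}$, so I must argue that these contributions either vanish or can be absorbed; here Lemma \ref{Ext} together with the vanishing of $\Hom$ and $\Ext^1$ out of projectives in suitable directions does the bookkeeping. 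An induction on $\dim_k M$ (or on the length of $M$ in $\mod A$), ordered compatibly with the path structure of $Q$, then yields every $[C_M]$, hence all of $\tilde{\mathfrak{n}}^{+}$.

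For the relations I would compute the relevant Hall polynomials at $q=1$ in the three cases. When $a_{ij}=0$ there is neither a path nor arrow between $i$ and $j$, so $\Hom_A(P_i,P_j)=\Hom_A(P_j,P_i)=0$ and $\Ext^1_A(P_i,P_j)=\Ext^1_A(P_j,P_i)=0$; by Lemma \ref{kuozhang} the only extensions of $C_{P_i}$ by $C_{P_j}$ are split, so $[\epsilon_i,\epsilon_j]=0$. When $|a_{ij}|=1$, exactly one of $\Hom_A(P_i,P_j),\Hom_A(P_j,P_i)$ is one-dimensional and the other Hom/Ext groups vanish; I would compute $[\epsilon_i,[\epsilon_i,\epsilon_j]]$ by first identifying $[\epsilon_i,\epsilon_j]$ as (a multiple of) the class of the indecomposable $C$-object arising from the nonzero map $P_i\to P_j$ (or the cone thereof in $D^b(A)/[1]$), then bracketing again with $\epsilon_i$ and checking via $\dim_k\End$-type arguments (Lemma \ref{end}) that no new indecomposable can appear — the relevant $\Hom$ and $\Ext^1$ spaces are already exhausted, forcing the double bracket to vanish. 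The case $a_{ij}a_{jk}=1$ (a length-two path $i\to j\to k$ or its reverse) is handled similarly: $[\epsilon_j,\epsilon_k]$ produces the $C$-object attached to the composite structure on $j\to k$, and bracketing with $\epsilon_i$ from the appropriate side either reproduces the longer path class (which is symmetric and hence killed in the Lie bracket) or meets only vanishing Hom/Ext, giving $[\epsilon_i,[\epsilon_j,\epsilon_k]]=0$ and likewise $[\epsilon_k,[\epsilon_i,\epsilon_j]]=0$.

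The main obstacle, I expect, is the precise bookkeeping of the $K_{P_j}$-terms and of the sign/symmetrization in the Lie bracket: because the exact structure of $C_1(\mathscr{P})$ mixes $\Hom_A$ and $\Ext^1_A$, a single bracket $[\epsilon_i,\epsilon_j]$ can a priori involve several $C_N$ together with projective-injective summands, and one must show that after antisymmetrizing ($\psi^{\lambda}_{\mu\nu}(1)-\psi^{\lambda}_{\nu\mu}(1)$) exactly the claimed single indecomposable survives with the right multiplicity. I would control this by using Lemma \ref{end} to bound which $Z$ can occur as a middle term of a non-split sequence, combined with the explicit description of $C_f$ for a map $f\colon\Omega\to P$ of projectives and the identification of homotopy classes in $K_1(\mathscr{P})$; this pins down the indecomposable summands uniquely and shows the would-be extra terms either cancel in the antisymmetrization or lie outside $\tilde{\mathfrak{n}}^{+}$ in a way that is consistent with $\tilde{\mathfrak{n}}^{+}$ being an ideal with $[\tilde{\mathfrak{n}}^{+},\mathfrak{h}]=0$.
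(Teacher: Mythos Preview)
The paper does not prove this theorem: it is quoted from \cite{RSZ} and no argument is given here. What the paper does do is recover relations (a) and (c) as the $q=1$ specialisation of Proposition~\ref{jbgx} (this is Remark 6.2(1)): the explicit Hall-number computations there show $[C_{P_i}][C_{P_j}]=[C_{P_j}][C_{P_i}]$ when $a_{ij}=0$, and the three identities in (\ref{first}) and (\ref{second}) collapse at $q=1$ to the Serre relations $(\ad\epsilon_i)^2\epsilon_j=0$. Relation (b) and, crucially, the generation statement are not derived in this paper at all.

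Your treatment of the relations is therefore in the same spirit as what the paper actually contains: you propose to compute the relevant Hall numbers via Lemma~\ref{kuozhang} and specialise. That is exactly how (a) and (c) come out of Proposition~\ref{jbgx}. Your sketch for (b) is plausible but thin; since the paper does not do (b) either there is nothing to compare.

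The genuine gap in your proposal is the generation step. You suggest inducting on $\dim_k M$ via radical filtrations or short exact sequences $0\to P_i\to P\to M'\to 0$, but neither of these lives naturally in $C_1(\mathscr{P})$ in a way that produces $[C_M]$ as an \emph{iterated Lie bracket of indecomposables}. The paper's own generation argument (Theorem~\ref{main result}) works in the associative Hall algebra and uses the product $[C_\Omega][C_P]$ for the full minimal projective resolution $0\to\Omega\to P\to M\to 0$; but $\Omega$ and $P$ are typically decomposable, so $[C_\Omega]$ and $[C_P]$ are not elements of $\tilde{\mathfrak n}^+$ and you cannot simply antisymmetrise that identity. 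To prove Lie-algebra generation you need a different induction: one must find, for each indecomposable non-projective $M$, indecomposable $N,L$ with $[C_M]$ appearing (with nonzero coefficient) in $[[C_N],[C_L]]$ and with $N,L$ already in the subalgebra. Your sketch does not supply such $N,L$, and the phrases ``modulo lower terms and possible $K_{P_j}$-contributions'' and ``these contributions either vanish or can be absorbed'' are exactly the content that is missing. This is handled in \cite{RSZ}, not here.
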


\section{PBW-basis of $\H(C_1(\mathscr{P}))$}
In this section, we will use the intrinsic filtered structure of $\H(C_1(\mathscr{P}))$ to prove its PBW-basis theorem. The PBW-basis theorem of the Hall algebra of an algebra is proved in \cite{GuoP}, and it is generalized to the Hall algebra of a finitary exact category in \cite{BG}.

Let us state the PBW-basis theorem of $\H(C_1(\mathscr{P}))$ as follows:
\begin{theorem}\label{PBW}
$S:=\{[X^\cdot]~|~X^\cdot\in C_1(\mathscr{P})~\text{is~indecomposable}\}$ is a universal PBW-basis of the Hall algebra $\H(C_1(\mathscr{P}))$. That is, for any total order $\leq$ on $S$, the monomials $[X^\cdot_1]^{a_1}[X^\cdot_2]^{a_2}\cdots[X^\cdot_N]^{a_N}$, where $[X^\cdot_1]\leq[X^\cdot_2]\leq\cdots\leq[X^\cdot_N]$ are the whole isoclasses of pairwise non-isomorphic indecomposable objects and all $a_i\in\mathbb{N}$, together with the unit $[0]$, form a basis for $\H(C_1(\mathscr{P}))$.
\end{theorem}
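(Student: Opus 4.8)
The plan is to establish a filtration on $\H(C_1(\mathscr{P}))$ indexed by a suitable partially-ordered monoid and to show that the associated graded algebra is a polynomial algebra on the classes $[X^\cdot]$ with $X^\cdot$ indecomposable, so that the general PBW machinery for Hall algebras of finitary exact categories from \cite{BG} (generalizing \cite{GuoP}) applies verbatim. The key invariant is the \emph{endomorphism-dimension} function: for an object $Z^\cdot\in C_1(\mathscr{P})$ set $d(Z^\cdot)=\dim_k\End_{C_1(\mathscr{P})}Z^\cdot$. First I would record the elementary fact that if $0\to Y^\cdot\to Z^\cdot\to X^\cdot\to 0$ is a non-split conflation in $C_1(\mathscr{P})$, then $d(Z^\cdot)<d(X^\cdot\oplus Y^\cdot)$, with equality precisely when the conflation splits; since $C_1(\mathscr{P})$ is a Frobenius exact category, this is the exact analogue of Lemma \ref{end} and follows by the same argument applied to the long exact sequence obtained from $\Hom_{C_1(\mathscr{P})}(-,Z^\cdot)$ and $\Hom_{C_1(\mathscr{P})}(Z^\cdot,-)$, using that $\Ext^1$ in a Frobenius category is computed in the stable category.

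Next I would use this to build the filtration. For $\lambda\in\mathfrak{P}_1(\Gamma)$ (equivalently, for the isoclass $[C(\lambda)]$, though here $Q$ need only be acyclic — the indexing set is just the isoclasses of objects via Krull--Schmidt), assign the pair $\bigl(r(\lambda),\ d(C(\lambda))\bigr)$ where $r(\lambda)=\sum_{x}\lambda(x)$ is the total number of indecomposable summands. Order these pairs lexicographically, or more simply, filter $\H(C_1(\mathscr{P}))$ by the subspaces $\H_{\le(r,e)}$ spanned by $[C(\lambda)]$ with $r(\lambda)\le r$ and, among those with $r(\lambda)=r$, with $d(C(\lambda))\ge e$ (note the reversed direction for $d$). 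The structure-constant formula $F_{X^\cdot Y^\cdot}^{Z^\cdot}=\frac{|\Ext^1(X^\cdot,Y^\cdot)_{Z^\cdot}|}{|\Hom(X^\cdot,Y^\cdot)|}\cdot\frac{|\Aut Z^\cdot|}{|\Aut X^\cdot||\Aut Y^\cdot|}$ shows that $F_{X^\cdot Y^\cdot}^{Z^\cdot}\ne 0$ forces a conflation $0\to Y^\cdot\to Z^\cdot\to X^\cdot\to 0$, hence $d(Z^\cdot)\le d(X^\cdot\oplus Y^\cdot)$ by the previous paragraph, and $r(Z^\cdot)\le r(X^\cdot)+r(Y^\cdot)$ always; moreover $d(Z^\cdot)=d(X^\cdot\oplus Y^\cdot)$ forces the conflation to split, so $Z^\cdot\cong X^\cdot\oplus Y^\cdot$ and this term appears with coefficient $1$ (the "diagonal" term). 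Thus $[X^\cdot]\cdot[Y^\cdot]\equiv[X^\cdot\oplus Y^\cdot]$ modulo strictly lower filtration pieces, so the associated graded ring is commutative and is generated by the images of the indecomposable classes, with no relations beyond commutativity — i.e. it is a polynomial ring on $S$.

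Finally I would invoke the abstract PBW criterion: a filtered algebra whose associated graded algebra is a (free) polynomial algebra on a distinguished set of homogeneous generators admits, for \emph{any} total order on that set, a PBW-basis consisting of the ordered monomials in any chosen lifts of those generators. Applying this with the lifts being the actual Hall basis elements $[X^\cdot]$, $X^\cdot$ indecomposable, and noting that the unit of $\H(C_1(\mathscr{P}))$ is $[0]$, yields exactly the claimed basis of ordered monomials $[X_1^\cdot]^{a_1}\cdots[X_N^\cdot]^{a_N}$. I expect the main obstacle to be purely bookkeeping: verifying that the two-parameter index $(r,d)$ genuinely makes $\H(C_1(\mathscr{P}))$ a filtered algebra in the sense required by \cite{BG} — in particular that the partial order is compatible with the additive monoid structure on isoclasses and that the "leading term" of a product of monomials is again a single monomial with coefficient $1$ — and checking that the Frobenius/stable-category subtlety does not spoil the strict-inequality statement for non-split conflations when one of $X^\cdot,Y^\cdot$ has projective-injective summands $K_P$ (for which $\Ext^1(K_P,-)=\Ext^1(-,K_P)=0$, so such summands simply split off and contribute trivially to the filtration estimate).
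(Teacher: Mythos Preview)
Your core idea---ordering isoclasses by $d(Z^\cdot)=\dim_k\End_{C_1(\mathscr{P})}Z^\cdot$ and using that $d(Z^\cdot)<d(X^\cdot\oplus Y^\cdot)$ for every non-split conflation---is correct and yields a valid proof, but it is \emph{not} the route the paper takes, and several of the supporting claims you make are false as stated.  The paper never uses $\End_{C_1(\mathscr{P})}$; instead it passes to homology and sets $\deg X^\cdot=(\Dim H_0(X^\cdot),\,\dim_k\End_A H_0(X^\cdot))$.  A conflation in $C_1(\mathscr{P})$ induces a long exact sequence in $H_0$, from which one reads off $\Dim H_0(Z^\cdot)\le\Dim H_0(X^\cdot)+\Dim H_0(Y^\cdot)$, and when equality holds one gets an honest short exact sequence in $\mod A$ to which Lemma~\ref{end} (for modules) applies.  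Your approach skips this detour through $\mod A$ entirely and applies the endomorphism-dimension inequality directly in the exact category $C_1(\mathscr{P})$; that is a genuine simplification and works because the standard proof of Lemma~\ref{end} only uses left-exactness of $\Hom$ and the connecting map to $\Ext^1$, both available in any exact category.

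However, three statements in your write-up are wrong and should be repaired.  First, the leading coefficient $F_{X^\cdot Y^\cdot}^{X^\cdot\oplus Y^\cdot}$ is \emph{not} $1$: already $F_{C_P\,C_P}^{C_{2P}}=q+1$ by Lemma~\ref{cp}.  The Riedtmann--Peng formula gives $F_{X^\cdot Y^\cdot}^{X^\cdot\oplus Y^\cdot}=\tfrac{|\Aut(X^\cdot\oplus Y^\cdot)|}{|\Hom(X^\cdot,Y^\cdot)|\cdot|\Aut X^\cdot|\cdot|\Aut Y^\cdot|}$, which is nonzero but rarely $1$.  Second, and consequently, the associated graded is \emph{not} commutative: with $a_{ij}=1$ one has $|\Hom_{C_1(\mathscr{P})}(C_{P_i},C_{P_j})|=1$ but $|\Hom_{C_1(\mathscr{P})}(C_{P_j},C_{P_i})|=q$, so $F_{C_{P_i}C_{P_j}}^{C_{P_i\oplus P_j}}\neq F_{C_{P_j}C_{P_i}}^{C_{P_i\oplus P_j}}$ (indeed Proposition~\ref{jbgx} gives $q$ versus $1$).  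Hence you cannot invoke a ``polynomial associated graded'' criterion; rather, argue directly that each ordered monomial equals a nonzero scalar times $[X^\cdot]$ plus terms of strictly smaller $d$, so the transition matrix between monomials and the tautological basis is unitriangular up to nonzero scalars for a well-founded order---this is exactly what the paper does with its $\deg$.  Third, the auxiliary index $r$ is both unnecessary (since $d$ alone already separates $X^\cdot\oplus Y^\cdot$ from all other middle terms) and unjustified: you assert ``$r(Z^\cdot)\le r(X^\cdot)+r(Y^\cdot)$ always'' without proof, and this is not a general feature of exact categories.  Drop $r$, use $d$ alone, replace ``coefficient $1$'' by ``nonzero coefficient'', and your argument goes through and is in fact cleaner than the paper's.
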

\begin{proof}
For any short exact sequence in $C_1(\mathscr{P})$
$$0\longrightarrow Y^\cdot\longrightarrow Z^\cdot\longrightarrow X^\cdot\longrightarrow 0,$$ it induces a long exact sequence in homology
\begin{equation}\label{czh}\xymatrix{0\ar[r]&K\ar[r]&H_0(Y^\cdot)\ar[r]^-{\varphi}&H_0(Z^\cdot)\ar[r]^-{\psi}&H_0(X^\cdot)\ar[lld]_-{f}\ar[r]&Q\ar[r]&0\\
&&H_0(Y^\cdot)\ar[r]^-{\varphi}&H_0(Z^\cdot)\ar[r]^-{\psi}&H_0(X^\cdot).&&}\end{equation}
Since $\Ker \varphi\cong\im f\cong H_0(X^\cdot)/\Ker f\cong H_0(X^\cdot)/\im \psi\cong\Coker\psi$, we obtain that $K\cong Q$ and $\Dim H_0(Z^\cdot)=\Dim H_0(X^\cdot)+\Dim H_0(Y^\cdot)-2\Dim Q$.

For any $X^\cdot\in C_1(\mathscr{P})$, define $\Dim X^\cdot:=\Dim H_0(X^\cdot)$. For each $\alpha\in \mathbb{N}^n$, let $\H_{\leq \alpha}(C_1(\mathscr{P}))$ be the subspace of $\H(C_1(\mathscr{P}))$ spanned by all $[X^\cdot]$ with $\Dim X^\cdot\leq\alpha$ (in the sense that each component of $\Dim X^\cdot$ is equal or less than the corresponding component of $\alpha$). Then $\H_{\leq \alpha}(C_1(\mathscr{P}))*\H_{\leq\beta}(C_1(\mathscr{P}))\subseteq \H_{\leq \alpha+\beta}(C_1(\mathscr{P}))$, and thus $\H(C_1(\mathscr{P}))$ is an $N^n$-filtered algebra.

By Lemma \ref{kuozhang}, $$\Ext^1_{C_1(\mathscr{P})}(X^\cdot,Y^\cdot)\cong\Hom_A(H_0(X^\cdot),H_0(Y^\cdot))\oplus\Ext^1_{A}(H_0(X^\cdot),H_0(Y^\cdot)).$$
If $f$ in (\ref{czh}) is nonzero, then $\Dim H_0(Z^\cdot)<\Dim H_0(X^\cdot)+\Dim H_0(Y^\cdot)$;
if $f$ in (\ref{czh}) is zero, then $K=Q=0$, thus we obtain the short exact sequence $$\xi: 0\longrightarrow H_0(Y^\cdot)\longrightarrow H_0(Z^\cdot)\longrightarrow H_0(X^\cdot)\longrightarrow0,$$ in this case, $\Dim H_0(Z^\cdot)=\Dim H_0(X^\cdot)+\Dim H_0(Y^\cdot)$; and if $\xi$ is not splitting, by Lemma \ref{end}, $$\dim_k\End_A(H_0(Z^\cdot))<\dim_k\End_A(H_0(X^\cdot)\oplus H_0(Y^\cdot)).$$

Let us give an order on $\mathbb{N}^n\times\mathbb{N}$:
$$(\alpha_1,d_1)\leq(\alpha_2,d_2)\Longleftrightarrow``\alpha_1<\alpha_2"~ \text{or}~``\alpha_1=\alpha_2,d_1\leq d_2".$$

For any $X^{\cdot}\in C_1(\mathscr{P})$, set $\deg X^{\cdot}:=(\Dim H_0(X^\cdot),\dim_k\End_A(H_0(X^\cdot)))$.
Then for any $X^{\cdot},Y^{\cdot}\in C_1(\mathscr{P})$,
$$[X^{\cdot}][Y^{\cdot}]=a_{X^{\cdot}\oplus Y^{\cdot}}[X^{\cdot}\oplus Y^{\cdot}]+\sum\limits_{[Z^{\cdot}]:\deg Z^{\cdot}<\deg X^{\cdot}\oplus Y^{\cdot}}a_{Z^{\cdot}}[Z^{\cdot}].$$

For each fixed total order $\leq$ on the set $S$ such that $[X^\cdot_1]\leq[X^\cdot_2]\leq\cdots\leq[X^\cdot_N]$ are all pairwise non-isomorphic indecomposable objects in $C_1(\mathscr{P})$. Let $X^\cdot=\bigoplus\limits_{i=1}^Na_iX^\cdot_i$, $a_i\in\mathbb{N}$. Then
$$[X^\cdot_1]^{a_1}[X^\cdot_2]^{a_2}\cdots [X^\cdot_N]^{a_N}=a_{X^\cdot}[X^\cdot]+\sum\limits_{[Z^{\cdot}]:\deg Z^{\cdot}<\deg X^{\cdot}}a_{Z^{\cdot}}[Z^{\cdot}].$$
Clearly, $a_{X^{\cdot}}\neq0$. By induction on $\deg X^{\cdot}$, we complete the proof.
\end{proof}

\begin{remark}
Let $Q$ be a Dynkin quiver.
Let $V_0$ be the direct sum of one copy of each indecomposable $A$-module. For each $A$-module $M$, set $d(M):=\dim_k\Hom_A(V_0,M)$. Then for each short exact sequence of $A$-modules $\xi: 0\rightarrow M\rightarrow L\rightarrow N\rightarrow 0$, $d(L)\leq d(M)+d(N)$ and $``="$ holds if and only if $\xi$ is splitting (cf. \cite{FX}).

For any $X^\cdot\in C_1(\mathscr{P})$, define $\Deg X^\cdot=(\Dim H_0(X^\cdot),d(H_0(X^\cdot)))$. Then we can obtain that $\H(C_1(\mathscr{P}))$ is $\mathbb{N}^{n+1}$-filtered with $\H_{\leq \alpha}(C_1(\mathscr{P}))$ being the subspace of $\H(C_1(\mathscr{P}))$ spanned by all $[X^\cdot]$ with $\Deg X^\cdot\leq\alpha$, where $\alpha\in\mathbb{N}^{n+1}$.
\end{remark}

\begin{corollary}
Let $Q$ be a Dynkin quiver. Then the degenerate Hall algebra $\H_1(C_1(\mathscr{P}))$ is isomorphic to the universal enveloping algebra $U(\tilde{\mathfrak{n}})$.
\end{corollary}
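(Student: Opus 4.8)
The plan is to realise $\H_1(C_1(\mathscr{P}))$ as the enveloping algebra of $\tilde{\mathfrak{n}}$ by producing an explicit algebra homomorphism $\Phi\colon U(\tilde{\mathfrak{n}})\to\H_1(C_1(\mathscr{P}))$ and showing it identifies PBW bases on the two sides. First I would transport the filtered structure of $\H(C_1(\mathscr{P}))$ to $\H_1(C_1(\mathscr{P}))$. For $\mu,\nu,\lambda\in\mathfrak{P}_1(\Gamma)$, if $\psi_{\mu\nu}^\lambda(1)\ne 0$ then the Hall polynomial $\psi_{\mu\nu}^\lambda$ is not identically zero, so $F_{C_q(\mu)C_q(\nu)}^{C_q(\lambda)}=\psi_{\mu\nu}^\lambda(q)\ne0$ for some prime power $q$; by the argument in the proof of Theorem \ref{PBW} together with the Remark following it, this forces $\Deg C_q(\lambda)\le\Deg(C_q(\mu)\oplus C_q(\nu))$, with equality exactly when $\lambda=\mu+\nu$ (i.e. $C_q(\lambda)\cong C_q(\mu)\oplus C_q(\nu)$). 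Hence $\H_1(C_1(\mathscr{P}))$ is $\mathbb{N}^{n+1}$-filtered by $\Deg$, and in it
$$[C_q(\mu)]\cdot[C_q(\nu)]=\psi_{\mu\nu}^{\mu+\nu}(1)\,[C_q(\mu+\nu)]+(\text{terms of strictly smaller }\Deg).$$
Iterating, for an indecomposable decomposition $X^{\cdot}=\bigoplus_{i=1}^{N}a_iX^{\cdot}_i$ one gets $[X^{\cdot}_1]^{a_1}\cdots[X^{\cdot}_N]^{a_N}=c_{X^{\cdot}}\,[X^{\cdot}]+(\text{strictly smaller }\Deg)$, where $c_{X^{\cdot}}$ is a product of ``diagonal'' values $\psi_{Y^{\cdot}Z^{\cdot}}^{Y^{\cdot}\oplus Z^{\cdot}}(1)$, each of which is a positive integer (for $Y^{\cdot},Z^{\cdot}$ built from the same indecomposables it is a product of binomial coefficients), so $c_{X^{\cdot}}\ne0$. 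Thus the ordered monomials in the $[X^{\cdot}_i]$ already form a basis of $\H_1(C_1(\mathscr{P}))$, and in particular $\H_1(C_1(\mathscr{P}))$ is generated by the classes of indecomposable objects.

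Next I would construct $\Phi$. By \cite{RSZ} — concretely, the fact underlying \cite[Proposition 4.1]{RSZ} that $\psi_{\mu\nu}^{\lambda}(1)=\psi_{\nu\mu}^{\lambda}(1)$ whenever $\lambda$ is decomposable and $\mu,\nu$ are indecomposable — the commutator in $\H_1(C_1(\mathscr{P}))$ of the classes of two indecomposable objects equals $\sum_{\lambda}(\psi_{\mu\nu}^{\lambda}(1)-\psi_{\nu\mu}^{\lambda}(1))[C_q(\lambda)]$, which is precisely the Lie bracket of $\tilde{\mathfrak{n}}$. Hence $[X^{\cdot}]\mapsto[X^{\cdot}]$ (on indecomposables) is a homomorphism of Lie algebras from $\tilde{\mathfrak{n}}$ into $\H_1(C_1(\mathscr{P}))$ under the commutator, and by the universal property of the enveloping algebra it extends to an algebra homomorphism $\Phi\colon U(\tilde{\mathfrak{n}})\to\H_1(C_1(\mathscr{P}))$. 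Fixing a total order on the set of indecomposable objects, the PBW theorem for $U(\tilde{\mathfrak{n}})$ gives a basis of ordered monomials indexed by $\mathfrak{P}_1(\Gamma)$, and by the previous paragraph $\Phi$ sends the monomial attached to $\lambda$ to $c_{C_q(\lambda)}[C_q(\lambda)]+(\text{strictly smaller }\Deg)$ with $c_{C_q(\lambda)}\ne0$. Since this transition is unitriangular (up to nonzero scalars) with respect to $\Deg$ against the basis $\{[C_q(\lambda)]\}_{\lambda\in\mathfrak{P}_1(\Gamma)}$ of $\H_1(C_1(\mathscr{P}))$, the map $\Phi$ carries a basis to a basis, hence is an isomorphism of algebras.

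The main obstacle, and the step needing care, is the nonvanishing of the diagonal structure constants $\psi_{Y^{\cdot}Z^{\cdot}}^{Y^{\cdot}\oplus Z^{\cdot}}(1)$ of $\H_1(C_1(\mathscr{P}))$: unlike the non-degenerate case, where $F_{Y^{\cdot}Z^{\cdot}}^{Y^{\cdot}\oplus Z^{\cdot}}\ge1$ is automatic, at $q=1$ one must exclude a spurious zero. This is handled by the classical computation of these ``diagonal'' Hall polynomials — for objects that are direct sums of copies of a fixed family of indecomposables one has $\psi_{Y^{\cdot}Z^{\cdot}}^{Y^{\cdot}\oplus Z^{\cdot}}(x)\equiv\prod_i\binom{a_i+b_i}{a_i}\pmod{x-1}$ — and this is precisely the input that makes the $\Deg$-associated graded of $\H_1(C_1(\mathscr{P}))$ a (rescaled) polynomial algebra matching $S(\tilde{\mathfrak{n}})=\mathrm{gr}\,U(\tilde{\mathfrak{n}})$. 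A minor bookkeeping point is that $\Deg$ is not finite-to-one on $\mathfrak{P}_1(\Gamma)$ (the projective–injective classes $K_{P_i}$ have vanishing homology), so the unitriangularity argument should be run degree by degree after refining $\Deg$ by the total multiplicity $\sum_i\lambda(i)$; this is routine.
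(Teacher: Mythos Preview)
Your approach is correct and is essentially the paper's: obtain the map $U(\tilde{\mathfrak{n}})\to\H_1(C_1(\mathscr{P}))$ from the universal property (the paper simply says ``$\tilde{\mathfrak{n}}$ is a Lie subalgebra of $\H_1(C_1(\mathscr{P}))$''), then conclude bijectivity by matching PBW bases via Theorem~\ref{PBW}. You are in fact more careful than the paper's two-line argument, which invokes Theorem~\ref{PBW} (stated for $\H(C_1(\mathscr{P}))$, not $\H_1$) without commenting on why the leading coefficients $a_{X^\cdot}$ survive the specialization $q\to 1$ or on the failure of $\deg$ to separate acyclic complexes; your Riedtmann-style computation $\psi_{Y^\cdot Z^\cdot}^{Y^\cdot\oplus Z^\cdot}(1)=\prod_i\binom{a_i+b_i}{a_i}$ and the refinement of the filtration by $\sum_i\lambda(i)$ are exactly the standard fixes.
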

\begin{proof}
Since $\H_1(C_1(\mathscr{P}))$ is an associative algebra with Lie subalgebra $\tilde{\mathfrak{n}}$, there is a unique homomorphism $f$ of algebras from the universal enveloping algebra $U(\tilde{\mathfrak{n}})$ to $\H_1(C_1(\mathscr{P}))$ such that the diagram \begin{equation}\label{f}\xymatrix{~~\tilde{\mathfrak{n}}~~\ar@{^{(}->}[r]\ar@{^{(}->}[rd]&U(\tilde{\mathfrak{n}})\ar@{-->}[d]^f\\
&\H_1(C_1(\mathscr{P}))}\end{equation} commutes. By Theorem \ref{PBW} and the PBW-basis theorem of universal enveloping algebra, we conclude that $f$ is an isomorphism.
\end{proof}

\section{Minimal generators of $\H(C_1(\mathscr{P}))$}
From now onwards until the end of the paper, let $Q$ be always a Dynkin quiver.
In this section, we will give a minimal set of generators for the Hall algebra $\H(C_1(\mathscr{P}))$. Then combining with \cite[Theorem 4.3]{RSZ}, we establish a relation between the degenerate Hall algebras $\H_1(A)$ and $\H_1(C_1(\mathscr{P}))$.
\begin{theorem}\label{main result}
$\{[C_{P_i}],[K_{P_i}]~|~1\leq i\leq n\}$ is a minimal set of generators for the Hall algebra $\H(C_1(\mathscr{P}))$.
\end{theorem}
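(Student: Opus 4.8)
The plan is to prove the two halves of the statement—that the set generates $\H(C_1(\mathscr{P}))$, and that it is minimal—separately, leaning on the PBW-basis theorem (Theorem \ref{PBW}) and the filtered structure constructed in its proof.

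\textbf{Generation.} First I would show that every $[C_M]$ with $M$ indecomposable and every $[K_P]$ with $P$ indecomposable projective lies in the subalgebra $\mathcal{B}$ generated by $\{[C_{P_i}],[K_{P_i}]\}$; by Theorem \ref{PBW} (or just the Krull--Schmidt description of objects) this is enough. The $K$-part is easy: since each $K_{P_i}$ is projective-injective, $K_P=\bigoplus K_{P_i}^{\,a_i}$ and the product $[K_{P_1}]^{a_1}\cdots[K_{P_n}]^{a_n}$ equals a nonzero scalar times $[K_P]$ plus terms supported on objects with strictly smaller $\deg$ in the order of the proof of Theorem \ref{PBW}; but actually the $K_{P_i}$ have zero homology, so extensions among them split and the product is literally proportional to $[K_P]$. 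For the $C_M$-part, I would induct on $\deg M:=\bigl(\Dim H_0(C_M),\dim_k\End_A M\bigr)=(\Dim M,\dim_k\End_A M)$ using the leading-term formula
\[
[X^\cdot][Y^\cdot]=a\,[X^\cdot\oplus Y^\cdot]+\sum_{\deg Z^\cdot<\deg(X^\cdot\oplus Y^\cdot)}a_{Z^\cdot}[Z^\cdot],\qquad a\neq 0,
\]
established in the proof of Theorem \ref{PBW}. The base case is the simple projectives themselves (and objects of homology dimension a simple root). For the inductive step I need: given an indecomposable non-projective $C_M$, a short exact sequence $0\to N\to L\to M'\to 0$ of $A$-modules with $M$ a summand of $L$ and $\Dim M'+\Dim N=\Dim M$ coordinatewise — this is exactly where Theorem \ref{cpg}, i.e. \cite[Theorem 4.3]{RSZ}, enters: it tells us $\tilde{\mathfrak n}^+$ is generated by the $[C_{P_i}]$, which in Hall-algebra language means that for indecomposable non-projective $M$ the element $[C_M]$ is, modulo lower $\deg$ terms, an iterated bracket (hence iterated product) of the $[C_{P_i}]$. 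Concretely I would transport the Lie-generation statement of Theorem \ref{cpg} through the degenerate Hall algebra: in $\H_1(C_1(\mathscr P))\cong U(\tilde{\mathfrak n})$ the classes $[C_M]$ are polynomials in the $[C_{P_i}]$, and because $\H_1$ and $\H$ share the same underlying space with multiplication agreeing up to lower-$\deg$ corrections, the same monomials express $[C_M]$ in $\H(C_1(\mathscr P))$ up to lower-$\deg$ error, which the induction absorbs. Combining the $C$- and $K$-parts, and using that the product of a $C$-monomial with a $K$-monomial has leading term $[C(\lambda)]$ for arbitrary $\lambda$ (again by the formula above), gives generation.

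\textbf{Minimality.} Here I would argue that no generator can be dropped. For each $i$, consider the quotient or the graded piece of $\H(C_1(\mathscr P))$ picking out the "degree" $\beta_i$ (where $\beta_i=\Dim P_i$) in the $\mathbb{N}^n$-filtration by $\Dim H_0(-)$. A product of two or more generators, each of which has $\Dim H_0\in\{\Dim P_1,\dots,\Dim P_n\}$ (note $\Dim H_0(K_{P_j})=0$), lands in filtration degree a sum of two or more of the $\Dim P_j$; since $Q$ is Dynkin the $\Dim P_j$ are linearly independent positive roots, so such a sum never equals a single $\Dim P_i$ unless one factor is $\Dim P_i$ and the rest have $\Dim H_0=0$, i.e. are words in the $[K_{P_j}]$. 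Thus modulo lower filtration, the only way to produce something with $\Dim H_0=\Dim P_i$ and nonzero $H_0$ is $[C_{P_i}]$ times a product of $[K_{P_j}]$'s; in particular $[C_{P_i}]$ is not in the subalgebra generated by the other $2n-1$ elements. For $[K_{P_i}]$: it has $\Dim H_0=0$, and any product of generators with all factors of homology-dimension zero is a word in the $[K_{P_j}]$ only; so it suffices to see $[K_{P_i}]$ is not a polynomial in $\{[K_{P_j}]\}_{j\neq i}$, which is immediate since those classes generate exactly the polynomial algebra on the $n$ projective-injectives (the subcategory of $C_1(\mathscr P)$ with zero homology is semisimple with simples $K_{P_j}$), and a polynomial ring on $n$ variables is not generated by $n-1$ of them.

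\textbf{Main obstacle.} The delicate point is the inductive step of generation: one must ensure that the iterated products of $[C_{P_i}]$ dictated by \cite[Theorem 4.3]{RSZ} really do have $[C_M]$ as leading term (with correct $\deg$) rather than producing only lower-$\deg$ material, and that the error terms are genuinely of strictly smaller $\deg$ so the induction closes. This amounts to checking that the passage from the Lie bracket in $\tilde{\mathfrak n}^+$ to the associative product in $\H(C_1(\mathscr P))$ preserves the $\deg$-leading behaviour — equivalently, that the associated graded of $\H(C_1(\mathscr P))$ for the $\deg$-filtration of Theorem \ref{PBW} is the symmetric/enveloping algebra in which the Lie-theoretic computation takes place. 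I would handle this by making the comparison between $\H$ and $\H_1(C_1(\mathscr P))\cong U(\tilde{\mathfrak n})$ explicit on associated graded, so that Theorem \ref{cpg} can be quoted verbatim.
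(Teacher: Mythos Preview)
Your generation argument has a real gap. You propose to induct on $\deg M=(\Dim M,\dim_k\End_A M)$ using the leading-term formula from Theorem~\ref{PBW}, and to import the expression of $[C_M]$ as a polynomial in the $[C_{P_i}]$ from the degenerate algebra $\H_1(C_1(\mathscr{P}))$. But the leading term of any monomial $[C_{P_{i_1}}]\cdots[C_{P_{i_r}}]$ is $[C_{P_{i_1}\oplus\cdots\oplus P_{i_r}}]$, which has \emph{larger} $\Dim H_0$ than $C_M$ whenever $M$ is non-projective. So an indecomposable non-projective $[C_M]$ is never a leading term; it only shows up among the lower-$\deg$ debris. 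Your proposed fix via the associated graded cannot work: for this filtration the associated graded product is just direct sum, and the subalgebra generated by the $[C_{P_i}]$ there is exactly the span of $[C_P]$ with $P$ projective. Equivalently, the cancellations in $\H_1$ that kill the direct-sum leading terms rely on the $q=1$ coefficients and do not persist at general $q$: for $a_{ij}=1$ one has $[C_{P_j}][C_{P_i}]-[C_{P_i}][C_{P_j}]=(1-q)[C_{P_i\oplus P_j}]+[C_{M_{ij}}]$ in $\H$, and the first term has \emph{higher} $\deg$, not lower. Your induction hypothesis therefore cannot absorb it.

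The paper avoids this by reversing the order and by choosing the right product. For indecomposable non-projective $M$ with minimal projective resolution $0\to\Omega\to P\to M\to 0$, it computes the single product $[C_\Omega][C_P]$ (both factors already lie in the subalgebra by Lemmas~\ref{cp}--\ref{cps}). Since $\Ext^1_{C_1(\mathscr{P})}(C_\Omega,C_P)\cong\Hom_A(\Omega,P)$, each $f$ gives an extension $Z^\cdot(f)$ with $\Dim H_0(Z^\cdot(f))=\Dim M+2\Dim\Ker f\geq\Dim M$, and Lemma~\ref{bkfj} shows that among the terms with minimal $\Dim H_0$ the only indecomposable one is $C_M$. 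One then uses the \emph{opposite} order on $(\Dim H_0(X^\cdot),m(X^\cdot))$, where $m$ counts indecomposable summands, so that $[C_M]$ is the \emph{maximal} term of $[C_\Omega][C_P]$ and every other term (including $[C_P\oplus C_\Omega]$) is strictly smaller. Induction on this reversed degree then extracts $[C_M]$. This is a direct argument independent of Theorem~\ref{cpg}.

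On minimality: the paper does not actually prove it. Your idea is right in spirit, but the $\Dim H_0$-filtration you invoke is inadequate, since products can strictly \emph{decrease} $\Dim H_0$ (e.g.\ $[C_{P_i}]^2$ contains $[K_{P_i}]$). The clean argument uses the $K(\mathscr{P})$-\emph{grading} of $\H(C_1(\mathscr{P}))$ by the class of the underlying projective module, which is genuinely additive under multiplication: $[C_{P_i}]$ sits alone in degree $\hat{P_i}$, while in degree $2\hat{P_i}$ the subalgebra omitting $[K_{P_i}]$ contributes only the line through $[C_{P_i}]^2=(q+1)[C_{2P_i}]+[K_{P_i}]$, which does not contain $[K_{P_i}]$.
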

By Theorem \ref{PBW}, we only prove that every $[C_M]$ corresponding to indecomposable non-projective $A$-module $M$ can be generated by the elements in $\{[C_{P_i}],[K_{P_i}]~|~1\leq i\leq n\}$.

Let $M$ be an indecomposable non-projective $A$-module, and fix the minimal projective resolution of $M$: $$0\longrightarrow \Omega\longrightarrow P\longrightarrow M\longrightarrow 0.$$

Before proving Theorem \ref{main result}, we first give the following Lemmas.
\begin{lemma}\label{cp}
Let $P$ be an indecomposable projective $A$-module. Then for any positive integer $m$,
$$[C_{mP}][C_P]=(\frac{q^{m+1}-1}{q-1})[C_{(m+1)P}]+\frac{1}{q^{m-1}}[C_{(m-1)P}][K_P].$$
\end{lemma}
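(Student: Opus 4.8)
The plan is to compute both products $[C_{mP}][C_P]$ and $[C_{(m-1)P}][K_P]$ directly via Hall numbers, using the fact that $P$ is an indecomposable projective module so that $\Omega=0$ in its projective resolution and $C_P=C_{\mathrm{Id}_\Omega}$-type complexes are well understood. First I would identify, using Lemma~\ref{kuozhang}, that $\Ext^1_{C_1(\mathscr{P})}(C_P,C_{mP})\cong\Hom_A(P,mP)\oplus\Ext^1_A(P,mP)=\Hom_A(P,mP)$ since $P$ is projective; in particular $H_0(C_P)=P$ and $H_0(C_{mP})=mP$, and the homology of any extension of $C_P$ by $C_{mP}$ is governed by the long exact sequence~(\ref{czh}). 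The key observation is that the middle term $Z^\cdot$ of a short exact sequence $0\to C_{mP}\to Z^\cdot\to C_P\to 0$ in $C_1(\mathscr{P})$ has $\Dim H_0(Z^\cdot)=\Dim H_0(C_P)+\Dim H_0(C_{mP})-2\Dim Q$ where $Q\cong\Ker f$, and $f\in\Hom_A(P,mP)$. Since $P$ is indecomposable projective, every nonzero $f\colon P\to mP$ is a split mono (its image is a direct summand isomorphic to $P$), so $Q=\Coker(\text{on homology})$ has $\Dim Q = 0$ when $f=0$ and $\Dim Q=\Dim P$ when $f\neq 0$. Thus the only two possible middle terms are $C_{(m+1)P}$ (when $f=0$) and $C_{(m-1)P}\oplus K_P$ (when $f\neq 0$, using Lemma~\ref{indec. obj.s}(1) to recognize the resulting complex).

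Next I would pin down the two Hall coefficients. For the coefficient of $[C_{(m+1)P}]$: here the relevant extensions are exactly those with $f=0$, i.e. genuine extensions of the underlying complexes with split homology, which via the Riedtmann--Peng formula reduces to counting in $C_1(\mathscr{P})$; the factor $\frac{q^{m+1}-1}{q-1}$ should emerge as $|\mathbb{P}^m(\mathbb{F}_q)|$, reflecting the choices of how the new copy of $C_P$ embeds into $C_{(m+1)P}$ — concretely, $F_{C_P,\,C_{mP}}^{C_{(m+1)P}}$ equals the number of $C_P$-submodules with quotient $C_{mP}$, which by a standard direct-sum count equals $\frac{q^{m+1}-1}{q-1}$. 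For the coefficient of $[C_{(m-1)P}\oplus K_P]$ I would use the Riedtmann--Peng formula
$$F_{C_P,\,C_{mP}}^{C_{(m-1)P}\oplus K_P}=\frac{|\Ext^1_{C_1(\mathscr{P})}(C_P,C_{mP})_{C_{(m-1)P}\oplus K_P}|}{|\Hom_{C_1(\mathscr{P})}(C_P,C_{mP})|}\cdot\frac{|\Aut(C_{(m-1)P}\oplus K_P)|}{|\Aut C_P|\cdot|\Aut C_{mP}|},$$
counting the nonzero $f$'s modulo the automorphism action. Then I would observe that $[C_{(m-1)P}\oplus K_P]=a^{-1}[C_{(m-1)P}][K_P]+(\text{lower terms})$ for a scalar $a$; but since $K_P$ is projective-injective, $\Ext^1(K_P,C_{(m-1)P})=\Ext^1(C_{(m-1)P},K_P)=0$, so in fact $[C_{(m-1)P}][K_P]=[C_{(m-1)P}\oplus K_P]$ exactly. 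Hence the claimed coefficient $\frac{1}{q^{m-1}}$ of $[C_{(m-1)P}][K_P]$ is precisely $F_{C_P,\,C_{mP}}^{C_{(m-1)P}\oplus K_P}$, and I would verify this equals $q^{-(m-1)}$ by plugging in $|\End_A(P)|=q$ (as $P$ is indecomposable over a field, $\End_A P$ is local with residue field $\mathbb{F}_q$, and for $P$ projective indecomposable of a Dynkin path algebra $\End_A P=\mathbb{F}_q$), computing $|\Hom_{C_1(\mathscr{P})}(C_P,C_{mP})|$, and counting automorphism group orders of direct sums of copies of $C_P$ and $K_P$ — these are all standard $\mathrm{GL}$-type counts over $\mathbb{F}_q$.

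The main obstacle, I expect, is the bookkeeping in the second Hall coefficient: correctly computing $|\Hom_{C_1(\mathscr{P})}(C_P, C_{mP})|$, $|\Aut C_{mP}|$, $|\Aut(C_{(m-1)P}\oplus K_P)|$, and the size of the set of nonzero classes in $\Ext^1_{C_1(\mathscr{P})}(C_P,C_{mP})$ realizing the middle term $C_{(m-1)P}\oplus K_P$, and checking that all the powers of $q$ cancel to leave exactly $q^{-(m-1)}$. A clean way to organize this is to note that $C_1(\mathscr{P})$ restricted to complexes built from copies of a single indecomposable projective $P$ with $\Omega=0$ is equivalent to the category of $1$-cyclic complexes of $\mathbb{F}_q$-vector spaces (i.e. $\mathbb{F}_q[x]/(x^2)$-modules), where all these numbers are classical; one then transports the computation there. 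I would also double-check the identity on small cases, e.g. $m=1$, where it should read $[C_P]^2=(q+1)[C_{2P}]+q^{0}[C_0][K_P]=(q+1)[C_{2P}]+[K_P]$, consistent with the general formula, as a sanity check before writing out the general count.
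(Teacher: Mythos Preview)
Your overall strategy matches the paper's---identify the two possible middle terms via Lemma~\ref{kuozhang}, compute the two Hall numbers, and then rewrite $[C_{(m-1)P}\oplus K_P]$ as a multiple of $[C_{(m-1)P}][K_P]$---but there is a genuine error in the last step. You assert that because $K_P$ is projective--injective and hence $\Ext^1$ vanishes in both directions, one has $[C_{(m-1)P}][K_P]=[C_{(m-1)P}\oplus K_P]$. This is false: vanishing of $\Ext^1$ only guarantees that the unique middle term is the direct sum, not that the Hall number is~$1$. In fact $F_{C_{(m-1)P},K_P}^{C_{(m-1)P}\oplus K_P}$ counts sub\-objects of $C_{(m-1)P}\oplus K_P$ isomorphic to $K_P$ with quotient $C_{(m-1)P}$, and (working in the $k[x]/(x^2)$-module model you yourself suggest) this count is $q^{m-1}$, coming from the nonzero $\Hom(K_P,C_{(m-1)P})$. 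So $[C_{(m-1)P}][K_P]=q^{m-1}[C_{(m-1)P}\oplus K_P]$, exactly as in the paper's proof.

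This error propagates: you then expect the Hall number $F_{\,\cdot\,,\,\cdot\,}^{C_{(m-1)P}\oplus K_P}$ to equal $q^{-(m-1)}$, which is impossible since Hall numbers are nonnegative integers---that alone should have flagged the mistake. The correct bookkeeping is that this Hall number equals~$1$ and the factor $q^{-(m-1)}$ in the statement arises entirely from rewriting $[C_{(m-1)P}\oplus K_P]$ as $q^{-(m-1)}[C_{(m-1)P}][K_P]$. A secondary point: you set up the computation for $[C_P][C_{mP}]$ (extensions $0\to C_{mP}\to Z^\cdot\to C_P\to 0$, Hall numbers $F_{C_P,C_{mP}}^{Z^\cdot}$) rather than $[C_{mP}][C_P]$; it happens that the two products coincide here because the relevant subobject counts are symmetric, but you should either compute in the correct order or justify the symmetry.
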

\begin{proof}By Lemma \ref{kuozhang},
$\Ext^1_{C_1(\mathscr{P})}(C_{mP},C_P)\cong\Hom_A(mP,P)\cong k^m$.

Let $f=(f_1,\cdots,f_m)\in\Hom_A(mP,P)$ be nonzero. We assume that $f_i\neq0$ for some $1\leq i\leq m$, then $f_i$ is an isomorphism. Thus, $f$ is a splitting epimorphism, and $\Ker f\cong (m-1)P$. Consider the extension corresponding to $f$:
$0\rightarrow C_P\rightarrow X^{\cdot}\rightarrow C_{mP}\rightarrow0$, then $X^\cdot$ must be isomorphic to $C_{(m-1)P}\oplus K_P$. Hence, $$[C_{mP}][C_P]=F_{C_{mP}C_P}^{C_{(m+1)P}}[C_{(m+1)P}]+F_{C_{mP}C_P}^{C_{(m-1)P}\oplus K_P}[C_{(m-1)P}\oplus K_P].$$
By Riedtmann--Peng formula, it is easy to see that $$F_{C_{mP}C_P}^{C_{(m+1)P}}=\frac{q^{m+1}-1}{q-1}~~~\text{and}~~~ F_{C_{mP}C_P}^{C_{(m-1)P}\oplus K_P}=1.$$
Since $[C_{(m-1)P}][K_P]=F_{C_{(m-1)P}K_P}^{C_{(m-1)P}\oplus K_P}[C_{(m-1)P}\oplus K_P]=q^{m-1}[C_{(m-1)P}\oplus K_P]$, we complete the proof.
\end{proof}

Let $(i_1,i_2,\cdots,i_n)$ be a permutation of $(1,2,\cdots,n)$ such that $\Hom_A(P_{i_s},P_{i_t})\neq 0$ implies that $s\geq t$.
\begin{lemma}\label{cps}
Let $a_i\in\mathbb{N}, 1\leq i\leq n$. Then
for any projective $A$-module $P\cong\oplus_{i=1}^na_iP_i$, we have that
$$[C_P]=\sum\limits_{s_{i_j},t_{i_j},1\leq j\leq n:s_{i_j}+2t_{i_j}=a_{i_j}}a_{t_{i_1
},t_{i_2},\cdots,t_{i_n}}^{s_{i_1},s_{i_2},\cdots,s_{i_n}}\prod_{l=1}^n[C_{P_{i_l}}]^{s_{i_l}}[K_{P_{i_l}}]^{t_{i_l}}$$
for some $a_{t_{i_1},t_{i_2},\cdots,t_{i_n}}^{s_{i_1},s_{i_2},\cdots,s_{i_n}}\in\mathbb{Q}$.
\end{lemma}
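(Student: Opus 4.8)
The plan is to reduce the statement to the case of a single indecomposable projective summand and then iterate Lemma \ref{cp}. Two preliminary facts drive the argument. First, since $C_N=(N,0)$ for a projective module $N$ (so that $H_0(C_N)=N$) and $H_0(K_P)=0$, Lemma \ref{kuozhang} gives, for $l<m$,
$$\Ext^1_{C_1(\mathscr{P})}(C_{a_{i_l}P_{i_l}},C_{a_{i_m}P_{i_m}})\cong\Hom_A(a_{i_l}P_{i_l},a_{i_m}P_{i_m})=0,$$
where the vanishing is precisely the defining property of the permutation $(i_1,\dots,i_n)$ (and $\Ext^1_A$ drops out by projectivity). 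Second, $\Ext^1_{C_1(\mathscr{P})}(C_{P_i},K_{P_i})=\Ext^1_{C_1(\mathscr{P})}(K_{P_i},C_{P_i})=0$ for the same reason, and a direct computation gives $\Hom_{C_1(\mathscr{P})}(C_{P_i},K_{P_i})\cong\End_A P_i\cong\Hom_{C_1(\mathscr{P})}(K_{P_i},C_{P_i})$; since moreover these extension groups vanish, the Riedtmann--Peng formula shows that $[C_{P_i}]$ and $[K_{P_i}]$ commute in $\H(C_1(\mathscr{P}))$.

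The first step is to show that the ordered product $[C_{a_{i_1}P_{i_1}}][C_{a_{i_2}P_{i_2}}]\cdots[C_{a_{i_n}P_{i_n}}]$ equals a nonzero scalar multiple of $[C_P]$. Whenever $\Ext^1_{C_1(\mathscr{P})}(X^\cdot,Y^\cdot)=0$, every extension of $X^\cdot$ by $Y^\cdot$ splits, so $[X^\cdot][Y^\cdot]=F_{X^\cdot Y^\cdot}^{X^\cdot\oplus Y^\cdot}[X^\cdot\oplus Y^\cdot]$ with $F_{X^\cdot Y^\cdot}^{X^\cdot\oplus Y^\cdot}\neq 0$. Applying this successively---at the $j$-th stage to the pair $(\bigoplus_{l\le j}C_{a_{i_l}P_{i_l}},\,C_{a_{i_{j+1}}P_{i_{j+1}}})$, whose $\Ext^1$ vanishes summand by summand by the first fact above---collapses the whole product to a scalar multiple of $[\bigoplus_{l=1}^nC_{a_{i_l}P_{i_l}}]=[C_P]$ (using $\bigoplus_lC_{a_{i_l}P_{i_l}}=(P,0)=C_P$). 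Thus it remains only to expand each factor $[C_{a_{i_l}P_{i_l}}]$ in the required shape.

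The second step treats one block: for a fixed indecomposable projective $P$, I show by induction on $m$ that $[C_{mP}]=\sum_{s+2t=m}\gamma_{s,t}[C_P]^s[K_P]^t$ with $\gamma_{s,t}\in\mathbb{Q}$. The cases $m=0,1$ are immediate. For $m\ge 2$, rewrite Lemma \ref{cp} (with its $m$ replaced by $m-1$) as
$$[C_{mP}]=\frac{q-1}{q^{m}-1}\,[C_{(m-1)P}][C_P]-\frac{q-1}{(q^{m}-1)q^{m-2}}\,[C_{(m-2)P}][K_P],$$
substitute the inductive expressions for $[C_{(m-1)P}]$ and $[C_{(m-2)P}]$, and use the commutativity of $[C_P]$ with $[K_P]$ to move every $[C_P]$ to the left; each resulting monomial $[C_P]^{s}[K_P]^{t}$ then has $s+2t=m$. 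Inserting these one-block expansions into the ordered product of the first step and multiplying out writes $[C_P]$ as a $\mathbb{Q}$-linear combination of the monomials $\prod_{l=1}^n[C_{P_{i_l}}]^{s_{i_l}}[K_{P_{i_l}}]^{t_{i_l}}$ with $s_{i_l}+2t_{i_l}=a_{i_l}$, which is exactly the asserted form---no further rearrangement is needed, since this monomial is already the concatenation of the blocks in the order $l=1,\dots,n$.

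The step I expect to be the main obstacle is the first one: confirming that the ordered product of the classes $[C_{a_{i_l}P_{i_l}}]$ genuinely collapses to a single term $c\,[C_P]$ with $c\neq 0$ and no lower-degree remainder, which relies entirely on the $\Ext^1$-vanishing forced by the choice of permutation. The commutation of $[C_{P_i}]$ with $[K_{P_i}]$ is the only other ingredient that is not pure bookkeeping of exponents and rational coefficients.
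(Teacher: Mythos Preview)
Your proposal is correct and follows essentially the same approach as the paper: the paper also first uses Lemma~\ref{cp} and induction on $m$ to write $[C_{mP}]=\sum_{s+2t=m}a_t^s[C_P]^s[K_P]^t$, and then invokes the vanishing $\Hom_A(a_{i_s}P_{i_s},a_{i_t}P_{i_t})=0$ for $s<t$ to collapse the ordered product of the blocks into $[C_P]$. You have supplied more detail than the paper (the commutation of $[C_{P_i}]$ with $[K_{P_i}]$, and the explicit Ext computation via Lemma~\ref{kuozhang}), but the skeleton is identical.
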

\begin{proof}
Using Lemma \ref{cp}, by induction on $m$, we obtain that
$$[C_{mP}]=\sum\limits_{s,t:s+2t=m}a_t^s[C_P]^s[K_P]^t,~~\text{for some}~a_t^s\in\mathbb{Q}.$$
By the fact that $[C_P]=[C_{a_{i_1}P_{i_1}}\oplus C_{a_{i_2}P_{i_2}}\oplus\cdots\oplus C_{a_{i_n}P_{i_n}}]$ and $\Hom_A(a_{i_s}P_{i_s},a_{i_t}P_{i_t})=0$ for any $s<t$, we complete the proof.
\end{proof}
\begin{lemma}\label{bkfj}
Let $0\rightarrow C_P\rightarrow Z^\cdot\rightarrow C_\Omega\rightarrow 0$ be a short exact sequence with $Z^\cdot\in C_1(\mathscr{P})$ indecomposable. Then there exists a monomorphism $f:\Omega\rightarrow P$ such that $\Coker f\cong H_0(Z^\cdot)$. Moreover, $Z^\cdot\cong C_M$.
\begin{proof}
Clearly, $Z^\cdot$ is not projective, since $P$ is not isomorphic to $\Omega$. Hence, we assume that $Z^\cdot=C_L$ for some indecomposable $A$-module $L$. Considering the long exact sequence in homology
$$\xymatrix{P\ar[r]&L\ar[rr]\ar@{->>}[rd]&&\Omega\ar[r]^-{f}&P\ar[rr]\ar@{->>}[rd]&&L\ar[r]&\Omega,\\
&&\Ker f\ar@{^{(}->}[ru]&&&\Coker f\ar@{^{(}->}[ru]&&}$$
we obtain the short exact sequence $0\rightarrow\Coker f\rightarrow L \rightarrow\Ker f\rightarrow 0$, and $L\cong\Ker f\oplus\Coker f$, since $\Ker f\leq\Omega$ is projective. We conclude that $\Ker f=0$ or $\Coker f=0$, since $L$ is indecomposable.

Suppose that $\Coker f=0$, that is, $f$ is an epimorphism. Then we have the short exact sequence $0\rightarrow L \rightarrow \Omega \rightarrow P\rightarrow 0$, and $P\oplus L\cong\Omega$, thus $L$ is projective. By the short exact sequence $0\rightarrow C_P\rightarrow Z^\cdot\rightarrow C_\Omega\rightarrow 0$, we obtain that $L\cong P\oplus\Omega$. This is a contradiction, since $P, \Omega$ are nonzero, and $L$ is indecomposable. Hence, $\Ker f=0$ and $\Coker f\cong L\cong H_0(Z^\cdot)$. Since $\Dim L=\Dim H_0(Z^\cdot)=\Dim P-\Dim\Omega=\Dim M$, and $L, M$ are both indecomposable, we obtain that $L\cong M$, and thus $Z^\cdot=C_L\cong C_M$.

\end{proof}
\end{lemma}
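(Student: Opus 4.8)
The plan is to pin down $Z^\cdot$ by analysing the homology long exact sequence attached to the given short exact sequence, exactly of the periodic type (\ref{czh}) used in the proof of Theorem \ref{PBW}. Since $C_P$ and $C_\Omega$ carry zero differential we have $H_0(C_P)=P$ and $H_0(C_\Omega)=\Omega$, so with $Y^\cdot=C_P$ and $X^\cdot=C_\Omega$ the six-term sequence becomes
$$\cdots\xrightarrow{f}P\xrightarrow{\varphi}H_0(Z^\cdot)\xrightarrow{\psi}\Omega\xrightarrow{f}P\xrightarrow{\varphi}H_0(Z^\cdot)\xrightarrow{\psi}\Omega\xrightarrow{f}\cdots,$$
where $f\colon\Omega\to P$ is the connecting homomorphism. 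From exactness I would read off $\im\varphi\cong P/\im f=\Coker f$ and $H_0(Z^\cdot)/\im\varphi\cong\im\psi=\Ker f$, which assemble into the short exact sequence of $A$-modules
$$0\longrightarrow\Coker f\longrightarrow H_0(Z^\cdot)\longrightarrow\Ker f\longrightarrow 0.$$

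Next I would observe that $\Ker f$ is a submodule of the projective module $\Omega$, hence projective since $A$ is hereditary; therefore the displayed sequence splits and $H_0(Z^\cdot)\cong\Coker f\oplus\Ker f$. To identify $Z^\cdot$ itself I invoke Lemma \ref{indec. obj.s}(1): an indecomposable object of $C_1(\mathscr{P})$ is either $C_L$ for some indecomposable $A$-module $L$, or a projective-injective $K_R$. The possibility $Z^\cdot\cong K_R$ is excluded because then $H_0(K_R)=0$ would force $\Ker f=\Coker f=0$, making $f$ an isomorphism and hence $\Omega\cong P$, impossible since $M$ is nonzero and non-projective. Thus $Z^\cdot\cong C_L$ with $L$ indecomposable and $H_0(Z^\cdot)\cong L$.

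With $L\cong\Coker f\oplus\Ker f$ indecomposable, exactly one summand is nonzero, and I would rule out $\Coker f=0$ as follows: then $f$ is epic, so $L\cong\Ker f$ is projective, whence $C_L=(L,0)$ has zero differential; the original short exact sequence then reduces to a sequence of $A$-modules $0\to P\to L\to\Omega\to 0$, which splits because $\Omega$ is projective, giving $L\cong P\oplus\Omega$—decomposable, contradicting indecomposability of $L$. Hence $\Ker f=0$, so $f$ is a monomorphism and $H_0(Z^\cdot)\cong L\cong\Coker f$, which is the first assertion.

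Finally, for the ``moreover'' clause I would compare dimension vectors: from the minimal projective resolution $0\to\Omega\to P\to M\to 0$ one gets $\Dim L=\Dim\Coker f=\Dim P-\Dim\Omega=\Dim M$. As $Q$ is Dynkin, Gabriel's theorem makes the dimension vector a complete invariant of indecomposable $A$-modules, so $L$ and $M$ indecomposable with $\Dim L=\Dim M$ forces $L\cong M$, and therefore $Z^\cdot\cong C_L\cong C_M$. The main obstacle I anticipate is the bookkeeping in the periodic homology sequence—correctly locating the connecting map $f$ and extracting the kernel/cokernel pieces—together with the subtle point that ruling out $\Coker f=0$ must use the complex structure (the vanishing differential on a projective $L$), and not homology alone.
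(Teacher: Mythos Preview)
Your proof is correct and follows essentially the same route as the paper's: both set up the periodic homology long exact sequence, extract the short exact sequence $0\to\Coker f\to L\to\Ker f\to 0$, split it via heredity, and then use indecomposability plus Gabriel's theorem to finish. The only cosmetic differences are that you rule out $Z^\cdot\cong K_R$ via $H_0(K_R)=0$ forcing $f$ to be an isomorphism (the paper states this more tersely), and in the $\Coker f=0$ case you pass directly to a split sequence $0\to P\to L\to\Omega\to 0$ in $\mod A$, whereas the paper first records $0\to L\to\Omega\to P\to 0$ from the homology sequence before reaching the same contradiction $L\cong P\oplus\Omega$.
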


\noindent\textbf{\emph{Proof of Theorem \ref{main result}:}}
By Lemma \ref{kuozhang}, we have that
$$\Ext^1_{C_1(\mathscr{P})}(C_\Omega,C_P)\cong\Hom_A(\Omega,P).$$
For any $f\in\Hom_A(\Omega,P)$, we consider the corresponding extension: \begin{equation}\label{zf}0\longrightarrow C_P\longrightarrow Z^\cdot(f)\longrightarrow C_\Omega\longrightarrow 0.\end{equation}
As before, we know that $\Dim H_0(Z^\cdot(f))=\Dim P+\Dim \Omega-2(\Dim \Omega-\Dim\Ker f)=\Dim M+2\Dim\Ker f\geq \Dim M.$
Hence, if $f$ is not injective, then $\Dim H_0(Z^\cdot(f))>\Dim M$; if $f$ is injective and $Z^\cdot(f)\ncong C_M$, then by Lemma \ref{bkfj}, $Z^\cdot(f)$ is decomposable.

For any $X^\cdot\in C_1(\mathscr{P})$, set $\deg X^\cdot:=(\Dim H_0(X^\cdot),m(X^\cdot))$, where $m(X^\cdot)$ is the number of indecomposable direct summands of $X^\cdot$.

Consider the opposite of the lexicographical order on $\mathbb{N}^n\times\mathbb{N}$: \begin{equation}\label{fx}(\alpha_1,d_1)\leq(\alpha_2,d_2)\Longleftrightarrow``\alpha_1>\alpha_2"~ \text{or}~``\alpha_1=\alpha_2,d_1\geq d_2".\end{equation}
Hence, for any injective $f\in\Hom_A(\Omega,P)$ such that $Z^\cdot(f)\ncong C_M$, or any non-injective $f\in\Hom_A(\Omega,P)$, we have that $\deg Z^\cdot(f)<\deg C_M$.

Let $s$ and $t$ be the number of indecomposable direct summands of $P$ and $\Omega$, respectively. Then for any $0\rightarrow C_P\rightarrow Z^\cdot(f)\rightarrow C_\Omega\rightarrow 0$ as in (\ref{zf}), $Z^\cdot(f)$ has at most $s+t$ indecomposable direct summands.
If $f=0$, then $Z^\cdot(f)\cong C_P\oplus C_\Omega$, in this case, $\Dim H_0(Z^\cdot(f))$ and $m(Z^\cdot(f))$ are both maximal, and thus $\deg Z^\cdot(f)$ is minimal under the order defined in (\ref{fx}). That is, this case is the starting point of our induction.
$$[C_\Omega][C_P]=a[C_P\oplus C_\Omega]+b[C_M]+\sum\limits_{[Z^\cdot]:\deg Z^\cdot<\deg C_M}c_{Z^\cdot}[Z^\cdot].$$
Clearly, $b\neq0$. By induction on $\deg C_M$ together with Lemma \ref{cps}, we complete the proof.

\section{Relations between degenerate Hall algebras $\H_1(A)$ and $\H_1(C_1(\mathscr{P}))$}
In this section, as a first application of Theorem \ref{main result}, we establish a relation between degenerate Hall algebras $\H_1(A)$ and $\H_1(C_1(\mathscr{P}))$.
For any $1\leq i\neq j\leq n$, we
set $$L_{ij}^{\geq2}:=\{\alpha_{ij}~|~\alpha_{ij}~\text{is~a~path~between}~i~
\text{and}~j,~\text{which~is~of~length~at~least~two}\},$$ and define the ideal of $\H_1(C_1(\mathscr{P}))$ $$\mathfrak{I}_0:=\lr{[C_{P_i}][C_{P_j}]-[C_{P_j}][C_{P_i}]~|~L_{ij}^{\geq2}\neq\emptyset,1\leq i\neq j\leq n}.$$
We remark that for any $1\leq i\neq j\leq n$ either $L_{ij}^{\geq2}=\emptyset$ or $|L_{ij}^{\geq2}|=1$, since $Q$ is a Dynkin quiver.
\begin{theorem}\label{gx}
There exists an epimorphism of algebras
$$\xymatrix{\varphi_0: \H_1(A)\otimes k[x_1,x_2,\cdots,x_n]\ar@{->>}[r]&\H_1(C_1(\mathscr{P}))/\mathfrak{I}_0
}$$ defined by $[S_i]\mapsto [C_{P_i}]$, and $x_i\mapsto [K_{P_i}]$.
\end{theorem}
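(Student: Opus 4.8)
The plan is to construct $\varphi_0$ by first building an algebra map out of the \emph{free} object $\H_1(A)\otimes k[x_1,\dots,x_n]$ and then checking it descends to the quotient by $\mathfrak{I}_0$ and is surjective. Recall from Ringel's work that $\H_1(A)$ is itself generated by the simples $[S_i]$ (indeed $\H_1(A)\cong U(\mathfrak{n}^+)$ for $Q$ Dynkin), so to define an algebra homomorphism on $\H_1(A)\otimes k[x_1,\dots,x_n]$ it suffices to send the generators $[S_i]\mapsto[C_{P_i}]$ and $x_i\mapsto[K_{P_i}]$ and verify that all defining relations of the source are respected. The $k[x_1,\dots,x_n]$-part is easy: the $[K_{P_i}]$ are the isoclasses of the projective-injective objects, so in $\H_1(C_1(\mathscr{P}))$ (and a fortiori in the quotient) they commute with everything and among themselves, exactly because $\mathrm{Ext}^1_{C_1(\mathscr{P})}(K_P,-)=\mathrm{Ext}^1_{C_1(\mathscr{P})}(-,K_P)=0$ forces $[K_P][Y^\cdot]=[Y^\cdot][K_P]=[K_P\oplus Y^\cdot]$; so the polynomial generators map to central elements and the tensor-factor structure causes no trouble.

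First I would pin down exactly which relations hold among the $[C_{P_i}]$ in $\H_1(C_1(\mathscr{P}))$. The key computation is the degenerate version ($q\to1$) of the structure constants $F^{Z^\cdot}_{C_{P_i}C_{P_j}}$. When there is no path between $i$ and $j$ (so $\mathrm{Ext}^1_A(S_i,S_j)=\mathrm{Ext}^1_A(S_j,S_i)=\mathrm{Hom}_A(P_i,P_j)=\mathrm{Hom}_A(P_j,P_i)=0$, using $\mathrm{Hom}_A(P_i,P_j)\cong$ multiplicity of $S_i$ in $P_j$), Lemma \ref{kuozhang} gives $\mathrm{Ext}^1_{C_1(\mathscr{P})}(C_{P_i},C_{P_j})=0$, so $[C_{P_i}]$ and $[C_{P_j}]$ commute outright. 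When there is a path of length $\geq 2$ between $i$ and $j$, the relevant $\mathrm{Hom}$/$\mathrm{Ext}$ groups are such that — after passing to the degenerate ($q=1$) Hall product — the commutator $[C_{P_i}][C_{P_j}]-[C_{P_j}][C_{P_i}]$ lies in $\mathfrak{I}_0$ by definition. When $i,j$ are joined by an edge (a length-one path), the corresponding relation is precisely a (degenerate) Serre-type relation which matches, under $[S_i]\mapsto[C_{P_i}]$, a defining relation of $\H_1(A)=U(\mathfrak{n}^+)$; this is the content one must line up with Theorem \ref{cpg} and Ringel's presentation of $\H_1(A)$. So the strategy is: (i) verify $[C_{P_i}],[C_{P_j}]$ satisfy, modulo $\mathfrak{I}_0$, every defining relation that $[S_i],[S_j]$ satisfy in $\H_1(A)$; (ii) conclude by the universal property of $\H_1(A)\otimes k[x_1,\dots,x_n]$ (presented by generators and relations) that $\varphi_0$ is a well-defined algebra homomorphism into $\H_1(C_1(\mathscr{P}))/\mathfrak{I}_0$.

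For surjectivity, I would invoke Theorem \ref{main result}: $\{[C_{P_i}],[K_{P_i}]\}$ generates $\H(C_1(\mathscr{P}))$, and the same generation statement holds verbatim in the degenerate algebra $\H_1(C_1(\mathscr{P}))$ because the inductive argument in the proof of Theorem \ref{main result} only used the filtered/graded structure and the leading coefficient being nonzero, both of which persist at $q=1$ (Lemma \ref{cp} and Lemma \ref{cps} specialize at $q=1$ with the same shape, the coefficient $\frac{q^{m+1}-1}{q-1}$ becoming $m+1\neq 0$). Hence every generator of $\H_1(C_1(\mathscr{P}))/\mathfrak{I}_0$ is in the image of $\varphi_0$, so $\varphi_0$ is onto.

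The main obstacle I anticipate is step (i) — more precisely, verifying the edge case, i.e.\ that the degenerate Hall multiplication among $[C_{P_i}]$ for adjacent $i,j$ reproduces exactly the Serre relations of $\H_1(A)$ and nothing more. This requires an honest computation of the structure constants $F^{Z^\cdot}_{C_{P_i}C_{P_j}}$ (and their iterated versions appearing in $(\mathrm{ad}\,[C_{P_i}])^2[C_{P_j}]$) in $C_1(\mathscr{P})$, controlling the extra terms coming from the $\mathrm{Hom}_A$-summand in $\mathrm{Ext}^1_{C_1(\mathscr{P})}$ of Lemma \ref{kuozhang} and checking that upon $q\to1$ they either vanish or are absorbed into $\mathfrak{I}_0$; one also has to be careful that no relation among the $[C_{P_i}]$ is \emph{stronger} than its counterpart in $\H_1(A)$, since otherwise the map would still be well-defined but the identification of the source would be off — though for the statement as written (an epimorphism, not an isomorphism) only well-definedness and surjectivity are needed, so any extra relations are harmless. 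I would therefore organize the proof so that step (i) reduces, via Theorem \ref{cpg} and the known presentation of $\H_1(A)$, to a finite list of rank-two checks (types $\mathbb{A}_1\times\mathbb{A}_1$, $\mathbb{A}_2$, and the length-$\geq2$ path configurations inside $\mathbb{A}$, $\mathbb{D}$, $\mathbb{E}$), each handled by a direct Riedtmann–Peng calculation in $C_1(\mathscr{P})$ at $q=1$.
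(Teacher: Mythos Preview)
Your proposal is correct and follows essentially the same route as the paper: centrality of the $[K_{P_i}]$, verification that the $[C_{P_i}]$ satisfy (modulo $\mathfrak{I}_0$) the defining Serre relations of $\H_1(A)\cong U(\mathfrak{n}^+)$, and surjectivity via Theorem~\ref{main result}. The only difference is economy: the ``main obstacle'' you anticipate --- the rank-two Riedtmann--Peng computations at $q=1$ for adjacent vertices --- is already packaged in Theorem~\ref{cpg}(a) and (c), which are stated precisely at the Lie-algebra (hence degenerate) level; the paper simply cites that theorem rather than redoing the checks, and handles the length-$\ge 2$ path case exactly as you do, by the definition of $\mathfrak{I}_0$.
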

\begin{proof}
It is easy to see that for any $M\in\mod A$ and $P,Q\in\mathscr{P}$ we have that $[K_P][C_M]=[C_M][K_P]$ and $[K_P][K_Q]=[K_Q][K_P]$ in $\H_1(C_1(\mathscr{P}))$.
Then by Theorem \ref{cpg}, we obtain that $\varphi_0$ is a homomorphism of algebras, since it is well-known that $\H_1(A)$ is generated by all $[S_i]$ and the Serre relations (cf. \cite{R90a,R92a}). It follows that $\varphi_0$ is an epimorphism from Theorem \ref{main result}.
\end{proof}

If each vertex of the quiver $Q$ is either a sink or a source, then $Q$ is said to be \emph{bipartite}.
\begin{corollary}
Let $Q$ be a bipartite Dynkin quiver. Then there exists an isomorphism of algebras $\psi_0: \H_1(A)\otimes k[x_1,x_2,\cdots,x_n]\rightarrow \H_1(C_1(\mathscr{P}))$ defined by $[S_i]\mapsto [C_{P_i}]$, and $x_i\mapsto [K_{P_i}]$.
\end{corollary}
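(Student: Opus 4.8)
The plan is to upgrade the epimorphism $\varphi_0$ of Theorem~\ref{gx} to an isomorphism by showing that the ideal $\mathfrak{I}_0$ is trivial when $Q$ is bipartite. The key observation is that in a bipartite quiver every path has length at most one: since each vertex is a sink or a source, there are no composable arrows, so a path between $i$ and $j$ of length $\geq 2$ cannot exist. Hence $L_{ij}^{\geq 2}=\emptyset$ for all $i\neq j$, which means the generating set of $\mathfrak{I}_0$ is empty and $\mathfrak{I}_0=\langle\,\rangle=0$. Therefore $\H_1(C_1(\mathscr{P}))/\mathfrak{I}_0=\H_1(C_1(\mathscr{P}))$, and $\varphi_0$ becomes a surjective algebra homomorphism $\psi_0\colon\H_1(A)\otimes k[x_1,\ldots,x_n]\twoheadrightarrow\H_1(C_1(\mathscr{P}))$ with the stated action on generators.

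It then remains to prove injectivity, and the natural route is a dimension/basis count in each graded (or filtered) piece. By the PBW-basis theorem for $\H_1(A)$ (which, after the degeneration, is $U(\mathfrak{n}^+)$ for $\mathfrak{n}^+$ the positive part of the simple Lie algebra of type $\Gamma$), a basis of $\H_1(A)\otimes k[x_1,\ldots,x_n]$ is given by ordered monomials in the classes $[M_q(\alpha)]$, $\alpha\in\Phi^+$, times monomials in the $x_i$. On the other side, Theorem~\ref{PBW} (applied to the degenerate algebra, as in the Corollary following it, where $\H_1(C_1(\mathscr{P}))\cong U(\tilde{\mathfrak n})$) gives a PBW-basis indexed by $\mathfrak{P}_1(\Gamma)$, i.e.\ ordered monomials in the $[C_{M_q(\alpha)}]$, $\alpha\in\Phi^+$, times monomials in the $[K_{P_i}]$. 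The index sets on both sides are in bijection: $\Phi^+$ for the $C$-part (the $A$-module part maps to the $C_{M_q(\alpha)}$ part via Gabriel's theorem) and $I=\{1,\ldots,n\}$ for the polynomial part. I would argue that $\psi_0$ sends the PBW-basis of the source to the PBW-basis of the target up to strictly lower-degree terms, using the filtration of Theorem~\ref{gx}'s proof together with the leading-term analysis already carried out in the proof of Theorem~\ref{main result} (where $[C_\Omega][C_P]=b[C_M]+(\text{lower})$ with $b\neq 0$); since $\psi_0$ is a graded/filtered surjection sending a basis to a triangular (leading-term) family, it must be an isomorphism.

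The main obstacle, and the step deserving the most care, is the compatibility of the two PBW-orderings: one must check that the leading-term triangularity is genuinely compatible with a single total order, so that the image of the source PBW-basis is upper/lower-triangular with nonzero diagonal with respect to the target PBW-basis. Concretely, one needs that for $M\cong M_q(\alpha)$ indecomposable non-projective, $\psi_0$ applied to $[M]$ equals a nonzero multiple of $[C_M]$ plus a combination of PBW-monomials that are strictly smaller in the $\deg$-order of \eqref{fx} and already known (by induction) to lie in the image; this is exactly the content extracted from the proof of Theorem~\ref{main result}, and assembling it into a clean ``basis-to-basis up to lower order'' statement is the crux. Once that is in place, surjectivity plus the triangular form forces bijectivity, since both sides have the same (infinite-dimensional, but finite-dimensional in each filtered piece) graded dimension. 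I would finish by noting that the inverse map is automatically an algebra homomorphism, and that $\psi_0$ restricts to the identification $k[x_1,\ldots,x_n]\xrightarrow{\sim}U(\mathfrak h)$ already recorded in \eqref{fi}.
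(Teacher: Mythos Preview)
Your surjectivity argument is fine and matches the paper: bipartiteness forces every path to have length at most one, so $L_{ij}^{\geq 2}=\emptyset$, $\mathfrak{I}_0=0$, and $\varphi_0$ becomes the desired surjection $\psi_0$.

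The injectivity argument, however, has a genuine gap. Your proposed triangularity---that $\psi_0([M_q(\alpha)])$ has $[C_{M_q(\alpha)}]$ as leading term---is simply false. Take $Q$ of type $\mathbb{A}_2$ with $1\to 2$. Then $S_1$ is indecomposable non-projective, and by definition $\psi_0([S_1])=[C_{P_1}]$; but $[C_{P_1}]$ is \emph{not} a nonzero multiple of $[C_{S_1}]$ plus lower terms in the order~\eqref{fx} (indeed $\deg C_{P_1}<\deg C_{S_1}$ there, so the ``diagonal'' coefficient is zero). Conversely, $[P_1]=[[S_1],[S_2]]$ in $\H_1(A)$, and one computes $\psi_0([P_1])=[[C_{P_1}],[C_{P_2}]]=-[C_{S_1}]$, not a multiple of $[C_{P_1}]$. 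So the correspondence $M_q(\alpha)\leftrightarrow C_{M_q(\alpha)}$ is wrong; there is a nontrivial permutation of $\Phi^+$ hiding here, and nothing in the proof of Theorem~\ref{main result} identifies it or proves triangularity with respect to it. Your fallback, a pure dimension count on filtered pieces, also needs a compatible filtration preserved by $\psi_0$, and the $\Dim$-filtrations you implicitly invoke are not compatible (e.g.\ $\Dim S_i\neq\Dim P_i$).

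The paper takes a different and cleaner route for injectivity. Since $\psi_0$ is an algebra map sending the Lie generators $[S_i]$ to the Lie generators $[C_{P_i}]$, it restricts to a Lie algebra homomorphism $\gamma\colon\mathfrak{n}^+\to\tilde{\mathfrak{n}}^+$; in the bipartite case the relations of Theorem~\ref{cpg} coincide with the Serre relations, so $\gamma$ is an isomorphism (this is \cite[Corollary~4.6]{RSZ}). Hence $\tilde\gamma\colon U(\mathfrak{n}^+)\to U(\tilde{\mathfrak{n}}^+)$ is an isomorphism, and the paper assembles this with the known identifications $\eta\colon\H_1(A)\xrightarrow{\sim}U(\mathfrak{n}^+)$, $g$ from~\eqref{fi}, and $f$ from~\eqref{f} into a commutative square showing $\psi_0=f\circ g\circ(\tilde\gamma\otimes 1)\circ(\eta\otimes 1)$ is an isomorphism. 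If you want to salvage your approach, the shortest fix is exactly this: observe that $\psi_0|_{\mathfrak{n}^+}$ lands in $\tilde{\mathfrak{n}}^+$ (Theorem~\ref{cpg} gives generation), is surjective, and both Lie algebras have dimension $|\Phi^+|$; then lift to enveloping algebras. But that is the paper's argument, not a PBW-triangularity one.
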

\begin{proof}
Since $Q$ is bipartite, we obtain that $L_{ij}^{\geq2}=\emptyset$ for any $1\leq i\neq j\leq n$.
By Theorem \ref{gx}, $\psi_0$ is an epimorphism of algebras.

Let $\mathfrak{n}^+$ be the positive part of the simple Lie algebra associated to $Q$, whose canonical generators are denoted by $e_i,1\leq i\leq n$. Since $Q$ is bipartite, using Theorem \ref{cpg}, we can prove that there exists an isomorphism of Lie algebras $\gamma:\mathfrak{n}^+\rightarrow\tilde{\mathfrak{n}}^+$ defined by $e_i\mapsto [C_{P_i}]$ (cf. \cite[Corollary 4.6]{RSZ}). Hence, it induces an isomorphism of algebras $\tilde{\gamma}:U(\mathfrak{n}^+)\rightarrow U(\tilde{\mathfrak{n}}^+)$ with $e_i\mapsto [C_{P_i}]$. It is well-known that there exists an isomorphism of algebras $\eta:\H_1(A)\rightarrow U(\mathfrak{n}^+)$ defined by $[S_i]\mapsto e_i$ (cf. \cite{R90,R91a}). By the following commutative diagram:
$$\xymatrix@C=1.5in{\H_1(A)\otimes k[x_1,x_2,\cdots,x_n]\ar@{->>}[r]^-{\psi_0}\ar[d]_-{\eta\otimes1}^-{\cong}&\H_1(C_1(\mathscr{P}))\\
U(\mathfrak{n}^+)\otimes k[x_1,x_2,\cdots,x_n]\ar[d]_-{\tilde{\gamma}\otimes1}^-{\cong}&\\U(\tilde{\mathfrak{n}}^+)\otimes k[x_1,x_2,\cdots,x_n]\ar[r]^-{\cong}_-{g}&U(\tilde{\mathfrak{n}})\ar[uu]_-f^-{\cong}}$$where $g$ and $f$ are from $(\ref{fi})$ and $(\ref{f})$, respectively,
we conclude that $\psi_0$ is an isomorphism.
\end{proof}

\section{ Fundamental relations associated to $\H(C_1(\mathscr{P}))$}
Recall that $Q$ is always a Dynkin quiver. In this section, we calculate certain relations in the generators given in Theorem $\ref{main result}$, and obtain some fundamental relations, which have appeared in the Hall algebra of $A$ (cf. \cite{R90a,CD}), in a quotient of $\H(C_1(\mathscr{P}))$.

For any $1\leq i\neq j\leq n$, if $a_{ij}=1$, i.e., there is a path from $i$ to $j$,
then $\Hom_A(P_i,P_j)=0$ and $\Hom_A(P_j,P_i)\cong k$. Note that each nonzero morphism from $P_j$ to $P_i$ is a monomorphism. Take an arbitrary $0\neq f\in\Hom_A(P_j,P_i)$, and set $M_{ij}:=\Coker f$. Then we have a short exact sequence of $A$-modules
\begin{equation}\label{seq}
\xymatrix{0\ar[r]&P_j\ar[r]^-{f}&P_i\ar[r]&M_{ij}\ar[r]&0.}\end{equation} Clearly, $M_{ij}$ is indecomposable, thus $M_{ij}$ is uniquely determined up to isomorphism, which does not depend on the nonzero $f$. Actually, the sequence $(\ref{seq})$ is the minimal projective resolution of $M_{ij}$.

\begin{proposition}\label{jbgx}
For any $1\leq i\neq j\leq n$,

$(1)$~if $a_{ij}=0$, then $[C_{P_i}][C_{P_j}]=[C_{P_j}][C_{P_i}]=[C_{P_i\oplus P_j}]$ in $\H(C_1(\mathscr{P}))$;

$(2)$~if $a_{ij}=1$, then the following relations
\begin{equation}\label{first}
\begin{cases}
[C_{P_j}]^2[C_{P_i}]=(q+1)[C_{P_i\oplus2P_j}]+(q+1)[C_{P_j}\oplus C_{M_{ij}}]+[C_{P_i}\oplus K_{P_j}]\\
[C_{P_j}][C_{P_i}][C_{P_j}]=q(q+1)[C_{P_i\oplus2P_j}]+q[C_{P_j}\oplus C_{M_{ij}}]+q[C_{P_i}\oplus K_{P_j}]\\
[C_{P_i}][C_{P_j}]^2=q^2(q+1)[C_{P_i\oplus2P_j}]+q[C_{P_i}\oplus K_{P_j}]
\end{cases}
\end{equation}
and
\begin{equation}\label{second}
\begin{cases}
[C_{P_i}]^2[C_{P_j}]=q^2(q+1)[C_{P_j\oplus2P_i}]+q[C_{P_j}\oplus K_{P_i}]\\
[C_{P_i}][C_{P_j}][C_{P_i}]=q(q+1)[C_{P_j\oplus2P_i}]+q[C_{P_i}\oplus C_{M_{ij}}]+q[C_{P_j}\oplus K_{P_i}]\\
[C_{P_j}][C_{P_i}]^2=(q+1)[C_{P_j\oplus2P_i}]+(q+1)[C_{P_i}\oplus C_{M_{ij}}]+[C_{P_j}\oplus K_{P_i}]
\end{cases}
\end{equation}
hold in $\H(C_1(\mathscr{P}))$.
\end{proposition}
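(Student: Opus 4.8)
The plan is to compute both products of generators directly using the Riedtmann--Peng formula together with the extension description in Lemma \ref{kuozhang}, exploiting the very small Hom- and Ext-spaces between the projectives $P_i,P_j$ when $a_{ij}\in\{0,1\}$. For part $(1)$, when $a_{ij}=0$ we have $\Hom_A(P_i,P_j)=\Hom_A(P_j,P_i)=0$ and $\Ext^1_A$ vanishes between projectives, so by Lemma \ref{kuozhang} $\Ext^1_{C_1(\mathscr{P})}(C_{P_i},C_{P_j})=0=\Ext^1_{C_1(\mathscr{P})}(C_{P_j},C_{P_i})$; hence each product has a single term, the split extension $[C_{P_i\oplus P_j}]$, with coefficient $1$. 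First I would record this and then turn to the substantive case $a_{ij}=1$.

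For part $(2)$, the computational engine is: $\Ext^1_{C_1(\mathscr{P})}(C_{P_j},C_{P_i})\cong\Hom_A(P_j,P_i)\cong k$ while $\Ext^1_{C_1(\mathscr{P})}(C_{P_i},C_{P_j})\cong\Hom_A(P_i,P_j)=0$, and $\dim_k\Ext^1_{C_1(\mathscr{P})}(C_{P_i},C_{P_i})=\dim_k\Hom_A(P_i,P_i)=1$ and likewise for $P_j$ (since $P_i,P_j$ are indecomposable projective, their endomorphism rings are local with residue field $k$, so $\Hom_A(P_i,P_i)\cong k$). First I would identify which indecomposable objects $Z^\cdot$ can arise as the middle term of $0\to C_{P_i}\to Z^\cdot\to C_{P_j}\to 0$ (and the three analogous types): using Lemma \ref{bkfj}, a nonzero class in $\Ext^1_{C_1(\mathscr{P})}(C_{P_j},C_{P_i})\cong\Hom_A(P_j,P_i)$ corresponds to the monomorphism $f$ of $(\ref{seq})$, so the indecomposable middle term is $C_{M_{ij}}$, while the zero class gives the split object $C_{P_i}\oplus C_{P_j}$. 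Similarly the self-extensions $0\to C_{P_i}\to Z^\cdot\to C_{P_i}\to 0$: a nonzero class in $\Hom_A(P_i,P_i)\cong k$ is an isomorphism, which by the computation in Lemma \ref{cp} (the $m=1$ case) yields $Z^\cdot\cong K_{P_i}$, and the zero class yields $C_{2P_i}$. Then I would compute each of the three products $[C_{P_j}]^2[C_{P_i}]$, $[C_{P_j}][C_{P_i}][C_{P_j}]$, $[C_{P_i}][C_{P_j}]^2$ by expanding step by step: first $[C_{P_j}][C_{P_i}]$ using Riedtmann--Peng (getting $[C_{2P_j}]$-type? no---$[C_{P_j\oplus P_i}]$ and $[C_{M_{ij}}]$ terms with explicit $q$-powers coming from $|\Aut|$ ratios), then multiplying on the appropriate side by the remaining generator and re-expanding, repeatedly using $[C_{mP}][C_P]$ from Lemma \ref{cp} and the split-extension Hall numbers $F_{C_X,C_Y}^{C_X\oplus C_Y}=q^{\dim_k\Hom_A(H_0(C_X),H_0(C_Y))}$ wherever $\Ext$ vanishes.

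The bookkeeping of $q$-powers is the part requiring care: each Riedtmann--Peng evaluation needs $|\Ext^1_{C_1(\mathscr{P})}(X^\cdot,Y^\cdot)_{Z^\cdot}|$, $|\Hom_{C_1(\mathscr{P})}(X^\cdot,Y^\cdot)|$ and the three automorphism-group orders, where $\Hom_{C_1(\mathscr{P})}(C_M,C_N)$ and $\Aut(C_M)$ must be computed from the data of the complexes $C_M=C_{\delta_M}$ (not merely from $H_0$); for instance $\Aut(C_{P_i})$, $\Aut(C_{2P_i})$, $\Aut(K_{P_i})$, $\Aut(C_{M_{ij}})$ have to be found explicitly as orders of the form $q^a(q-1)^b\cdots$. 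I expect the main obstacle to be precisely this: assembling the automorphism group orders of the several objects $C_{P_i\oplus 2P_j}$, $C_{P_j}\oplus C_{M_{ij}}$, $C_{P_i}\oplus K_{P_j}$, etc., and the relevant Hom-space cardinalities in $C_1(\mathscr{P})$, correctly enough that the coefficients match $(q+1)$, $q(q+1)$, $q^2(q+1)$, $q$, and $1$ as claimed; once the dictionary ``$\Hom/\Aut$ in $C_1(\mathscr{P})$ via $(\ref{mini proj res})$'' is set up, the six identities in $(\ref{first})$ and $(\ref{second})$ follow by the same routine, and the symmetry $i\leftrightarrow j$ interchanging the roles of source and target of $f$ makes $(\ref{second})$ a mirror of $(\ref{first})$. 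I would finish by cross-checking internal consistency: all six right-hand sides must be compatible with the associativity relation $[C_{P_j}]([C_{P_j}][C_{P_i}])=([C_{P_j}][C_{P_j}])[C_{P_i}]$ and its analogues, which both validates the $q$-powers and often shortcuts the last one or two computations.
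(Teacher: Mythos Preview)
Your plan is essentially identical to the paper's proof: the paper likewise handles $(1)$ by the vanishing of both $\Ext^1_{C_1(\mathscr{P})}$ groups, and for $(2)$ computes step by step $[C_{P_j}]^2$, $[C_{P_j}][C_{P_i}]$, $[C_{P_i}][C_{P_j}]$ and then multiplies by the remaining generator, evaluating every Hall number via the Riedtmann--Peng formula together with the explicit Hom/Aut data for the objects of $C_1(\mathscr{P})$ (which it imports from \cite[Proposition~2.4]{RSZ}). One small correction to your bookkeeping: the split-extension Hall number is $F_{X^\cdot,Y^\cdot}^{X^\cdot\oplus Y^\cdot}=|\Hom_{C_1(\mathscr{P})}(Y^\cdot,X^\cdot)|$ (Hom in the \emph{complex} category, in the \emph{reverse} direction), not $q^{\dim_k\Hom_A(H_0(X^\cdot),H_0(Y^\cdot))}$; with that fix---and remembering that $\Hom_{C_1(\mathscr{P})}(C_M,C_N)$ need not coincide with $\Hom_A(M,N)$ once the differential is nonzero---your computations will reproduce the coefficients in $(\ref{first})$ and $(\ref{second})$ exactly as the paper obtains them.
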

\begin{proof}
In the whole proof, we will use the Riedtmann--Peng formula together with Proposition 2.4 in \cite{RSZ} to calculate Hall numbers.

$(1)$~Since $a_{ij}=0$, that is, there is no path between $i$ and $j$, we obtain that $$\Hom_A(P_i,P_j)=\Hom_A(P_j,P_i)=0,$$ thus $\Ext^1_{C_1(\mathscr{P})}(C_{P_i},C_{P_j})=\Ext^1_{C_1(\mathscr{P})}(C_{P_j},C_{P_i})=0$. Then it is easy to see that $$F_{C_{P_i}C_{P_j}}^{C_{P_i}\oplus C_{P_j}}=F_{C_{P_j}C_{P_i}}^{C_{P_i}\oplus C_{P_j}}=1.$$

$(2)$~Applying $\Hom_A(P_j,-)$ to the sequence $(\ref{seq})$, we obtain that $\Hom_A(P_j,M_{ij})=0$, thus $\Ext^1_{C_1(\mathscr{P})}(C_{P_j},C_{M_{ij}})\cong\Hom_A(P_j,M_{ij})=0$. Similarly, applying $\Hom_A(-,P_j)$ to the sequence $(\ref{seq})$, we get that $\Ext^1_{C_1(\mathscr{P})}(C_{M_{ij}},C_{P_j})\cong\Ext_A^1(M_{ij},P_j)$ is one-dimensional.

\begin{equation}
\begin{split}
[C_{P_j}]^2&=F_{C_{P_j}C_{P_j}}^{C_{2P_j}}[C_{2{P_j}}]+F_{C_{P_j}C_{P_i}}^{K_{P_j}}[K_{P_j}]\\
&=\frac{1}{q}\cdot\frac{(q^2-q)(q^2-1)}{(q-1)^2}[C_{2{P_j}}]+\frac{q-1}{q}\cdot\frac{(q-1)q}{(q-1)^2}[K_{P_j}]\\
&=(q+1)[C_{2{P_j}}]+[K_{P_j}];
\end{split}
\end{equation}
\begin{equation}
\begin{split}
[C_{P_i}][C_{P_j}]^2&=[C_{P_i}]((q+1)[C_{2{P_j}}]+[K_{P_j}])\\&=(q+1)[C_{P_i}][C_{2{P_j}}]+[C_{P_i}][K_{P_j}]\\
&=(q+1)\cdot\frac{(q-1)(q^2-q)(q^2-1)q^2}{(q-1)(q^2-q)(q^2-1)}[C_{P_i\oplus2{P_j}}]+q[C_{P_i}\oplus K_{P_j}]\\
&=q^2(q+1)[C_{P_i\oplus2{P_j}}]+q[C_{P_i}\oplus K_{P_j}];
\end{split}
\end{equation}
\begin{equation}
\begin{split}
[C_{P_j}][C_{P_i}]&=F_{C_{P_j}C_{P_i}}^{C_{P_i\oplus P_j}}[C_{P_i}\oplus C_{P_j}]+F_{C_{P_j}C_{P_i}}^{C_{M_{ij}}}[C_{M_{ij}}]\\
&=\frac{1}{q}\cdot\frac{(q-1)^2q}{(q-1)^2}[C_{P_i}\oplus C_{P_j}]+\frac{q-1}{q}\cdot\frac{(q-1)q}{(q-1)^2}[C_{M_{ij}}]\\
&=[C_{P_i}\oplus C_{P_j}]+[C_{M_{ij}}];
\end{split}
\end{equation}
\begin{equation}
\begin{split}
[C_{P_j}][C_{P_i}][C_{P_j}]&=([C_{P_i\oplus P_j}]+[C_{M_{ij}}])[C_{P_j}]\\&=[C_{P_i\oplus P_j}][C_{P_j}]+[C_{M_{ij}}][C_{P_j}]\\
&=F_{C_{P_i\oplus P_j}C_{P_j}}^{C_{P_i\oplus2{P_j}}}[C_{P_i\oplus2{P_j}}]+F_{C_{P_i\oplus P_j}C_{P_j}}^{C_{P_i}\oplus K_{P_j}}[C_{P_i}\oplus K_{P_j}]\\&+F_{C_{M_{ij}}C_{P_j}}^{C_{M_{ij}}\oplus C_{P_j}}[C_{P_j}\oplus C_{M_{ij}}]+F_{C_{M_{ij}}C_{P_j}}^{C_{P_i}\oplus K_{P_j}}[C_{P_i}\oplus K_{P_j}]\\
&=\frac{1}{q}\cdot\frac{(q-1)(q^2-q)(q^2-1)q^2}{(q-1)^2q(q-1)}[C_{P_i\oplus2{P_j}}]+\frac{q-1}{q}\cdot\frac{(q-1)^2q^2}{(q-1)^3q}
[C_{P_i}\oplus K_{P_j}]\\&+q[C_{P_j}\oplus C_{M_{ij}}]+\frac{q-1}{q}\frac{(q-1)^2q^2}{(q-1)q(q-1)}[C_{P_i}\oplus K_{P_j}]\\
&=q(q+1)[C_{P_i\oplus2{P_j}}]+q[C_{P_j}\oplus C_{M_{ij}}]+q[C_{P_i}\oplus K_{P_j}];
\end{split}
\end{equation}
\begin{equation}
\begin{split}
[C_{P_j}]^2[C_{P_i}]&=[C_{P_j}][C_{P_i\oplus P_j}]+[C_{P_j}][C_{M_{ij}}]\\
&=F_{C_{P_j}C_{P_i\oplus P_j}}^{C_{P_i\oplus2{P_j}}}[C_{P_i\oplus2{P_j}}]+F_{C_{P_j}C_{P_i\oplus P_j}}^{C_{P_i}\oplus K_{P_j}}[C_{P_i}\oplus K_{P_j}]\\&+F_{C_{P_j}C_{P_i\oplus P_j}}^{C_{P_j}\oplus C_{M_{ij}}}[C_{P_j}\oplus C_{M_{ij}}]+F_{C_{P_j}C_{M_{ij}}}^{C_{P_j}\oplus C_{M_{ij}}}[C_{P_j}\oplus C_{M_{ij}}]\\
&=\frac{1}{q^2}\cdot\frac{(q-1)(q^2-q)(q^2-1)q^2}{(q-1)^3q}[C_{P_i\oplus2{P_j}}]+\frac{(q-1)q}{q^2}\frac{(q-1)^2q^2}{(q-1)^3q}[C_{P_i}\oplus K_{P_j}]\\&+\frac{q-1}{q^2}\cdot\frac{(q-1)q(q-1)q^2}{(q-1)^3q}[C_{P_j}\oplus C_{M_{ij}}]+\frac{1}{q}\frac{(q-1)^2q^3}{(q-1)^2q}[C_{P_j}\oplus C_{M_{ij}}]\\&=(q+1)[C_{P_i\oplus2{P_j}}]+(q+1)[C_{P_j}\oplus C_{M_{ij}}]+[C_{P_i}\oplus K_{P_j}].
\end{split}
\end{equation}
Hence, we complete the proof of the relations in $(\ref{first})$.
\begin{equation}
\begin{split}
[C_{P_i}][C_{P_j}]&=F_{C_{P_i}C_{P_j}}^{C_{P_i\oplus P_j}}[C_{P_i\oplus P_j}]=\frac{(q-1)^2q}{(q-1)^2}[C_{P_i\oplus P_j}]=q[C_{P_i\oplus P_j}];
\end{split}
\end{equation}
\begin{equation}
\begin{split}
[C_{P_i}]^2[C_{P_j}]&=q[C_{P_i}][C_{P_i\oplus P_j}]\\&=q(\frac{1}{q}\cdot\frac{(q-1)(q^2-q)(q^2-1)q^2}{(q-1)^3q}[C_{2P_i\oplus P_j}]+\frac{q-1}{q}\cdot\frac{(q-1)^2q^2}{(q-1)^3q}[C_{P_j}\oplus K_{P_i}])\\&
=q^2(q+1)[C_{2P_i\oplus P_j}]+q[C_{P_j}\oplus K_{P_i}];
\end{split}
\end{equation}
\begin{equation}
\begin{split}
[C_{P_i}][C_{P_j}][C_{P_i}]&=[C_{P_i}][C_{P_i\oplus P_j}]+[C_{P_i}][C_{M_{ij}}]\\
&=q(q+1)[C_{2P_i\oplus P_j}]+[C_{P_j}\oplus K_{P_i}]+q[C_{P_i}\oplus C_{M_{ij}}]+(q-1)[C_{P_j}\oplus K_{P_i}]\\
&=q(q+1)[C_{2P_i\oplus P_j}]+q[C_{P_j}\oplus K_{P_i}]+q[C_{P_i}\oplus C_{M_{ij}}];
\end{split}
\end{equation}
\begin{equation}
\begin{split}
[C_{P_j}][C_{P_i}]^2&=(q+1)[C_{P_j}][C_{2{P_i}}]+[C_{P_j}][K_{P_i}]\\&=
(q+1)(\frac{1}{q^2}\cdot\frac{q^3(q-1)^3(q+1)}{q(q-1)^3(q+1)}[C_{2P_i\oplus P_j}]+\\&\frac{q^2-1}{q^2}\cdot\frac{q^3(q-1)^2}{q(q-1)^3(q+1)}[C_{P_i}\oplus C_{M_{ij}}])+[C_{P_j}\oplus K_{P_i}]\\
&=(q+1)[C_{2P_i\oplus P_j}]+(q+1)[C_{P_i}\oplus C_{M_{ij}}])+[C_{P_j}\oplus K_{P_i}].
\end{split}
\end{equation}
Hence, we complete the proof of the relations in $(\ref{second})$.
\end{proof}
\begin{remark}
$(1)$~As a corollary of Proposition $\ref{jbgx}$, taking $q=1$, we obtain Relation $(a)$ in Theorem $\ref{cpg}$.

$(2)$~There do not exist $x,y,z\in Q(v)$ such that $$x[C_{P_j}]^2[C_{P_i}]+y[C_{P_j}][C_{P_i}][C_{P_j}]+z[C_{P_i}][C_{P_j}]^2=0$$ or
$$x[C_{P_i}]^2[C_{P_j}]+y[C_{P_i}][C_{P_j}][C_{P_i}]+z[C_{P_j}][C_{P_i}]^2=0.$$
That is, we fail to obtain an analogue of quantum Serre relations in $\H(C_1(\mathscr{P}))$.
\end{remark}

Let $\mathfrak{K}_0$ be the ideal of $\H(C_1(\mathscr{P}))$ generated by all $[K_{P_i}]$, $1\leq i\leq n$. Set $$\underline{\H}(C_1(\mathscr{P})):=\H(C_1(\mathscr{P}))/\mathfrak{K}_0.$$ By abuse of notation, in what follows, the image of each $[C_{P_i}]$ under the canonical projection is also denoted by $[C_{P_i}]$.

\begin{proposition}\label{fgx}{\rm\textbf{(Fundamental relations)}}
For any $1\leq i\neq j\leq n$, we have in $\underline{\H}(C_1(\mathscr{P}))$ that

$(1)$~if $a_{ij}=0$, then $[C_{P_i}][C_{P_j}]=[C_{P_j}][C_{P_i}]$;

$(2)$~if $a_{ij}=1$, then
$$[C_{P_i}][C_{P_j}]^2-(q+1)[C_{P_j}][C_{P_i}][C_{P_j}]+q[C_{P_j}]^2[C_{P_i}]=0$$
and
$$q[C_{P_j}][C_{P_i}]^2-(q+1)[C_{P_i}][C_{P_j}][C_{P_i}]+[C_{P_i}]^2[C_{P_j}]=0.$$
\end{proposition}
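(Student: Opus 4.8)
The plan is to obtain Proposition \ref{fgx} as a direct corollary of Proposition \ref{jbgx} by passing to the quotient $\underline{\H}(C_1(\mathscr{P}))=\H(C_1(\mathscr{P}))/\mathfrak{K}_0$. The only structural observation needed is that modulo $\mathfrak{K}_0$ every basis element involving a factor $[K_{P_l}]$ dies; in particular the terms $[C_{P_i}\oplus K_{P_j}]$ and $[C_{P_j}\oplus K_{P_i}]$ appearing on the right-hand sides of $(\ref{first})$ and $(\ref{second})$ vanish, since $[C_{P_i}\oplus K_{P_j}]=\frac{1}{q}[C_{P_i}][K_{P_j}]$ lies in $\mathfrak{K}_0$ (more carefully, by the Riedtmann--Peng formula $[C_{P_i}][K_{P_j}]=F_{C_{P_i}K_{P_j}}^{C_{P_i}\oplus K_{P_j}}[C_{P_i}\oplus K_{P_j}]$ with a nonzero scalar, because $\Ext^1_{C_1(\mathscr{P})}(C_{P_i},K_{P_j})=0$ as $K_{P_j}$ is injective). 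So in $\underline{\H}(C_1(\mathscr{P}))$ the six relations of Proposition \ref{jbgx}(2) simplify to
\begin{align*}
[C_{P_j}]^2[C_{P_i}] &= (q+1)[C_{P_i\oplus2P_j}]+(q+1)[C_{P_j}\oplus C_{M_{ij}}],\\
[C_{P_j}][C_{P_i}][C_{P_j}] &= q(q+1)[C_{P_i\oplus2P_j}]+q[C_{P_j}\oplus C_{M_{ij}}],\\
[C_{P_i}][C_{P_j}]^2 &= q^2(q+1)[C_{P_i\oplus2P_j}],
\end{align*}
and symmetrically with $i,j$ interchanged,
\begin{align*}
[C_{P_i}]^2[C_{P_j}] &= q^2(q+1)[C_{P_j\oplus2P_i}],\\
[C_{P_i}][C_{P_j}][C_{P_i}] &= q(q+1)[C_{P_j\oplus2P_i}]+q[C_{P_i}\oplus C_{M_{ij}}],\\
[C_{P_j}][C_{P_i}]^2 &= (q+1)[C_{P_j\oplus2P_i}]+(q+1)[C_{P_i}\oplus C_{M_{ij}}].
\end{align*}

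For part (1), the case $a_{ij}=0$ of Proposition \ref{jbgx} already gives $[C_{P_i}][C_{P_j}]=[C_{P_j}][C_{P_i}]=[C_{P_i\oplus P_j}]$ in $\H(C_1(\mathscr{P}))$ itself, hence a fortiori in the quotient; nothing further is required. For part (2) I would take the linear combination of the three simplified relations in the first block with coefficients $1$, $-(q+1)$, $q$ on the right-hand sides, i.e. verify
$$q^2(q+1)-(q+1)\cdot q(q+1)+q\cdot(q+1)^2 = 0 \quad\text{(coefficient of }[C_{P_i\oplus2P_j}]\text{)}$$
and
$$0-(q+1)\cdot q + q\cdot(q+1) = 0 \quad\text{(coefficient of }[C_{P_j}\oplus C_{M_{ij}}]\text{)},$$
which yields $[C_{P_i}][C_{P_j}]^2-(q+1)[C_{P_j}][C_{P_i}][C_{P_j}]+q[C_{P_j}]^2[C_{P_i}]=0$. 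The second claimed relation follows the same way from the $i\leftrightarrow j$ block: the combination with coefficients $q$, $-(q+1)$, $1$ on $[C_{P_j}][C_{P_i}]^2$, $[C_{P_i}][C_{P_j}][C_{P_i}]$, $[C_{P_i}]^2[C_{P_j}]$ kills the coefficient of $[C_{P_j\oplus2P_i}]$ (namely $q(q+1)-(q+1)q(q+1)+q^2(q+1)=0$) and of $[C_{P_i}\oplus C_{M_{ij}}]$ (namely $q(q+1)-(q+1)q=0$).

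There is essentially no obstacle here: the content lies entirely in Proposition \ref{jbgx}, and the present proof is a bookkeeping step — discard the $K$-terms, then check two scalar identities. The one point worth stating explicitly is the justification that $[C_{P_i}\oplus K_{P_j}]\in\mathfrak{K}_0$; I would note this once at the start of the proof, remarking that $\mathfrak{K}_0$ is spanned as a vector space by all $[C_{P_i}\oplus K_{P_j}]$-type classes together with every isoclass whose representative has a $K$-summand, because multiplying $[K_{P_j}]$ by an arbitrary basis element of $\H(C_1(\mathscr{P}))$ produces (up to nonzero scalars) exactly the classes $[Z^\cdot\oplus K_{P_j}]$, as $K_{P_j}$ is projective-injective in $C_1(\mathscr{P})$ by Lemma \ref{indec. obj.s} and hence has no nonsplit extensions on either side.
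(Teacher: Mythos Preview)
Your proposal is correct and follows exactly the paper's own proof: pass to $\underline{\H}(C_1(\mathscr{P}))$ by observing that $[C_{P_i}\oplus K_{P_j}]$ and $[C_{P_j}\oplus K_{P_i}]$ lie in $\mathfrak{K}_0$, then read off the two relations as the obvious linear combinations of the simplified versions of $(\ref{first})$ and $(\ref{second})$. One small slip to fix: in your displayed check of the coefficient of $[C_{P_i\oplus 2P_j}]$ the last summand should be $q\cdot(q+1)$ (coming from $q$ times the first line of the block), not $q\cdot(q+1)^2$; with that correction the identity reads $q^2(q+1)-q(q+1)^2+q(q+1)=q(q+1)\bigl(q-(q+1)+1\bigr)=0$ as intended.
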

\begin{proof}
$(1)$~This is clear.

$(2)$~By Proposition $\ref{jbgx}$, noting that $[C_{P_i}\oplus K_{P_j}]=[K_{P_j}][C_{P_i}]\in\mathfrak{K}_0$ and $[C_{P_j}\oplus K_{P_i}]=[C_{P_j}][K_{P_i}]\in\mathfrak{K}_0$, we obtain in $\underline{\H}(C_1(\mathscr{P}))$ that
\begin{equation}
\begin{cases}
[C_{P_j}]^2[C_{P_i}]=(q+1)[C_{P_i\oplus2P_j}]+(q+1)[C_{P_j}\oplus C_{M_{ij}}]\\
[C_{P_j}][C_{P_i}][C_{P_j}]=q(q+1)[C_{P_i\oplus2P_j}]+q[C_{P_j}\oplus C_{M_{ij}}]\\
[C_{P_i}][C_{P_j}]^2=q^2(q+1)[C_{P_i\oplus2P_j}]
\end{cases}
\end{equation}
and
\begin{equation}
\begin{cases}
[C_{P_i}]^2[C_{P_j}]=q^2(q+1)[C_{P_j\oplus2P_i}]\\
[C_{P_i}][C_{P_j}][C_{P_i}]=q(q+1)[C_{P_j\oplus2P_i}]+q[C_{P_i}\oplus C_{M_{ij}}]\\
[C_{P_j}][C_{P_i}]^2=(q+1)[C_{P_j\oplus2P_i}]+(q+1)[C_{P_i}\oplus C_{M_{ij}}].
\end{cases}
\end{equation}
Then it is easy to obtain the desired relations.
\end{proof}

\section{Quantum groups and Hall algebras}
In this section, in order to acquire the quantum Serre relations via $\H(C_1(\mathscr{P}))$, we need to consider the twisted versions of Hall algebras.

For any $M, N\in \mod A$, define $$\lr{M,N}:=\dim_k{\Hom_{A}(M,N)}-\dim_k{\Ext_{A}^{1}(M,N)}.$$
It induces a bilinear form
$$\lr{\cdot ,\cdot }: K(A)\times K(A)\longrightarrow \mathbb{Z},$$ known as the \emph{Euler form}. The \emph{Ringel--Hall algebra} $\H_{\tw}(A)$ of $A$ is the same vector space as $\H(A)$ but with twisted multiplication defined by $$[M]\ast[N]=v^{\lr{{M},\,{N}}}\sum\limits_{[L]}F_{MN}^L[L].$$

Let $U_v(\mathfrak{n}^+)$ be the positive part of the quantum group associated to the Dynkin quiver $Q$, whose canonical generators are denoted by $E_i, 1\leq i\leq n$. The following theorem is well-known.
\begin{theorem}\label{lzq}
There exists an isomorphism of algebras $$R:U_v(\mathfrak{n}^+)\longrightarrow\H_{\tw}(A)$$ defined on generators by $R(E_i)=[S_i]$.
\end{theorem}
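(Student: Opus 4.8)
The plan is to prove that $R\colon U_v(\mathfrak{n}^+)\to\H_{\tw}(A)$ is an isomorphism of algebras by the standard Ringel route, treating the correspondence through the generators $E_i\mapsto[S_i]$. First I would check that $R$ is well-defined as an algebra homomorphism: this amounts to verifying that the images $[S_i]\in\H_{\tw}(A)$ satisfy the quantum Serre relations, i.e. that for $a_{ij}\neq 0$ (so that there is an arrow between $i$ and $j$ in the Dynkin quiver) one has $[S_i]^{\ast 2}\ast[S_j]-(v+v^{-1})[S_i]\ast[S_j]\ast[S_i]+[S_j]\ast[S_i]^{\ast 2}=0$ and $[S_i]\ast[S_j]=[S_j]\ast[S_i]$ when $a_{ij}=0$. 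These computations reduce to Hall numbers among the representations supported on the two vertices $i,j$, that is, to the Hall algebra of the $\mathbb{A}_2$ or $\mathbb{A}_1\times\mathbb{A}_1$ quiver, which are classical and can be read off directly from the Riedtmann--Peng formula together with the explicit list of indecomposables; the twisting exponents $\lr{S_i,S_j}$, $\lr{S_i,S_i}=1$ supply exactly the factors needed to turn the untwisted structure constants into the quantized Serre polynomial. By the universal property of $U_v(\mathfrak{n}^+)$ as the algebra on generators $E_i$ modulo the quantum Serre relations, this gives a well-defined algebra map $R$.

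Next I would prove surjectivity. Since $A=kQ$ is hereditary of Dynkin type, Ringel's theorem says $\H_{\tw}(A)$ is generated as an algebra by the classes $[S_i]$ of the simple modules: indeed every $[M]$ can be expressed as a $Q(v)$-combination of products of the $[S_i]$, which one sees by induction on $\dim_k M$ using that if $M$ is not semisimple it has a simple submodule or quotient and the leading term of the relevant product is $[M]$ itself up to a nonzero scalar (a filtration/triangularity argument analogous to the PBW argument of Theorem \ref{PBW}). Hence $R$ is onto.

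For injectivity I would compare dimensions in each graded component, or equivalently exhibit that $R$ carries a basis to a basis. The algebra $U_v(\mathfrak{n}^+)$ has a PBW basis indexed by $\mathfrak{P}\colon\Phi^+\to\mathbb{N}$ (for any fixed convex order on $\Phi^+$), obtained from the root vectors $E_\alpha$; under $R$ the root vector $E_{\beta_i}=E_i$ attached to a simple root maps to $[S_i]$, and more generally, using the braid group action or Ringel's direct construction, $E_\alpha$ maps to a scalar multiple of $[M_q(\alpha)]$ plus lower terms in the same filtration used in Theorem \ref{PBW}. Therefore the PBW monomials of $U_v(\mathfrak{n}^+)$ map to a triangular-unipotent transform of the monomial basis $\{\prod[M_q(\alpha)]^{\lambda(\alpha)}\}$ of $\H_{\tw}(A)$, which is a basis by the Hall-algebra PBW theorem in the module category. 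Since $R$ sends a basis to a basis it is an isomorphism.

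The main obstacle is the bookkeeping in the last step: making precise that the root vectors $E_\alpha$ have leading term $[M_q(\alpha)]$ (up to a nonzero scalar and lower-order corrections with respect to the dimension/endomorphism filtration) and hence that the PBW basis of $U_v(\mathfrak{n}^+)$ is carried to a triangular transform of the monomial basis of $\H_{\tw}(A)$. In practice one avoids this by simply quoting Ringel's theorem (\cite{R90a,R90,R91a}) — that $R$ is an isomorphism is exactly the statement proved there — and I would phrase the proof of Theorem \ref{lzq} as: well-definedness and surjectivity as above, and then invoke the known equality of graded dimensions of $U_v(\mathfrak{n}^+)$ and $\H_{\tw}(A)$ in each $\mathbb{N}^n$-degree (both equal the number of Kostant partitions of that degree into positive roots) to conclude that the surjection $R$ is bijective. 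Thus the only genuinely computational input is the verification of the quantum Serre relations for $\{[S_i]\}$, which is the $\mathbb{A}_2$ computation, and everything else is dimension counting.
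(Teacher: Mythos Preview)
The paper does not give its own proof of this theorem: it is stated as well-known (``The following theorem is well-known'') and implicitly attributed to Ringel's classical work \cite{R90a,R90,R91a}, with no argument supplied. Your sketch is a correct outline of precisely that classical proof---verifying the quantum Serre relations for the $[S_i]$ via an $\mathbb{A}_2$ computation, then concluding bijectivity either by PBW triangularity or by matching graded dimensions with Kostant partition counts---so there is nothing to compare beyond noting that you have written out what the paper merely cites.
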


Let us give a twisted version of the Hall algebra $\H(C_1(\mathscr{P}))$.
\begin{definition}
The \emph{twisted Hall algebra} $\H_{\tw}(C_1(\mathscr{P}))$ of $C_1(\mathscr{P})$ is the same vector space as $\H(C_1(\mathscr{P}))$ but with twisted multiplication defined by $$[X^\cdot]\ast[Y^\cdot]=v^{-\lr{{Y_0},\,{X_0}}}\sum\limits_{[Z^\cdot]}F_{X^\cdot Y^\cdot}^{Z^\cdot}[Z^\cdot],$$
where $X^\cdot =(X_0,c),Y^\cdot=(Y_0,d)\in C_1(\mathscr{P})$.
\end{definition}
It is easy to see that $\H_{\tw}(C_1(\mathscr{P}))$ is still an associative algebra.
\begin{remark}
Let $\mathfrak{H}$ be the subalgebra of $\H_{\tw}(C_1(\mathscr{P}))$ generated by all $[K_{P_i}]$, $1\leq i\leq n$. Then $\mathfrak{H}\cong k[x_1,x_2,\cdots,x_n]$, and moreover it is contained in the center of $\H_{\tw}(C_1(\mathscr{P}))$. That is, for any $P\in\mathscr{P}$ and $X^\cdot\in C_1(\mathscr{P})$, we have that $[K_P]\ast[X^\cdot]=[X^\cdot]\ast[K_P]$ (This can be easily verified by Riedtmann--Peng formula together with Proposition 2.4 in \cite{RSZ}).
\end{remark}
Let $\mathfrak{K}_1$ be the ideal of $\H_{\tw}(C_1(\mathscr{P}))$ generated by all $[K_{P_i}]$, $1\leq i\leq n$. Set $$\underline{\H}_{\tw}(C_1(\mathscr{P})):=\H_{\tw}(C_1(\mathscr{P}))/\mathfrak{K}_1.$$
Let us reformulate Propositions $\ref{jbgx}$ and $\ref{fgx}$.

\begin{proposition}\label{tjbgx}
For any $1\leq i\neq j\leq n$,

$(1)$~if $a_{ij}=0$, then $[C_{P_i}]\ast[C_{P_j}]=[C_{P_j}]\ast[C_{P_i}]=[C_{P_i\oplus P_j}]$ in $\H_{\tw}(C_1(\mathscr{P}))$;

$(2)$~if $a_{ij}=1$, then the following relations
\begin{equation}\label{gx1}
\begin{cases}
[C_{P_j}]^{*2}*[C_{P_i}]=(v+v^{-1})[C_{P_i\oplus2P_j}]+(v+v^{-1})[C_{P_j}\oplus C_{M_{ij}}]+v^{-1}[C_{P_i}\oplus K_{P_j}]\\
[C_{P_j}]*[C_{P_i}]*[C_{P_j}]=(q+1)[C_{P_i\oplus2P_j}]+[C_{P_j}\oplus C_{M_{ij}}]+[C_{P_i}\oplus K_{P_j}]\\
[C_{P_i}]\ast[C_{P_j}]^{\ast2}=v(q+1)[C_{P_i\oplus2P_j}]+v^{-1}[C_{P_i}\oplus K_{P_j}]
\end{cases}
\end{equation}
and
\begin{equation}\label{gx2}
\begin{cases}
[C_{P_i}]^{\ast2}*[C_{P_j}]=v(q+1)[C_{P_j\oplus2P_i}]+v^{-1}[C_{P_j}\oplus K_{P_i}]\\
[C_{P_i}]\ast[C_{P_j}]\ast[C_{P_i}]=(q+1)[C_{P_j\oplus2P_i}]+[C_{P_i}\oplus C_{M_{ij}}]+[C_{P_j}\oplus K_{P_i}]\\
[C_{P_j}]\ast[C_{P_i}]^{\ast2}=(v+v^{-1})[C_{P_j\oplus2P_i}]+(v+v^{-1})[C_{P_i}\oplus C_{M_{ij}}]+v^{-1}[C_{P_j}\oplus K_{P_i}]
\end{cases}
\end{equation}
hold in $\H_{\tw}(C_1(\mathscr{P}))$.
\end{proposition}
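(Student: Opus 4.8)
The plan is to derive the asserted identities from Proposition~\ref{jbgx} by keeping track of the twisting monomials, without recomputing a single Hall number. The mechanism is that in $\H_{\tw}(C_1(\mathscr{P}))$ the product $[X^\cdot]\ast[Y^\cdot]$ equals $v^{-\lr{Y_0,X_0}}$ times the product $[X^\cdot]\cdot[Y^\cdot]$ taken in $\H(C_1(\mathscr{P}))$, and the scalar $v^{-\lr{Y_0,X_0}}$ depends on $X_0,Y_0$ only through their classes in $K(A)$, since the Euler form factors through $K(A)$.

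First I would record that the underlying class is additive on conflations: if $0\to Y^\cdot\to Z^\cdot\to X^\cdot\to 0$ is a conflation in $C_1(\mathscr{P})$, the underlying $A$-modules form a short exact sequence $0\to Y_0\to Z_0\to X_0\to 0$, so $\widehat{Z_0}=\widehat{X_0}+\widehat{Y_0}$ in $K(A)$. Hence every complex $Z^\cdot$ occurring in a product $[C_{P_a}]\cdot[C_{P_b}]$ has $\widehat{Z_0}=\widehat{P_a}+\widehat{P_b}$, and, iterating, every complex occurring in a word $[C_{P_{a_1}}]\cdots[C_{P_{a_r}}]$ has underlying class $\sum_{l}\widehat{P_{a_l}}$. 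Feeding this into the definition of $\ast$, each such word carries a single global twist; explicitly
$$[C_{P_a}]\ast[C_{P_b}]=v^{-\lr{\widehat{P_b},\widehat{P_a}}}[C_{P_a}][C_{P_b}],\qquad [C_{P_a}]\ast[C_{P_b}]\ast[C_{P_c}]=v^{-\lr{\widehat{P_b},\widehat{P_a}}-\lr{\widehat{P_c},\widehat{P_a}}-\lr{\widehat{P_c},\widehat{P_b}}}[C_{P_a}][C_{P_b}][C_{P_c}],$$
where the right-hand sides are the untwisted products treated in Proposition~\ref{jbgx}.

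Next I would evaluate the Euler numbers. As $P_a$ is projective, $\lr{\widehat{P_a},\widehat{P_b}}=\dim_k\Hom_A(P_a,P_b)$, so $\lr{\widehat{P_i},\widehat{P_i}}=1$; if $a_{ij}=0$ then $\lr{\widehat{P_i},\widehat{P_j}}=\lr{\widehat{P_j},\widehat{P_i}}=0$, while if $a_{ij}=1$ then $\lr{\widehat{P_i},\widehat{P_j}}=0$ and $\lr{\widehat{P_j},\widehat{P_i}}=1$. Inserting these values into the displayed formulas, one finds in the case $a_{ij}=1$ that the words $[C_{P_j}]^{\ast 2}\ast[C_{P_i}]$, $[C_{P_j}]\ast[C_{P_i}]\ast[C_{P_j}]$, $[C_{P_i}]\ast[C_{P_j}]^{\ast 2}$ are $v^{-1}$, $v^{-2}$, $v^{-3}$ times the corresponding untwisted monomials, and that $[C_{P_i}]^{\ast 2}\ast[C_{P_j}]$, $[C_{P_i}]\ast[C_{P_j}]\ast[C_{P_i}]$, $[C_{P_j}]\ast[C_{P_i}]^{\ast 2}$ are $v^{-3}$, $v^{-2}$, $v^{-1}$ times theirs, respectively.

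Finally I would substitute $q=v^2$ and rescale. Part~(1) is immediate, since both $[C_{P_i}]\ast[C_{P_j}]$ and $[C_{P_j}]\ast[C_{P_i}]$ carry twist $v^{0}=1$ when $a_{ij}=0$, so Proposition~\ref{jbgx}(1) applies verbatim. For part~(2): multiplying the first identity of Proposition~\ref{jbgx}(2) by $v^{-1}$ and using $v^{-1}(q+1)=v+v^{-1}$ gives the first line of $(\ref{gx1})$; multiplying the second by $v^{-2}$ and using $v^{-2}q(q+1)=q+1$ and $v^{-2}q=1$ gives the second line; multiplying the third by $v^{-3}$ and using $v^{-3}q^2(q+1)=v(q+1)$ and $v^{-3}q=v^{-1}$ gives the third line; and the three identities of $(\ref{gx2})$ come out the same way from the corresponding identities of Proposition~\ref{jbgx}(2). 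The one step that genuinely requires an argument — and hence the only real obstacle — is the claim that an iterated twisted product of the $[C_{P_a}]$ is a single fixed power of $v$ times the iterated untwisted product, independently of the intermediate terms that appear; this is exactly what the additivity of $X^\cdot\mapsto\widehat{X_0}$ on conflations, together with the well-definedness of $\lr{\cdot,\cdot}$ on $K(A)$, provide.
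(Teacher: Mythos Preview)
Your proof is correct and follows essentially the same approach as the paper: both reduce the twisted relations to the untwisted ones of Proposition~\ref{jbgx} by observing that an iterated $\ast$-product of the $[C_{P_a}]$ differs from the corresponding untwisted product by a single global power of $v$ determined by Euler forms. Your write-up is in fact more careful than the paper's, which exhibits only the computation $[C_{P_j}]^{\ast2}\ast[C_{P_i}]=v^{-(\lr{P_i,P_j}+\lr{\hat{P}_i+\hat{P}_j,\hat{P}_j})}[C_{P_j}]^{2}[C_{P_i}]=v^{-1}[C_{P_j}]^{2}[C_{P_i}]$ and leaves the rest to ``similarly''; you make explicit the underlying reason (additivity of $X^\cdot\mapsto\widehat{X_0}$ on conflations) that justifies this step for every monomial.
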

\begin{proof}
(1)~$[C_{P_i}]\ast[C_{P_j}]=v^{-\lr{P_j,P_i}}[C_{P_i}][C_{P_j}]=[C_{P_i}][C_{P_j}]$. Similarly, $[C_{P_j}]\ast[C_{P_i}]=[C_{P_j}][C_{P_i}]$;

(2)~$[C_{P_j}]^{\ast2}*[C_{P_i}]=v^{-(\lr{P_i,P_j}+\lr{\hat{P}_i+\hat{P}_j,\hat{P}_j})}[C_{P_j}]^{2}[C_{P_i}]=v^{-1}[C_{P_j}]^{2}[C_{P_i}]$. Similarly, we can prove the other.
\end{proof}

\begin{proposition}\label{qsr}{\rm\textbf{(Quantum Serre relations)}}
For any $1\leq i\neq j\leq n$, we have in $\underline{\H}_{\tw}(C_1(\mathscr{P}))$ that

$(1)$~if $a_{ij}=0$, then $[C_{P_i}]\ast[C_{P_j}]=[C_{P_j}]\ast[C_{P_i}]$;

$(2)$~if $|a_{ij}|=1$, then
\begin{equation}\label{Serre1}[C_{P_i}]\ast[C_{P_j}]^{\ast2}-(v+v^{-1})[C_{P_j}]\ast[C_{P_i}]\ast[C_{P_j}]+[C_{P_j}]^{\ast2}\ast[C_{P_i}]=0\end{equation}
and
\begin{equation}\label{Serre2}[C_{P_j}]\ast[C_{P_i}]^{\ast2}-(v+v^{-1})[C_{P_i}]\ast[C_{P_j}]\ast[C_{P_i}]+[C_{P_i}]^{\ast2}\ast[C_{P_j}]=0.\end{equation}
\end{proposition}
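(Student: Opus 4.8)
The plan is to derive the two quantum Serre relations directly from the explicit multiplication formulas already established in Proposition~\ref{tjbgx} by passing to the quotient $\underline{\H}_{\tw}(C_1(\mathscr{P}))=\H_{\tw}(C_1(\mathscr{P}))/\mathfrak{K}_1$. Part $(1)$ is immediate: when $a_{ij}=0$, Proposition~\ref{tjbgx}$(1)$ gives $[C_{P_i}]\ast[C_{P_j}]=[C_{P_i\oplus P_j}]=[C_{P_j}]\ast[C_{P_i}]$ already in $\H_{\tw}(C_1(\mathscr{P}))$, hence a fortiori in the quotient. For $(2)$, the key observation is that every term of the form $[C_{P_i}\oplus K_{P_j}]$ or $[C_{P_j}\oplus K_{P_i}]$ appearing on the right-hand sides of $(\ref{gx1})$ and $(\ref{gx2})$ lies in the ideal $\mathfrak{K}_1$: indeed $[C_{P_i}\oplus K_{P_j}]$ is a scalar multiple of $[K_{P_j}]\ast[C_{P_i}]$ (using that $[K_{P_j}]$ is central, as noted in the Remark following the definition of $\H_{\tw}(C_1(\mathscr{P}))$), and similarly for $[C_{P_j}\oplus K_{P_i}]$. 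So modulo $\mathfrak{K}_1$ these terms vanish.

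First I would reduce $(2)$ to the case $a_{ij}=1$; the case $a_{ij}=-1$ follows by interchanging the roles of $i$ and $j$, which swaps $(\ref{Serre1})$ and $(\ref{Serre2})$. Then, working in $\underline{\H}_{\tw}(C_1(\mathscr{P}))$, I would simplify the system $(\ref{gx1})$ to
\begin{equation*}
\begin{cases}
[C_{P_j}]^{\ast2}\ast[C_{P_i}]=(v+v^{-1})[C_{P_i\oplus 2P_j}]+(v+v^{-1})[C_{P_j}\oplus C_{M_{ij}}]\\
[C_{P_j}]\ast[C_{P_i}]\ast[C_{P_j}]=(q+1)[C_{P_i\oplus 2P_j}]+[C_{P_j}\oplus C_{M_{ij}}]\\
[C_{P_i}]\ast[C_{P_j}]^{\ast2}=v(q+1)[C_{P_i\oplus 2P_j}],
\end{cases}
\end{equation*}
and analogously simplify $(\ref{gx2})$. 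The relation $(\ref{Serre1})$ then amounts to checking that the $\mathbb{Q}(v)$-linear combination $[C_{P_i}]\ast[C_{P_j}]^{\ast2}-(v+v^{-1})[C_{P_j}]\ast[C_{P_i}]\ast[C_{P_j}]+[C_{P_j}]^{\ast2}\ast[C_{P_i}]$, expressed via the two basis elements $[C_{P_i\oplus 2P_j}]$ and $[C_{P_j}\oplus C_{M_{ij}}]$, has both coefficients equal to zero; this is the short computation $v(q+1)-(v+v^{-1})(q+1)+(v+v^{-1})=0$ for the coefficient of $[C_{P_i\oplus 2P_j}]$ (recall $q=v^2$, so $v(q+1)=v^3+v$, $(v+v^{-1})(q+1)=v^3+2v+v^{-1}$, and $(v+v^{-1})(v^3+v+1-1)$ — one simply verifies the arithmetic) and $0-(v+v^{-1})+(v+v^{-1})=0$ for the coefficient of $[C_{P_j}\oplus C_{M_{ij}}]$. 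The relation $(\ref{Serre2})$ is verified the same way using the simplified $(\ref{gx2})$.

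The only genuine subtlety — and the step I would be most careful about — is not a difficult argument but a linear-independence bookkeeping point: one must know that $[C_{P_i\oplus 2P_j}]$ and $[C_{P_j}\oplus C_{M_{ij}}]$ remain linearly independent in $\underline{\H}_{\tw}(C_1(\mathscr{P}))$, so that vanishing of the combination really forces both coefficients to vanish (and, conversely, that no hidden collapse in the quotient is being exploited). This follows from Theorem~\ref{PBW}: the PBW monomials indexed by isoclasses not involving any $K_{P_i}$ project to a basis of $\underline{\H}(C_1(\mathscr{P}))$ (equivalently of $\underline{\H}_{\tw}(C_1(\mathscr{P}))$, since twisting only rescales), and both $C_{P_i\oplus 2P_j}$ and $C_{P_j}\oplus C_{M_{ij}}$ are such isoclasses. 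With that in hand, the two displayed identities follow by direct substitution, completing the proof.
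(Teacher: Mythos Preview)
Your proof is correct and follows essentially the same route as the paper: reduce to $a_{ij}=1$ by the $i\leftrightarrow j$ symmetry of the two Serre relations, pass to the quotient so that the $[C_{P_i}\oplus K_{P_j}]$ and $[C_{P_j}\oplus K_{P_i}]$ terms from Proposition~\ref{tjbgx}(2) vanish, and read off the identities from the remaining coefficients. One small remark: your ``genuine subtlety'' about linear independence is not actually needed, since you compute each coefficient separately and show it is zero; the conclusion $0\cdot[C_{P_i\oplus 2P_j}]+0\cdot[C_{P_j}\oplus C_{M_{ij}}]=0$ holds regardless of any independence in the quotient.
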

\begin{proof}
$(2)$~We only need to note that $(\ref{Serre1})$ and $(\ref{Serre2})$ are symmetric with respect to $i$ and $j$, then by Proposition \ref{tjbgx}(2), we complete the proof.
\end{proof}

Define the ideal of $\underline{\H}_{\tw}(C_1(\mathscr{P}))$ $$\mathfrak{I}_1:=\lr{[C_{P_i}]\ast[C_{P_j}]-[C_{P_j}]\ast[C_{P_i}]~|~L_{ij}^{\geq2}\neq\emptyset,1\leq i\neq j\leq n}.$$
\begin{theorem}
There exists an epimorphism of algebras
$$\xymatrix{\varphi: \H_{\tw}(A)\ar@{->>}[r]&\underline{\H}_{\tw}(C_1(\mathscr{P}))/\mathfrak{I}_1
}$$ defined by $[S_i]\mapsto [C_{P_i}]$.
\end{theorem}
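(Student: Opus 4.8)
The plan is to construct $\varphi$ as the composite of the known presentation of $\H_{\tw}(A)$ with the relations established above. First I would recall from Theorem \ref{lzq} (equivalently \cite{R90a,R92a}) that $\H_{\tw}(A)$ is generated by $[S_1],\dots,[S_n]$ subject precisely to the quantum Serre relations: for $a_{ij}=0$, $[S_i]\ast[S_j]=[S_j]\ast[S_i]$; for $|a_{ij}|=1$, $[S_i]\ast[S_j]^{\ast 2}-(v+v^{-1})[S_j]\ast[S_i]\ast[S_j]+[S_j]^{\ast 2}\ast[S_i]=0$ and the symmetric relation. So to get a well-defined algebra homomorphism $\varphi$ with $[S_i]\mapsto [C_{P_i}]$, it suffices to check that the images $[C_{P_i}]$ in $\underline{\H}_{\tw}(C_1(\mathscr{P}))/\mathfrak{I}_1$ satisfy these same defining relations.

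The verification of the relations splits by the value of $a_{ij}$. When $a_{ij}=0$ and there is no path between $i$ and $j$ at all, Proposition \ref{qsr}(1) already gives $[C_{P_i}]\ast[C_{P_j}]=[C_{P_j}]\ast[C_{P_i}]$ in $\underline{\H}_{\tw}(C_1(\mathscr{P}))$, hence a fortiori in the further quotient. When $a_{ij}=0$ but there is a path of length $\geq 2$ between $i$ and $j$ (so $L_{ij}^{\geq 2}\neq\emptyset$), the commutator $[C_{P_i}]\ast[C_{P_j}]-[C_{P_j}]\ast[C_{P_i}]$ is by definition a generator of $\mathfrak{I}_1$, so it vanishes in the quotient $\underline{\H}_{\tw}(C_1(\mathscr{P}))/\mathfrak{I}_1$; this is exactly the reason $\mathfrak{I}_1$ was introduced. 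When $|a_{ij}|=1$, the two quantum Serre relations are precisely the content of Proposition \ref{qsr}(2), which holds already in $\underline{\H}_{\tw}(C_1(\mathscr{P}))$. Thus all defining relations of $\H_{\tw}(A)$ are satisfied by the $[C_{P_i}]$, and the universal property of the presentation yields a well-defined algebra homomorphism $\varphi$.

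For surjectivity I would argue as follows: $\underline{\H}_{\tw}(C_1(\mathscr{P}))/\mathfrak{I}_1$ is, as an algebra, generated by the images of $[C_{P_i}]$ and $[K_{P_i}]$ by Theorem \ref{main result} (the twisting does not change the generating set since the twist is by an invertible scalar), but in the quotient $\underline{\H}_{\tw}(C_1(\mathscr{P}))=\H_{\tw}(C_1(\mathscr{P}))/\mathfrak{K}_1$ every $[K_{P_i}]$ is zero by construction of $\mathfrak{K}_1$. Hence $\underline{\H}_{\tw}(C_1(\mathscr{P}))/\mathfrak{I}_1$ is generated by the images of the $[C_{P_i}]$ alone, all of which lie in the image of $\varphi$. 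Therefore $\varphi$ is an epimorphism. The one point requiring a little care — the main (if modest) obstacle — is confirming that the presentation of $\H_{\tw}(A)$ by generators and Serre relations is genuinely a presentation (not merely that these relations hold), so that the universal-property argument for well-definedness of $\varphi$ is legitimate; this is classical (Ringel, Green), and once invoked the rest is immediate from Propositions \ref{qsr} and \ref{tjbgx} and Theorem \ref{main result}.
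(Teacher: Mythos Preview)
Your proof is correct and follows essentially the same route as the paper's, which is even terser: it simply notes that $\H_{\tw}(A)$ is presented by the $[S_i]$ and the quantum Serre relations, then cites Proposition~\ref{qsr} and Theorem~\ref{main result}. One small slip to fix: in this paper $a_{ij}$ is the \emph{path} matrix entry, so ``$a_{ij}=0$ but there is a path of length $\geq 2$'' is self-contradictory; the case you mean there is that $i$ and $j$ are not adjacent in the underlying Dynkin graph yet $|a_{ij}|=1$ (a path of length $\geq 2$ exists), and then the required commutativity of $[C_{P_i}]$ and $[C_{P_j}]$ is supplied precisely by $\mathfrak{I}_1$, exactly as you argue.
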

\begin{proof}
We only need to note that $\H_{\tw}(A)$ is generated by all $[S_i]$ and the quantum Serre relations. Then combining Proposition $\ref{qsr}$ with Theorem $\ref{main result}$, we complete the proof.
\end{proof}

\begin{corollary}\label{hgx}
Let $Q$ be a bipartite Dynkin quiver. Then there exists an isomorphism of algebras $\psi: \H_{\tw}(A)\rightarrow \underline{\H}_{\tw}(C_1(\mathscr{P}))$ defined by $[S_i]\mapsto [C_{P_i}]$.
\end{corollary}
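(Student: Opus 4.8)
The strategy is to combine the epimorphism $\psi$ already produced by the preceding theorem with a dimension-count argument showing it cannot have a kernel. First I would observe that since $Q$ is bipartite, $L_{ij}^{\geq 2}=\emptyset$ for all $i\neq j$, so $\mathfrak{I}_1=0$ and the epimorphism $\varphi$ of the previous theorem becomes an epimorphism $\psi\colon\H_{\tw}(A)\twoheadrightarrow\underline{\H}_{\tw}(C_1(\mathscr{P}))$ with $[S_i]\mapsto[C_{P_i}]$; this is exactly the map named in the statement. It remains to prove injectivity, equivalently that the two algebras have the ``same size'' in a suitable graded sense.

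The key step is to set up a grading under which $\psi$ is a graded map and the graded pieces are finite-dimensional, then compare dimensions. I would use the $\mathbb{N}^n$-grading (or filtration) by $\Dim H_0(-)$ on $\H_{\tw}(C_1(\mathscr{P}))$ coming from Section 3: a non-projective indecomposable $C_M$ sits in degree $\Dim M$, while each $K_{P_i}$ sits in degree $0$, so modding out by $\mathfrak{K}_1$ exactly kills the degree-$0$ ``Cartan-like'' part and $\underline{\H}_{\tw}(C_1(\mathscr{P}))$ inherits an $\mathbb{N}^n$-grading whose basis, by Theorem \ref{PBW}, consists of ordered monomials in the $[C_M]$ with $M$ running over the indecomposable non-projective $A$-modules. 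On the other side, $\H_{\tw}(A)$ carries the usual $\mathbb{N}^n$-grading by dimension vector, with basis the $[M]$, $M\in\mod A$; since $Q$ is bipartite, the Coxeter-type correspondence of \cite{RSZ} (used already in the previous corollary via the isomorphism $\gamma\colon\mathfrak{n}^+\to\tilde{\mathfrak{n}}^+$) gives a bijection $\Phi^+\to\{\text{indec.\ non-proj.\ }A\text{-modules}\}$ which I would check is \emph{degree-preserving} for this grading — this is the crux. Granting that, the graded dimensions of $\H_{\tw}(A)$ and of $\underline{\H}_{\tw}(C_1(\mathscr{P}))$ agree in every degree $\alpha\in\mathbb{N}^n$ (both count $\mathbb{N}$-valued functions on a set in bijection with $\Phi^+$, weighted by the matching roots), so the surjection $\psi$ between finite-dimensional spaces in each degree is forced to be an isomorphism.

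Alternatively — and this is cleaner, since the degree-bookkeeping of the root bijection is precisely the subtle point — I would factor $\psi$ through the chain of isomorphisms already assembled in the proof of the preceding corollary: $\H_{\tw}(A)\xrightarrow{R^{-1}}U_v(\mathfrak{n}^+)$ by Theorem \ref{lzq}, then the quantized analogue of $\tilde\gamma$ identifying $U_v(\mathfrak{n}^+)$ with a twisted version of $U(\tilde{\mathfrak{n}}^+)$, then Theorem \ref{PBW} and the remark that $[K_P]$ lies in the center of $\H_{\tw}(C_1(\mathscr{P}))$ to identify the latter with $\underline{\H}_{\tw}(C_1(\mathscr{P}))$; one checks each composite sends $[S_i]\mapsto E_i\mapsto[C_{P_i}]$, matching $\psi$ on generators, and since $\H_{\tw}(A)$ is generated by the $[S_i]$ the two maps coincide, whence $\psi$ is an isomorphism.

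\textbf{Main obstacle.} The delicate part is verifying that the bijection between positive roots and indecomposable non-projective $A$-modules (equivalently, between indecomposable $A$-modules and indecomposable non-projective objects $C_M$ of $C_1(\mathscr{P})$) respects the relevant $\mathbb{N}^n$-grading: on the module side the grading is by $\Dim M$, but on the complex side the natural grading is by $\Dim H_0(C_M)$, and one must confront the fact that for $M$ projective $H_0(C_M)=0$ while for $M$ non-projective $\Dim H_0(C_M)=\Dim M$. Disentangling this — showing the projective modules are exactly what gets ``absorbed'' into $\mathfrak{K}_1$ and that the Euler-form twist is compatible with the passage — is where the bipartite hypothesis does real work (it makes $\mathfrak{I}_1$ vanish and makes $\tilde{\mathfrak{n}}^+\cong\mathfrak{n}^+$), and is the step I would write out most carefully.
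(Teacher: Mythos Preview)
Your proposal differs substantially from the paper's one-line argument: the paper simply writes down a linear map $\psi'\colon\underline{\H}_{\tw}(C_1(\mathscr{P}))\to\H_{\tw}(A)$, $[C_M]\mapsto[\top M]$, and asserts $\psi'\psi=1$ (since $\top P_i=S_i$), hence $\psi$ is injective. No dimension count, no root bijection, no quantized $\tilde\gamma$.

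More importantly, your plan rests on a conflation of two meanings of ``non-projective''. You claim that after killing $\mathfrak{K}_1$ the PBW generators of the quotient are the $[C_M]$ with $M$ an indecomposable \emph{non-projective $A$-module}, and you then look for a bijection $\Phi^+\to\{\text{indec.\ non-proj.\ }A\text{-modules}\}$. But by Lemma~\ref{indec. obj.s} the projective objects of $C_1(\mathscr{P})$ are the $K_P$, not the $C_P$; every $C_M$, including each $C_{P_i}$, is non-projective in $C_1(\mathscr{P})$. Modding out $\mathfrak{K}_1$ removes exactly the $[K_P]$, leaving a basis $\{[C_M]:M\in\mod A\}$ indexed by \emph{all} of $\mod A$, already in bijection with $\Phi^+$ via Gabriel, and with $\Dim H_0(C_M)=\Dim M$ trivially. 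So your ``main obstacle'' is not the obstacle. The real difficulty, which you do not address, is that $\psi$ sends $[S_i]\mapsto[C_{P_i}]$ rather than $[M]\mapsto[C_M]$, so this bijection of bases says nothing about $\ker\psi$; moreover $\Dim H_0$ gives only a \emph{filtration} on $\H_{\tw}(C_1(\mathscr{P}))$ (cf.\ the proof of Theorem~\ref{PBW}), and $\psi$ does not respect it --- for a source vertex $i$ one has $\Dim H_0(C_{P_i})=\Dim P_i\neq\alpha_i$. Your alternative route via a ``quantized analogue of $\tilde\gamma$'' assumes an isomorphism between $U_v(\mathfrak{n}^+)$ and a twisted version of $U(\tilde{\mathfrak{n}}^+)$ that the paper never constructs; establishing it is essentially equivalent to the corollary itself.
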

\begin{proof}
Consider
$$\psi':\underline{\H}_{\tw}(C_1(\mathscr{P}))\rightarrow\H_{\tw}(A), [C_M]\mapsto[\top M],$$ then it is easy to see that $\psi'\psi=1$, thus $\psi$ is injective.
\end{proof}

Combining Corollary \ref{hgx} with Theorem \ref{lzq}, we obtain the following
\begin{corollary}
Let $Q$ be a bipartite Dynkin quiver. Then there exists an isomorphism of algebras $\tilde{\psi}: U_v(\mathfrak{n}^+)\rightarrow \underline{\H}_{\tw}(C_1(\mathscr{P}))$ defined by $E_i\mapsto [C_{P_i}]$.
\end{corollary}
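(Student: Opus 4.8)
The plan is to realize $\tilde\psi$ as the composite of the two isomorphisms already at our disposal, namely $R$ from Theorem~\ref{lzq} and $\psi$ from Corollary~\ref{hgx}. First I would recall that $R\colon U_v(\mathfrak n^+)\to\H_{\tw}(A)$ is an isomorphism of algebras with $R(E_i)=[S_i]$ for all $1\leq i\leq n$, and that, since $Q$ is a bipartite Dynkin quiver, $\psi\colon\H_{\tw}(A)\to\underline{\H}_{\tw}(C_1(\mathscr{P}))$ is an isomorphism of algebras with $\psi([S_i])=[C_{P_i}]$. Setting $\tilde\psi:=\psi\circ R$, one immediately obtains that $\tilde\psi$ is a composition of algebra isomorphisms, hence itself an isomorphism of algebras, and $\tilde\psi(E_i)=\psi(R(E_i))=\psi([S_i])=[C_{P_i}]$, which is exactly the stated formula on generators.

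Since both ingredients are fully established earlier in the paper, there is essentially no obstacle here; the only point to keep in mind is that the target of $\psi$ in Corollary~\ref{hgx} is precisely the quotient $\underline{\H}_{\tw}(C_1(\mathscr{P}))=\H_{\tw}(C_1(\mathscr{P}))/\mathfrak{K}_1$, which coincides with the algebra appearing in the statement of the corollary, so no further identification of the codomain is required. I would also remark in passing that in the bipartite case $L_{ij}^{\geq2}=\emptyset$ for all $1\leq i\neq j\leq n$, so that $\mathfrak{I}_1=0$ and the epimorphism $\varphi$ of the preceding theorem is in fact the isomorphism $\psi$; this is implicit in the proof of Corollary~\ref{hgx} via the retraction $\psi'$, and explains why no extra relations beyond the quantum Serre relations survive.

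If a self-contained argument were preferred over the one-line composition, I would instead define $\tilde\psi$ directly on generators by $E_i\mapsto[C_{P_i}]$, verify via Proposition~\ref{qsr} that the defining quantum Serre relations of $U_v(\mathfrak n^+)$ are respected in $\underline{\H}_{\tw}(C_1(\mathscr{P}))$ (using that $\mathfrak{I}_1=0$ for bipartite $Q$), deduce surjectivity from Theorem~\ref{main result}, and obtain injectivity from the existence of the retraction $\psi'\colon\underline{\H}_{\tw}(C_1(\mathscr{P}))\to\H_{\tw}(A)$, $[C_M]\mapsto[\top M]$, combined with Theorem~\ref{lzq}. Either way the content is immediate; the composite description is the cleanest and is the one I would write up.
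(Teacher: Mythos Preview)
Your proposal is correct and matches the paper's approach exactly: the paper simply states that the corollary follows by combining Corollary~\ref{hgx} with Theorem~\ref{lzq}, which is precisely the composite $\tilde\psi=\psi\circ R$ you describe. Your additional remarks about $\mathfrak{I}_1=0$ in the bipartite case and the alternative direct argument are accurate elaborations but go beyond what the paper itself records.
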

\section*{Acknowledgments}

The author is grateful to Qinghua Chen, Shiquan Ruan and Jie Sheng for their stimulating discussions and valuable comments. He also would like to thank Professor Bangming Deng for his encouragement and help. After publishing the paper on arXiv, the author is pleasantly informed that Ming Lu and Weiqiang Wang have also some similar results under different framework in their forthcoming paper.

\end{document}